\providecommand{\examplename}{Example}
\newtheorem{theorem}{Theorem}[section]
\newtheorem{proposition}[theorem]{Proposition}
\newtheorem{corollary}[theorem]{Corollary}
\newtheorem{lemma}[theorem]{Lemma}
\theoremstyle{remark}
\newenvironment{remark}
    {\pushQED{\qed}\remarkx}
    {\popQED\endremarkx}
\theoremstyle{definition}
\newtheorem*{example*}{\protect\examplename}
\newenvironment{example}
    {\pushQED{\qed}\examplex}
    {\popQED\endexamplex}
\theoremstyle{definition}
\newtheorem*{assumption*}{Assumption}
\newtheorem{defin}[theorem]{Definition}
\pgfplotsset{compat=1.16}
\appto{\bibsetup}{\sloppy}
\newcommand\ve{\varepsilon}
\newcommand\N{\mathbb{N}}
\newcommand\Z{\mathbb{Z}}
\newcommand\calQ{\mathcal{Q}}
\newcommand\X{\bm{\mathcal{X}}}
\newcommand\Y{\bm{\mathcal{Y}}}
\newcommand{\V}{\bm{\mathcal{V}}}
\newcommand\ZZ{\bm{\mathcal{Z}}}
\newcommand\Sp{\mathbb{S}}
\newcommand\scrL{\mathscr{L}}
\newcommand{\mW}{\mathcal{W}}
\newcommand\R{\mathbb{R}}
\newcommand\E{\mathds{E}}
\newcommand\e{\mathord{\mathrm{e}}}
\newcommand\p{\mathds{P}}
\newcommand\1{\mathds{1}}
\newcommand\Oh{\mathcal{O}}
\newcommand\oh{\mathrm{o}}
\newcommand{\Borel}{\mathcal{B}}
\newcommand\SV{\mathrm{SV}}
\newcommand\CSV{\mathrm{CSV}}
\newcommand\GSV{\mathrm{GSV}}
\newcommand{\Leb}{\text{\normalfont Leb}}
\newcommand\da{\downarrow}
\newcommand\ua{\uparrow}
\newcommand\la{\leftarrow}
\renewcommand\le{\leqslant}
\renewcommand\ge{\geqslant}
\newcommand\eqd{\overset{d}{=}}
\newcommand\cid{\xrightarrow{d}}
\newcommand\cip{\xrightarrow{\p}}
\newcommand\co{\mathsf{c}}
\newcommand\nf[1]{\normalfont{#1}}
\newcommand{\D}{\mathrm{d}}
\newcommand{\ov}[1]{\overline{#1}}
\newcommand{\wt}[1]{\widetilde{#1}}
\newcommand{\wh}[1]{\widehat{#1}}
\newcommand{\mft}{{\mathfrak{t}}}
\newcommand{\mfc}{{\mathfrak{c}}}
\newcommand{\Lloc}{L_{\mathrm{loc}}^1}
\newcommand{\tra}{{\scalebox{0.6}{$\top$}}}
\newcolumntype{C}{>{\centering\arraybackslash}m{1.7cm}}
\newcolumntype{T}{>{\centering\arraybackslash}m{4.7cm}}
\newcolumntype{L}{>{\centering\arraybackslash}m{7cm}}
\newcommand{\ceil}[1]{\lceil#1\rceil}
\newcommand\jorge[1]{{\color{red}#1}}
\title[Rates of convergence for multivariate SDEs driven by L\'evy processes in small-time]{Rates of convergence for multivariate SDEs driven by L\'evy processes in the small-time stable domain of attraction}
\author{Jorge Gonz\'alez C\'azares$^{*}$ \& David Kramer-Bang$^{\dag}$}
\address{$^{*}$ IIMAS--UNAM, Mexico \and $^{\dag}$Department of Mathematics, Aarhus University, DK}
\email{jorge.gonzalez@sigma.iimas.unam.mx}
\email{bang@math.au.dk}
\begin{document}

\begin{abstract}
This paper explores the rates of convergence of solutions for multivariate stochastic differential equations (SDEs) driven by L\'evy processes within the small-time stable domain of attraction (DoA). Explicit bounds are derived for the uniform Wasserstein distance between solutions of two Lévy-driven SDEs, expressed in terms of driver characteristics. These bounds establish convergence rates in probability for drivers in the DoA, and yield uniform Wasserstein distance convergence for SDEs with additive noise. The methodology uses two couplings for Lévy driver jump components, leading to sharp convergence rates tied to the processes' intrinsic properties.
\end{abstract}

\subjclass[2020]{Primary: 37H10, 60F05, 60G51. Secondary: 60F25}

\keywords{L\'evy driven SDEs, Stable processes, Small-time, Tightness, Stable domain of attraction, Couplings}

\maketitle

\section{Introduction}

Lévy-driven SDEs are essential for modelling systems with jumps or discontinuities, features common in various real-world phenomena. In finance, asset prices exhibit sudden shifts due to market shocks~\cite{tankov2015financial}; in physics, particle trajectories may experience abrupt changes, leading to anomalous diffusion~\cite{MR1809268}, and, moreover, thermalisation and cut-off phenomena in systems modelling energy transport and turbulence~\cite{MR4307706,MR4791608}; in ecology, population dynamics can undergo rapid alterations from environmental events~\cite{LevinEco}; and in climate science, fast non-Gaussian transition behaviours are often attributed to jumps~\cite{climate}. Their ability to capture jump behaviour makes Lévy-driven SDEs crucial for accurate modelling and prediction in these fields.

The small-time behaviour of such models is particularly significant, as it unveils intricate properties of the underlying processes that are obscured over larger time scales. Many L\'evy processes, especially those used in practice, lie in the domain of attraction (DoA) of stable processes, converging to a stable process under appropriate scaling as time approaches zero. This convergence enables approximating complex systems with simpler, well-studied stable processes, improving both theoretical analysis and practical computation.

In this paper, we study the rates of convergence for multivariate SDEs driven by L\'evy processes within the small-time stable DoA. Explicit bounds are established on the Wasserstein distance between solutions of Lévy-driven SDEs. In the DoA case, tightness is demonstrated in the appropriately scaled supremum norm between SDEs driven by the limiting stable process and those within the DoA, using the thinning and comonotonic couplings. These methods effectively address the jump components, providing sharp convergence rates related to driver characteristics.

Our contributions include:
\begin{itemize}
\item Broad bounds are derived for the uniform $L^p$-Wasserstein distance between SDE solutions under both couplings, applicable to a wide class of Lévy drivers.
\item For drivers in the domains of normal attraction (DoNA) or non-normal attraction (DoNNA) of stable processes and satisfying further regularity conditions, convergence in probability guarantees are established for the SDE solutions, exploiting the structure of the L\'evy measures and the SDE.
\item For additive SDEs, we derive bounds in $L^p$-Wasserstein distances, yielding a convergence rate that matches that of the rate of DoA between the drivers.
\end{itemize}

This work builds on a rich literature on L\'evy-driven SDEs, including studies on SDE stability~\cite{MR2001996,MR2160585}, convergence rates~\cite{MR3332269} and approximation methods~\cite{pavlyukevich2025stronguniformwongzakaiapproximations,kosenkova2020orderconvergenceweakwongzakai,MR2802466,MR2759203}. Recent efforts have quantified distances between solutions driven by different L\'evy processes, notably in stable approximations of SDEs with additive noise used in climate science, see~\cite{MR3332269} and the references therein. Our research advances this field by providing explicit, computable rates in the small-time regime, a context critical for applications requiring high-frequency or short-term predictions.

The paper is structured as follows: Section~\ref{sec:framework} outlines preliminaries on L\'evy processes, stable domains, and the SDE framework. Section~\ref{sec:main} presents our main convergence rate results. Illustrations, an outline of the proofs, a literature review and a general discussion non-viable approaches are provided in Subsections~\ref{sec:illustrations},~\ref{subsec:outline},~\ref{subsec:literature} and~\ref{sec:failed_ideas}, respectively. Section~\ref{sec:special} contains useful corollaries and stronger results obtained under additional structural assumptions. In particular, we study the Wasserstein distance for SDEs with additive noise in Section~\ref{subsec:additive}, and general SDEs driven by augmented stable processes in Section~\ref{subsec:augment_stable}. Section~\ref{sec:general_bounds} develops the two coupling techniques employed in this paper and provides general bounds on Wasserstein distances between solutions of SDEs driven by arbitrary L\'evy processes. In Section~\ref{sec:application_DoA}, we present proofs of the main results. Finally, Section~\ref{sec:conclusion} offers concluding remarks and comments on open problems.

\section{Preliminaries and Notation}
\label{sec:framework}

\subsection{Notation} 
\label{subsec:notation}

The following standard notation and conventions are used throughout.

\begin{itemize}[leftmargin=1em]
\item Let $\R_+\coloneqq[0,\infty)$ denote the non-negative real numbers, $\R^d_{\bm0}\coloneqq\R^d\setminus\{\bm{0}\}$ and $\lceil x\rceil\coloneqq\inf\{k\in\Z:k\ge x\}$ denote the smallest integer that is larger or equal to $x$.
\item For functions $f\ge 0$ and $g>0$, we say that $f(t) \lesssim g(t)$ or $f(t)=\Oh(g(t))$ (resp. $f(t)=\oh(g(t))$; $f(t)\sim g(t)$) as $t\da 0$ if $\limsup_{t\da 0}f(t)/g(t)<\infty$ (resp. $\limsup_{t\da 0}f(t)/g(t)=0$; $\lim_{t\da 0}f(t)/g(t)=1$). 
\item Given $\bm{f}:[a,b]\mapsto\R^d$, we denote by $\|\bm{f}\|_{[a,b]}\coloneqq\sup_{t\in[a,b]}|\bm{f}(t)|$ its supremum norm on $[a,b]$, where $|\cdot|$ denotes the Euclidean norm in $\R^d$ and $\R^{d\times m}$ (also known as the Frobenius norm), i.e. $|\bm{x}|^2=\sum_{i=1}^d x_i^2$ for $\bm{x} \in \R^d$ and $|\bm{A}|^2=\sum_{i=1}^d \sum_{j=1}^m a_{i,j}^2$ for $\bm{A} \in \R^{d\times m}$. 
\item A function $\ell$ is \emph{slowly varying} at $a\in[0,\infty]$, denoted $\ell \in \SV_a$, if $\ell$ is measurable, positive in a neighbourhood of $a$ and $\lim_{x \to a} \ell(cx)/\ell(x)=1$ for all $c\in(0,\infty)$ (typically, $a\in\{0,\infty\}$). Moreover, we say $\ell$ is \emph{genuinely slowly varying} at $a$, denoted $\ell\in\GSV_a$, if $\ell\in\SV_a$ has no positive finite limit.
\item A function $\bm{f}:(0,\infty)\to\R^d$ is \emph{locally integrable} on $(0,\infty)$, denoted $\bm{f}\in \Lloc$, if $\bm{f}$ is measurable and $\int_a^b |\bm{f}(x)|\D x<\infty$ for all $0<a<b<\infty$.
\item A family of random elements $\{\bm{\xi}(t)\}_{t\in\mathcal{T}}$ on a normed space with norm $\|\cdot\|$, is said to be tight, if $\lim_{r \to \infty} \sup_{t \in \mathcal{T}}\p(\|\bm{\xi}(t)\|>r)=0$. 
\end{itemize}

Finally, define the class of functions with \emph{controlled slow variation}: $\ell\in\GSV_0(\phi_1,\phi_2)$ if $\ell\in\SV_0$, $\ell(t)=1$ for $t\ge 1$ and the controlling functions satisfy $\phi_1\in\SV_0\cap\SV_\infty\cap \Lloc$, $\phi_2:(0,\infty)\to[0,1]$, $\phi_2(t)\to 0$ as $t \da 0$, and
\[
\left|\ell(xt)/\ell(t)-1 \right|
\le \phi_1(x)\phi_2(t), 
\quad \text{ for all }x>0
\text{ and all }t\in (0,1].
%\text{ (resp. }t\in[1,\infty)).
\] 

\begin{remark}
\label{rem:CSV}
(i) If $\ell\in\CSV_0(\phi_1,\phi_2)$, then 
\[
\sup_{t\in(0,1]}|\ell(xt)/\ell(t)|
\le 1+\phi_1(x),
\quad \text{for all }x>0.
\]
(ii) Suppose $\ell\in\CSV_0(\phi_1,\phi_2)$ does not have a positive finite limit at $0$. Then~\cite[Lem.~7.2(a)]{WassersteinPaper} implies that $\int_0^1 \sup_{s\in[0,t]}\phi_2(s)\, t^{-1}\D t=\infty$. Thus, if $\phi_2$ is monotone (non-decreasing), then $\phi_2(t)$ is ``slowly'' converging to $0$ as $t\da 0$. In fact, by~\cite[Lem.~7.1]{WassersteinPaper}, if $\ell$ is differentiable and $t|\ell'(t)|$ is eventually positive and slowly varying at $0$, then $t|\ell'(t)|/\ell(t)\lesssim \phi_2(t)$ (and often $t|\ell'(t)|/\ell(t)\sim \phi_2(t)$, see~\cite[Lem.~6.1--6.3]{WassersteinPaper}) as $t\da 0$, which is a more concrete description of the ``slowness'' of $\phi_2$.\\
(iii) The tightness of some family $\{\bm{\xi}(t)\}_{t\in(0,1]}$ implies the equivalent limits %$\xi(t)=\Oh_\p(1)$ %and hence $\E[\min\{\xi(t),1\}]=\Oh(1)$ 
%as $t \da 0$ or, equivalently, 
$\E[\min\{\phi(t)\|\bm{\xi}(t)\|,1\}]\to 0$ and $\phi(t)\bm{\xi}(t)\cip \bm{0}$ as $t\da 0$ whenever $\phi(t)\to 0$ (see, e.g.~\cite[Lem.~5.2~\&~5.9]{MR4226142}). By virtue of this relationship, our results below describe the rate of uniform convergence in probability.
\end{remark}

\begin{comment}
    \jorge{
I think we should have $\X_t=(\X_s^t)_{s\in[0,1]}\coloneqq (\X_{st}/g(t))_{s\in[0,1]}$
\begin{align*}
    \X_s^t 
&= \int_0^{st}V(\X_{r-})\D\bm{X}_r/g(t)
= \int_0^{s}V(\bm{x}+g(t)\X_{r-}^t)\D\bm{X}_r^t
= \int_0^{s}V_t(\X_{r-}^t)\D\bm{X}_r^t, \text{ and }\\
\ZZ_s&\coloneqq \int_0^s V(\ZZ_{r-})\D \bm{Z}_r
\end{align*}
}
\end{comment}
\subsection{Preliminaries on L\'evy Processes}\label{sec:prelimi_SDEs_stable}
Fix an arbitrary $T>0$ and let $\bm{X}=(\bm{X}(t))_{t \in [0,T]}$ be a L\'evy process on $\R^d$ (see~\cite[Def.~1.6, Ch.~1]{MR3185174}) and let $\bm{Z}=(\bm{Z}(t))_{t\in [0,T]}$ be an $\alpha$-stable process on $\R^d$ with $\alpha\in (0,2)\setminus\{1\}$ (see~\cite[Sec.~1.2]{MR3808900}). The process $\bm{X}$ is in the small-time DoA of $\bm{Z}$ if $\bm{X}(t)/g(t)\cid \bm{Z}(1)$ as $t \da 0$ for a normalising function $g(t)=t^{1/\alpha}G(t)$ and a function $G\in \SV_0$ (see~\cite[Eq.~(8)]{MR3784492}). The process $\bm{X}$ is in the domain of normal attraction (DoNA) when $G$ has a positive finite limit at $0$ (see~\cite[p.~181]{MR0233400}); otherwise, $\bm{X}$ is in the domain of non-normal attraction (DoNNA). See Theorem~\ref{thm:small_time_domain_stable} below for a precise description of this class. Throughout, for every $t\in (0,1]$, $\bm{X}_t=(\bm{X}_t(s))_{s\in[0,T]}$ denotes a L\'evy process with the same law as $(\bm{X}(st)/g(t))_{s\in[0,T]}$. 

We denote the generating triplet (see~\cite[Def.~8.2]{MR3185174} for a definition) of the $\alpha$-stable L\'evy process $\bm{Z}$ by $(\bm{\gamma_Z},\bm{\Sigma_Z}\bm{\Sigma_Z}^\tra,\nu_{\bm{Z}})$ (corresponding to the cutoff function $\bm{w}\mapsto\1_{D}(\bm{w})$, where $D\coloneqq B_{\bm{0}}(1)$ and $B_{\bm{0}}(r)\coloneqq \{x \in \R^d: |\bm{x}|<r\}$ is the open ball of radius $r>0$). Thus, $\bm{Z}$ has no Gaussian component (i.e. $\bm{\Sigma}_{\bm{Z}}=\bm{0}$) and its L\'evy measure is given by
\begin{equation}\label{eq:mu_measure_defn}
\nu_{\bm{Z}}(A)
\coloneqq c_\alpha\int_{\Sp^{d-1}}\int_0^\infty\1_{A}(x\bm{v})\frac{\D x}{x^{\alpha+1}}\sigma(\D\bm{v}), 
\qquad \text{for }A\in\mathcal{B}(\R^d_{\bm{0}}),
\end{equation}
where $\sigma$ is a probability measure on the unit sphere $\Sp^{d-1}\subset\R^d$. Additionally, if $\alpha\in(0,1)$, the process has zero natural drift (equivalently, %$\bm{b}_{\bm{Z}}=\bm{0}$ or, equivalently, 
$\bm{\gamma}_{\bm{Z}}=\int_D\bm{w}\,\nu_{\bm{Z}}(\D\bm{w})$), and if $\alpha\in(1,2)$, the process has zero mean (equivalently, %$\bm{a}_{\bm{Z}}=\bm{0}$ or, equivalently, 
$\bm{\gamma}_{\bm{Z}}=-\int_{D^\co}\bm{w}\,\nu_{\bm{Z}}(\D\bm{w})$). We comment that the cases $\alpha\in\{1,2\}$ were excluded in the present work to keep this paper on the shorter end (see Section~\ref{sec:conclusion} below for more details).

\section{The Main Results}
\label{sec:main}

In this section, we study the rate of convergence for multivariate SDEs driven by L\'evy processes in the small-time stable DoA. We give explicit convergence rates for convergence in probability between solutions of SDEs driven by such L\'evy processes and their corresponding limiting stable processes. This is achieved by employing the \emph{thinning} (Subsection~\ref{subsec:thinning}) and \emph{comonotonic} (Subsection~\ref{subsec:como}) couplings. We start with two subsections, dedicated to each of the coupling methods and the resulting convergence rates under appropriate assumptions. 

Consider a Lipschitz continuous function $V:\R^m \to \R^{m\times d}$ with Lipschitz constant $K \in (0,\infty)$, such that $|V(\bm{y})-V(\bm{z})|\le K|\bm{y}-\bm{z}|$ for any $\bm{y},\bm{z}\in\R^{m}$, where $|\cdot|$ is the appropriate Euclidean norm. We consider the solutions $\ZZ=(\ZZ(s))_{s\in[0,T]}$ and $\X_t=(\X_t(s))_{s\in[0,T]}$ (for $t\in(0,1]$) to the following SDEs (unique solutions exist, by~\cite[Thm~6.2.3]{Applebaum_2009}) with initial value $\bm{x}\in\R^m$: 
\begin{equation}
\label{eq:main_SDE_setting}
\X_t(s)\coloneqq \bm{x}+\int_0^s V(\X_t(r-))\D \bm{X}_t(r), 
\quad %\david{(\X_s^t\coloneqq \bm{x}+\int_0^s V(\X_{r-}^t)\D r+\bm{X}_s^t)}\\
\ZZ(s)\coloneqq \bm{x}+\int_0^s V(\ZZ(r-))\D \bm{Z}(r),% \qquad \david{(\ZZ_t\coloneqq \bm{x}+\int_0^t V(\ZZ_{r-})\D r+\bm{Z}_t)}
\quad\text{for all }s \in [0,T].
\end{equation} 

\subsection{The Thinning Coupling}
\label{subsec:thinning} 
In the present subsection, the thinning coupling, introduced in~\cite[Sec.~3.1]{WassersteinPaper}, and detailed in Subsection~\ref{sec_thinning_coup}, is examined. To state the results, we require only the technical Assumption~(\nameref{asm:T}), presented below. This assumption is a generalisation of~\cite[Asm~(T)]{WassersteinPaper}, extending its applicability beyond DoNA to include DoNNA.
\begin{assumption*}[T]
\label{asm:T}
Let $\bm{Z}$ be an $\alpha$-stable process with $\alpha\in(0,2)$ and $\bm{X}$ be a L\'evy process in the small-time DoA of $\bm{Z}$ with generating triplet $(\bm{\gamma}_{\bm{X}},\bm{0},\nu_{\bm{X}})$ (in the notation of~\cite[Def.~8.2]{MR3185174}), where the L\'evy measure admits a decomposition $\nu_{\bm{X}}=\nu_{\bm{X}^\co}+\nu_{\bm{X}^\D}$ satisfying: $\nu_{\bm{X}^\D}$ is arbitrary with finite mass $m_0 \coloneqq \nu_{\bm{X}^\D}(\R^d_{\bm{0}})<\infty$ and
\begin{equation}
\label{eq:TNN}
\nu_{\bm{X}^\co}(\D \bm{w})=H(|\bm w|)h(\bm{w})\nu_{\bm{Z}}(\D \bm{w})
\quad\&\quad
|1-h(\bm{w})|\le K_h(1\wedge |\bm{w}|^p),\quad\text{for all}\quad\bm{w} \in \R^d_{\bm{0}},
\end{equation}
constants $K_h\in[0,\infty)$, $p\in(0,\infty)$, measurable functions $h,H_1,H_2\ge 0$ and $H\in\CSV_0(H_1,H_2)$ such that $t\mapsto tH(t)^{-1/\alpha}$ is strictly increasing. 
%We denote by $\bm{\gamma}_{\bm{X}}$ the drift parameter of $\bm{X}$, corresponding to the cutoff function $\bm{w}\mapsto\1_{D}(\bm{w})$. 
Moreover, when $\alpha\in(0,1)$, we assume that $\bm{X}$ has zero natural drift, that is, %$\bm{b}_{\bm{X}}=\bm{0}$ , or, equivalently, 
$\bm{\gamma}_{\bm{X}}=\int_D\bm{w}\,\nu_{\bm{X}}(\D\bm{w})$.
\end{assumption*}

As discussed in~\cite[Rem.~2.1]{WassersteinPaper}, Assumption~(\nameref{asm:T}) quantifies the regularity of the corresponding density near the origin $\bm{0}$ through a parameter $p > 0$, where larger values of $p$ correspond to higher degrees of asymptotic regularity. This assumption is widely satisfied in practice; for instance, it holds for the class of augmented stable processes introduced in Section~\ref{subsec:augment_stable}, which includes the popular exponentially tempered stable processes~\cite{MR2327834}.

Under Assumption~(\nameref{asm:T}), we may define a function $G$ such that $t\mapsto tG(t^\alpha)$ is the inverse of the map $t\mapsto tH(t)^{-1/\alpha}$ on $(0,1]$. Then $G(t)=1$ for $t\ge 1$, $G$ is slowly varying at $0$ by virtue of~\cite[Thm~1.5.12]{MR1015093} and, denoting $g(t)\coloneqq t^{1/\alpha}G(t)$, we observe that
\begin{equation}
\label{eq:G-H}
t=tG(t^\alpha)H(tG(t^\alpha))^{-1/\alpha}
\implies
G(t^\alpha)=H(tG(t^\alpha))^{1/\alpha}
\implies 
G(t)=H(g(t))^{1/\alpha},
\quad t>0.
\end{equation}
Hence, $G$ and $H$ either both tend to $0$, $\infty$, a positive finite limit, or do not admit a limit. Moreover, under Assumption~(\nameref{asm:T}), the L\'evy measure $\nu_{\bm{X}^\co}$ has as many moments as $\bm{Z}$ (as $h$ is bounded and $H(x)=1$ for $x\ge 1$).

We now introduce a further balancing condition on the measures $\sigma$ and $\nu_{\bm{X}}$ that, for processes in DoNA with $\alpha>1$, can significantly simplify and improve the convergence rate. In the notation of Assumption~(\nameref{asm:T}),
\begin{equation}
\label{eq:symmetry_thin}
\bm{0}
=\bm{\gamma}_{\bm{X}}
+\int_{\R^d\setminus D}\!\!\!\bm{w}\,\nu_{\bm{X}^\co}(\D\bm{w})
-\int_{D}\bm{w}\,\nu_{\bm{X}^\D}(\D\bm{w})
=\int_{\Sp^{d-1}}\!\!\!\bm{v}\,\sigma(\D\bm{v})
=\int_{\Sp^{d-1}}\!\!\! h(x\bm{v})\bm{v}\,\sigma(\D\bm{v}), 
\enskip\text{for all }x>0.
\end{equation}
Note that the decomposition $\nu_{\bm{X}}=\nu_{\bm{X}^\co}+\nu_{\bm{X}^\D}$ in Assumption~(\nameref{asm:T}) need not be unique, even under~\eqref{eq:symmetry_thin}.

\begin{theorem}
\label{thm:main_res_T}
%Assume that we are in the setting of~\eqref{eq:main_SDE_setting}, where 
Let $\bm{X}$ be a L\'evy process in the small-time DoA of an $
\alpha$-stable process $\bm{Z}$ for some $\alpha \in (0,2)\setminus\{1\}$, and let Assumption~(\nameref{asm:T}) hold. Let $\X_t$ and $\ZZ$ be the solutions to the SDEs given in~\eqref{eq:main_SDE_setting} on $[0,T]$, driven by $\bm{X}_t$ and $\bm{Z}$, respectively. If $\bm{X}_t\eqd(\bm{X}(st)/g(t))_{s\in[0,T]}$ and $\bm{Z}$ follow the thinning coupling, then the family $\{f(t)^{-1}(\X_t-\ZZ)\}_{t\in(0,1]}$ is tight with respect to the norm $\|\cdot\|_{[0,T]}$, where:
\begin{itemize}[leftmargin=4.5em]
\item[\nf{\textbf{DoNA}}]%(Domain of normal attraction): 
If $H\equiv 1$ (hence $G\equiv 1\equiv H_1$, $H_2\equiv 0$, $g(t)=t^{1/\alpha}$), then 
\[
f(t)\coloneqq
\begin{cases}
t^{(p/(2\alpha))\wedge (1-1/\alpha) }, &\alpha\in(1,2)\text{ and \eqref{eq:symmetry_thin} fails}, \\
t^{p/(\lceil\alpha\rceil\alpha)}, &\alpha\in(1,2)\text{ and \eqref{eq:symmetry_thin} holds or $\alpha\in(0,1)$}.
\end{cases}
\]
\item[\nf{\textbf{DoNNA}}]%(Domain of non-normal attraction): 
If $H\in\CSV_0(H_1,H_2)$, $H_2\in\SV_0$ and $G$ is as in~\eqref{eq:G-H}, then
\[
f(t)\coloneqq
H_2(g(t))^{1/\lceil\alpha\rceil}.
\]
\end{itemize}
\end{theorem}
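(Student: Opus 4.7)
The strategy is to invoke the general Wasserstein bound developed in Section~\ref{sec:general_bounds} for the thinning coupling, specialise it to the setting of Assumption~(\nameref{asm:T}), and then optimise the free exponents so as to recover $f(t)$ explicitly.

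First, by Lipschitz continuity of $V$ and the standard It\^o/Gronwall machinery for jump SDEs, the sup-norm $\|\X_t-\ZZ\|_{[0,T]}^q$ is controlled by the analogous $q$-th-moment functional of the driver difference $\bm{X}_t-\bm{Z}$, so the task reduces to estimating $\E\|\bm{X}_t-\bm{Z}\|_{[0,T]}^q$ for a suitable $q$. I would pick $q\in(1,\alpha)$ close to $\alpha$ when $\alpha\in(1,2)$ and $q=1$ when $\alpha\in(0,1)$, matching the ceiling $\lceil\alpha\rceil$ appearing in the target rate. Under the thinning coupling, the two drivers share a common Poisson point process on $(0,T]\times\R^d_{\bm 0}$, and the total jump mismatch decomposes into three pieces: a compensated small-jump martingale on $B_{\bm 0}(1)$, a compound-Poisson large-jump term on $B_{\bm 0}(1)^\co$, and a deterministic drift correction.

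Second, I would compute the rescaled discrepancy measure explicitly. By the $\alpha$-scaling of $\nu_{\bm Z}$ and the identity $G(t)^\alpha=H(g(t))$ from~\eqref{eq:G-H}, the rescaled common-jump measure satisfies
\[
\nu_{\bm{X}_t^\co}(\D\bm u)=\frac{H(|\bm u|g(t))}{H(g(t))}\,h(\bm u g(t))\,\nu_{\bm Z}(\D\bm u),
\]
and its discrepancy density against $\nu_{\bm Z}$ is bounded in absolute value by $H_1(|\bm u|)H_2(g(t))(1+K_h(1\wedge|\bm u g(t)|^p))+K_h(1\wedge |\bm u g(t)|^p)$ via~\eqref{eq:TNN} and Remark~\ref{rem:CSV}(i). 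The discarded measure $\nu_{\bm{X}_t^\D}$ contributes a compound Poisson piece with total mass $tm_0$ whose jump sizes are $\mathcal O(1/g(t))$, which is absorbed into the same moment estimates. Applying BDG to the compensated small-jump martingale then converts these density bounds into moment estimates, the large-jump piece is handled by a direct expectation, and the drift correction vanishes under~\eqref{eq:symmetry_thin} (or, for $\alpha\in(0,1)$, under the zero-natural-drift clause), and otherwise scales as $t^{1-1/\alpha}$.

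Third, I would assemble the contributions. In the DoNA case ($H\equiv 1$, so $H_2\equiv 0$), only the $h$-part of the discrepancy density survives; splitting the $\nu_{\bm Z}$ integral at $|\bm u|=g(t)^{-1}$ and using $\int(1\wedge|\bm u|^{pq})\,\nu_{\bm Z}(\D\bm u)\lesssim 1$ for $pq>\alpha$, the small-jump piece contributes $g(t)^{p}=t^{p/\alpha}$ at order $q=1$, which after extracting the $q$-th root yields $t^{p/\alpha}$ for $\alpha\in(0,1)$ and $t^{p/(2\alpha)}$ for $\alpha\in(1,2)$; the drift term provides the additional $t^{1-1/\alpha}$ floor when~\eqref{eq:symmetry_thin} fails. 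In the DoNNA case, $H_2(g(t))$ enters the discrepancy density linearly, and the $q$-th root yields exactly $H_2(g(t))^{1/\lceil\alpha\rceil}$. Tightness then follows from the resulting $L^q$-bound via Markov's inequality and Remark~\ref{rem:CSV}(iii). The main obstacle will be controlling the large-$|\bm u|$ tail of the CSV discrepancy in DoNNA: Remark~\ref{rem:CSV}(i) only provides the pointwise bound $H_1(|\bm u|)H_2(g(t))$, and $H_1$ need only be slowly varying at $0$ and $\infty$, so one must combine Potter-type bounds on $H_1$ with the integrability of $\nu_{\bm Z}$ away from the origin to keep the $|\bm u|$-integral finite while preserving the full $H_2(g(t))$ rate in $t$.
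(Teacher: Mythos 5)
Your proposal misses the central technical obstruction that the paper's proof is built around, and the route you describe is in fact one of the ``non-viable approaches'' the paper explicitly rules out in Section~\ref{sec:failed_ideas}.

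The first problem is the claimed reduction: you assert that ``the sup-norm $\|\X_t-\ZZ\|_{[0,T]}^q$ is controlled by the analogous $q$-th-moment functional of the driver difference $\bm{X}_t-\bm{Z}$.'' This is false for the general multiplicative SDE~\eqref{eq:main_SDE_setting}. As the paper discusses in Subsection~\ref{subsec:literature}, there is no general linear transfer from driver proximity to solution proximity; the additive SDE case (Proposition~\ref{prop:simple_lin_SDE_gen_bound}, Corollary~\ref{cor:additive_SDE}) is singled out precisely because it is exceptional. For a Lipschitz $V$ that acts multiplicatively, a small directional mismatch in the drivers can produce trajectories that separate dramatically (Figure~\ref{fig:trajectory_SDEs}), which is why the proof must bound $\E\big[\|\Delta\Y\|_{[0,T]}^r\big]$ directly in terms of driver characteristics via the Gr\"onwall--BDGN machinery of Theorem~\ref{thm:gen_bound_thin}, not via a Lipschitz principle applied to $\|\bm{X}_t-\bm{Z}\|_{[0,T]}$.

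The second, deeper problem is the moment regime. Theorem~\ref{thm:gen_bound_thin}(a) requires $\int|\bm{w}|^2\nu_i(\D\bm{w})<\infty$ (finite second moment) and part (b) requires finite first moment and finite variation. But $\bm{Z}$ is a genuine $\alpha$-stable process: it has infinite $\alpha$-th moment, hence infinite second moment for $\alpha\in(1,2)$ and infinite first moment for $\alpha\in(0,1)$. Your suggestion to pick $q\in(1,\alpha)$ for $\alpha>1$ or $q=1$ for $\alpha<1$ does not rescue this: for $\alpha\in(0,1)$ the first moment itself is infinite, and for $\alpha\in(1,2)$ the general Gr\"onwall argument with a fractional power $q<\alpha$ produces an ordinary differential inequality with fractional exponent whose comparison ODE has non-unique solutions, so no Bellman--Gr\"onwall conclusion can be drawn. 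This is spelled out verbatim in Section~\ref{sec:failed_ideas}: ``applying Jensen's inequality with a fractional power \dots\ leads to ordinary differential inequalities with fractional powers and whose corresponding ODEs have non-unique solutions.''

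The idea your proposal is missing is the exponential change of measure. The paper constructs, for each fixed $\theta>0$, a $t$-independent martingale density $M_\theta$ as in~\eqref{eq:M_theta_thinning}, defining an equivalent probability $\p_\theta$ under which the large jumps of both $\bm{X}_t^\co$ and $\bm{Z}$ are exponentially tempered (via~\cite[Thm~33.1]{MR3185174}), so that all processes have moments of every order and Theorem~\ref{thm:gen_bound_thin} applies with integer exponent $r=\lceil\alpha\rceil$. The integral estimates in Proposition~\ref{prop:integrals_DoA_thin} are then carried out against the tempered measure $\nu^\theta$, yielding Theorem~\ref{thm:roc_tempered_sde}: $\sup_{t\in(0,1]}f(t)^{-r}\E_\theta\big[\|\X_t-\ZZ\|^r_{[0,T]}\big]<\infty$. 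Finally, Lemma~\ref{lem:Lp-Girsanov-to-cip} is the bridge back to $\p$: because $M_\theta>0$ a.s.\ and does not depend on $t$, an $L^r(\p_\theta)$-bound implies tightness under $\p$. Without this two-step structure (temper, then transfer via Lemma~\ref{lem:Lp-Girsanov-to-cip}), the estimates you sketch cannot be assembled into a proof, because the quantities you would need to bound are infinite under $\p$. This also explains the exponent $1/\lceil\alpha\rceil$ in the theorem --- it is an artefact of taking the $r$-th root of the $L^r(\p_\theta)$ bound --- rather than of any $q$-th-root near $\alpha$.
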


\begin{remark}
\label{rem:thinning-result}
(i) The tightness of the family $\{f(t)^{-1}(\X_t-\ZZ)\}_{t\in(0,1]}$ with respect to the norm $\|\cdot\|_{[0,T]}$ is equivalent to that of $\{f(t)^{-1}\|\X_t-\ZZ\|_{[0,T]}\}_{t\in(0,1]}$.\\
(ii) Under some assumptions, one can simplify the expression $H_2(g(t))=H_2(t^{1/\alpha}G(t))$, in the sense that $H_2(t^{1/\alpha} G(t))\sim H_2(t^{1/\alpha})$ as $t\da 0$. Indeed, by~\cite[Thm~2.3.1]{MR1015093}, if $H_2$ is slowly varying and for some $\gamma,\lambda>0$, $t\mapsto t^{\gamma/\alpha}G(t^\alpha)$ is eventually non-increasing as $t \da 0$ and 
\begin{equation*}
\bigg(\frac{H_2(\lambda t^{1/\alpha})}{H_2(t^{1/\alpha})}-1\bigg)\log (G(t)) \to 0, \quad \text{ as }t \da 0,
\end{equation*} 
then $H_2(t^{1/\alpha} G(t)^\delta)/ H_2(t^{1/\alpha})\to 1$ as $t \da 0$ uniformly in $\delta \in [0,\Delta]$ for $0<\Delta<1/\gamma $.\\
(iii) Although $H$ is always asymptotically equivalent to several $C^\infty$ slowly varying functions (some of which will also be of controlled slow variation), the derivatives of such smooth equivalents need not be asymptotically equivalent. Hence, by Remark~\ref{rem:CSV}, the corresponding controlling functions $H_2$ (as well as the implied convergence rate in Theorem~\ref{thm:main_res_T}) may be affected by the choice of smooth version. However, finding the ``best'' smooth equivalent appears to be difficult (see, e.g.~\cite[Rem.~2.5(IV)]{WassersteinPaper}).
\end{remark}

\subsection{The Comonotonic Coupling}
\label{subsec:como}
In the present subsection, the comonotonic coupling introduced in~\cite[Sec.~3.2]{WassersteinPaper} and detailed in Subsection~\ref{sec:comonot_coup}, is examined. To state the results, we rely on the technical Assumption~(\nameref{asm:C}), presented below. This can be viewed as a unified formulation of Assumptions~(C) and~(S) from~\cite[Asms~(C) \&~(S)]{WassersteinPaper}, and is applicable under both DoNA and DoNNA.

For the $\alpha$-stable process $\bm{Z}$ defined in Subsection~\ref{sec:prelimi_SDEs_stable} above, we define the radial L\'evy measure $\rho_{\bm{Z}}(\D x,\bm{v})\coloneqq \rho_{\bm{Z}}(\D x)\coloneqq c_\alpha x^{-\alpha-1}\D x$ on $\mathcal{B}((0,\infty))$ and note that the right-continuous inverse of its tail $x \mapsto \rho_{\bm{Z}}([x,\infty),\bm{v})$ is given by $\rho_{\bm{Z}}^{\la}(x,\bm{v})=\rho_{\bm{Z}}^{\la}(x)=(c_\alpha/\alpha)^{1/\alpha}x^{-1/\alpha}$ for all $x>0$ and $\bm{v}\in \Sp^{d-1}$.
\begin{assumption*}[C]
\label{asm:C} 
Let $\bm{Z}$ be an $\alpha$-stable process with $\alpha\in(0,2)$ and $\bm{X}$ be a L\'evy process in the small-time DoA of $\bm{Z}$ with generating triplet $(\bm{\gamma}_{\bm{X}},\bm{0},\nu_{\bm{X}})$, i.e. assume $\bm{X}$ has no Gaussian component $\bm{\Sigma_X}=\bm{0}$. Moreover, assume $\nu_{\bm{X}}=\nu_{\bm{X}^{\co}}+\nu_{\bm{X}^{\mathrm{d}}}$, where $\nu_{\bm{X}^{\mathrm{d}}}$ is arbitrary with finite mass  $\nu_{\bm{X}^{\mathrm{d}}}(\R^d_{\bm{0}})<\infty$ and  the L\'evy measure $\nu_{\bm{X}^{\co}}$ 
can be expressed as 
\begin{equation}
\label{eq:radial_tail_decomp_C}
    \nu_{\bm{X}^{\co}}(B)
=\int_{\Sp^{d-1}}\int_0^\infty \1_{B}(x\bm{v})\rho_{\bm{X}^{\co}}(\D x,\bm{v})\sigma(\D\bm{v}) \quad\&\quad
\rho_{\bm{X}^{\co}}([x,\infty),\bm{v})
= h(x \bm{v}) H(x) x^{-\alpha},
\end{equation}
for all $B\in\mathcal{B}(\R^d_{\bm{0}})$,
$x>0$, $\bm{v}\in \Sp^{d-1}$ and some continuous $h:(0,\infty)\times \Sp^{d-1}\to [0,\infty)$ and monotonic (non-increasing or non-decreasing) function $H\in\SV_0$ with $H(x)=1$ for $x\ge 1$. Define the function
\begin{equation}
\label{eq:defn_G_C}
G(x)\coloneqq\int_{\Sp^{d-1}}H(\rho_{\bm{X}^\co}^{\la}(1/x,\bm{v}))^{1/\alpha}\sigma(\D \bm{v}),\qquad x>0,
\end{equation}
where $\rho_{\bm{X}^\co}^{\la}(\cdot,\bm{v})$ is the right-continuous inverse of $x\mapsto \rho_{\bm{X}}^\co([x,\infty),\bm{v})$. Suppose $G\in\CSV_0(G_1,G_2)$ and
\begin{equation}\label{eq:old_assump_(H)_C}
|c_\alpha/\alpha-h(x\bm{v})|
    \le K_h(1\wedge x^{p}) 
\quad \& \quad 
\big|H(\rho_{\bm{X}^\co}^{\la}(1/x,\bm{v}))^{1/\alpha}/G(x)-1\big|
    \le K_Q(1\wedge x^{\delta}),
\end{equation} for all $x>0, \bm{v} \in \Sp^{d-1}$ and some constants $p,\delta>0$ and $K_h,K_Q\ge 0$. 
\end{assumption*}

\begin{remark}\label{rem:DoNA_como_delta_large}
For a process in DoNA and under Assumption~\ref{asm:C}, we have $H\equiv 1 \equiv G$. In particular, the second inequality in~\eqref{eq:old_assump_(H)_C} is satisfied for all $\delta>0$, i.e. 
\begin{equation*}
    \big|H(\rho_{\bm{X}^\co}^{\la}(1/x,\bm{v}))^{1/\alpha}/G(x)-1\big|\equiv 0
\le K_Q(1\wedge x^{\delta}), \quad \text{ for all }x,\delta >0, \, \bm{v} \in \Sp^{d-1}, \text{ and }K_Q\ge 0.
\end{equation*} 
Thus, for processes in DoNA, we always pick $\delta>0$ sufficiently large not to have any influence on the convergence rates. 
\end{remark}

Under Assumption~(\nameref{asm:C}) with $\alpha>1$, the balancing condition, analogous to~\eqref{eq:symmetry_thin} above, is the following:
\begin{equation}
\label{eq:symmetry_como}
\begin{split}
\bm{0}
&=\bm{\gamma}_{\bm{X}}+\int_{\R^d\setminus D}\!\!\bm{w}\,\nu_{\bm{X}^\co}(\D\bm{w})
-\int_{D}\bm{w}\,\nu_{\bm{X}^\D}(\D\bm{w})\\
&=\int_{\Sp^{d-1}}\!\!\bm{v}\,\sigma(\D\bm{v})
=\int_{\Sp^{d-1}}\!\!\rho^{\la}_{\bm{X}_t^\co}(x,\bm{v})\bm{v}\,\sigma(\D\bm{v}), 
\quad\text{for all } x>0.
\end{split}
\end{equation}

\begin{theorem}
\label{thm:main_res_C}
Let $\bm{X}$ be a L\'evy process in the small-time DoA of an $
\alpha$-stable process $\bm{Z}$ for some $\alpha \in (0,2)\setminus\{1\}$, and let Assumption~(\nameref{asm:C}) hold. Let $\X_t$ and $\ZZ$ be the solutions to the SDEs given in~\eqref{eq:main_SDE_setting} on $[0,T]$, driven by $\bm{X}_t$ and $\bm{Z}$, respectively. If $\bm{X}_t\eqd(\bm{X}(st)/g(t))_{s\in[0,T]}$ and $\bm{Z}$ follow the comonotonic coupling, then the family $\{f(t)^{-1}(\X_t-\ZZ)\}_{t\in(0,1]}$ is tight with respect to the norm $\|\cdot\|_{[0,T]}$, where:
\begin{itemize}[leftmargin=4.5em]
\item[\nf{\textbf{DoNA}}]%(Domain of normal attraction): 
\!If $H\equiv 1$ (hence $G\equiv G_1\equiv 1$, $G_2\equiv 0$, $g(t)=t^{1/\alpha}$), then  
\[
f(t)\coloneqq
\begin{cases}
t^{(p/\alpha)\wedge (1-1/\alpha) }(1+|\log t|\1_{\{p=\alpha-1\}}), &\alpha\in(1,2)\text{ and \eqref{eq:symmetry_como} fails}, \\
t^{p/\alpha}, &\alpha\in(1,2)\text{ and \eqref{eq:symmetry_como} holds or }\alpha\in(0,1).
\end{cases}
\]
\item[\nf{\textbf{DoNNA}}]%(Domain of non-normal attraction): 
\!If $H\in\GSV_0$, $G$ in~\eqref{eq:defn_G_C} is in $\CSV_0(G_1,G_2)$, $G_2\in\SV_0$ and $g(t)=t^{1/\alpha}G(t)$, then $f=G_2$.
\end{itemize}
\end{theorem}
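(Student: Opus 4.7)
The plan is to apply the general SDE--Wasserstein bounds of Section~\ref{sec:general_bounds} specialised to the comonotonic coupling, and then translate the analytic content of Assumption~(\nameref{asm:C}) into a sharp pointwise bound on the jump-size discrepancy of the coupled drivers $\bm{X}_t^\co$ and $\bm{Z}$.

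First I would realise $\bm{X}_t^\co$ and $\bm{Z}$ on a common probability space via a single Poisson random measure $N(\D\bm{v},\D\xi,\D s)$ on $\Sp^{d-1}\times(0,\infty)\times[0,T]$ with intensity $\sigma(\D\bm{v})\D\xi\D s$: each atom $(\bm{v},\xi,s)$ produces simultaneous jumps $\rho_{\bm{Z}}^{\la}(\xi)\bm{v}$ in $\bm{Z}$ and $\rho_{\bm{X}_t^\co}^{\la}(\xi,\bm{v})\bm{v}$ in $\bm{X}_t^\co$, with the disposed part $\bm{X}_t^\D$ added independently. Using $\rho_{\bm{Z}}^{\la}(\xi)=(c_\alpha/\alpha)^{1/\alpha}\xi^{-1/\alpha}$, the scaling identity $\rho_{\bm{X}_t^\co}^{\la}(\xi,\bm{v})=\rho_{\bm{X}^\co}^{\la}(\xi/t,\bm{v})/g(t)$, and the two inequalities in~\eqref{eq:old_assump_(H)_C} via~\eqref{eq:defn_G_C} together with $g(t)=t^{1/\alpha}G(t)$ and $G\in\CSV_0(G_1,G_2)$, I would derive a pointwise estimate of the form
\[
\bigl|\rho_{\bm{X}_t^\co}^{\la}(\xi,\bm{v})-\rho_{\bm{Z}}^{\la}(\xi)\bigr|
\lesssim \rho_{\bm{Z}}^{\la}(\xi)\Bigl(K_h\bigl(1\wedge(g(t)\rho_{\bm{Z}}^{\la}(\xi))^{p}\bigr)+K_Q\bigl(1\wedge(t/\xi)^{\delta}\bigr)+G_2(t)G_1(1/\xi)\Bigr).
\]

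Next, I would invoke the general SDE bound of Section~\ref{sec:general_bounds}: by Lipschitz continuity of $V$ and a Gronwall--BDG argument, $\|\X_t-\ZZ\|_{[0,T]}$ is dominated by (i) a martingale term controlled via the BDG inequality by the integrated square of the jump discrepancy above, (ii) the total variation contribution of the disposed part $\bm{X}_t^\D$ (of finite intensity vanishing as $t\da 0$), and (iii) when $\alpha\in(1,2)$, a first-moment mismatch $\int_{\R^d\setminus D}\bm{w}\,(\nu_{\bm{X}_t^\co}-\nu_{\bm{Z}})(\D\bm{w})$ from the large-jump compensators. Integrating the pointwise bound against $\sigma(\D\bm{v})\D\xi$ and optimising the split between small and large jumps yields a small-jump contribution of order $g(t)^p$, a slow-variation contribution of order $G_2(t)$, and, when~\eqref{eq:symmetry_como} fails, a drift contribution of order $t^{1-1/\alpha}$; the symmetry~\eqref{eq:symmetry_como} is precisely the algebraic identity that eliminates the drift term.

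For the DoNA branch, $H\equiv 1$ gives $G\equiv 1$ and $G_2\equiv 0$ (and Remark~\ref{rem:DoNA_como_delta_large} kills the $K_Q$-term) together with $g(t)=t^{1/\alpha}$, so $g(t)^p=t^{p/\alpha}$. When~\eqref{eq:symmetry_como} fails and $p\ne\alpha-1$, the rate is $t^{(p/\alpha)\wedge(1-1/\alpha)}$; at the critical exponent $p=\alpha-1$ the governing integral $\int_1^{1/g(t)}x^{p-\alpha}\D x$ becomes logarithmic, producing the $1+|\log t|$ factor. Under~\eqref{eq:symmetry_como} or for $\alpha\in(0,1)$ the drift term vanishes and only $t^{p/\alpha}$ survives. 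For the DoNNA branch, $H\in\GSV_0$ together with Remark~\ref{rem:CSV}(ii) forces $G_2$ to decay more slowly than any polynomial, so both $g(t)^p$ and $t^{1-1/\alpha}$ are absorbed into $G_2(t)$, leaving $f=G_2$. The main obstacle will be the compensator/drift mismatch in the regime $\alpha\in(1,2)$: under~\eqref{eq:symmetry_como} the first radial moments of the paired jumps must cancel uniformly in $t$ against the drift after comonotonic pairing, while when~\eqref{eq:symmetry_como} fails one must quantify the residual first-moment imbalance sharply enough to extract the $|\log t|$ boundary at $p=\alpha-1$. These compensation estimates are precisely what yield the sharper comonotonic rate $t^{p/\alpha}$ relative to the $t^{p/(\lceil\alpha\rceil\alpha)}$ of the thinning coupling in Theorem~\ref{thm:main_res_T}.
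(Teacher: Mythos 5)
There is a genuine gap. Your plan is to apply the general Gr\"onwall--BDG estimate of Section~\ref{sec:general_bounds} directly to the coupled drivers $\bm{X}_t^\co$ and $\bm{Z}$, but that estimate (Theorem~\ref{thm:gen_bound_como}) requires finite second moments (case (a), $\alpha\in(1,2)$) or finite first moments (case (b), $\alpha\in(0,1)$), neither of which the $\alpha$-stable limit $\bm{Z}$ possesses: the integral you propose, $\int_0^\infty\bigl|\rho_{\bm{X}_t^\co}^{\la}(\xi,\bm{v})-\rho_{\bm{Z}}^{\la}(\xi)\bigr|^r\D\xi$ with $r=\lceil\alpha\rceil$, diverges near $\xi=0$ because $\rho_{\bm{Z}}^{\la}(\xi)^r\asymp\xi^{-r/\alpha}$ there with $r/\alpha>1$, while your pointwise factor $K_h(1\wedge(g(t)\rho_{\bm{Z}}^{\la}(\xi))^p)+K_Q(1\wedge(t/\xi)^\delta)+G_2(t)G_1(1/\xi)$ does not decay as $\xi\da 0$. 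So the ``integrated square of the jump discrepancy'' you invoke is infinite, and the Gr\"onwall bound is vacuous. Consistent with this, the theorem only asserts tightness in probability, not any $L^p$ bound under $\p$, precisely because the family is far from uniformly integrable (see the remark after Theorem~\ref{thm:roc_tempered_sde}).

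The missing idea is the Girsanov-type tilting of the common Poisson random measure: the paper defines $\p_\theta\ll\p$ via $\D\p_\theta/\D\p=\exp(\vartheta T-\theta\int_{[0,T]\times(0,1)\times\Sp^{d-1}}x^{-1}\Xi^\co(\D s,\D x,\D\bm{v}))$ in~\eqref{eq:P_theta^t_cm}, which exponentially tempers the large jumps (equivalently multiplies the radial intensity by $\e^{-\theta/x}$ on $x\in(0,1)$) for both coupled processes simultaneously and independently of $t$. Under $\p_\theta$, all moments exist, Theorem~\ref{thm:gen_bound_como} applies, and Proposition~\ref{prop:integrals_DoA_como} (driven by Lemma~\ref{lem:bounds_h_2_multidim}, which is the analogue of your pointwise estimate) gives the tempered integrals the rates $G_2(t)^r+g(t)^{rp}+t^{r\delta}$ plus the drift correction with the $\1_{\{p=\alpha-1\}}\ov{G}_1$ log factor. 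The conclusion for $\p$ then follows from Lemma~\ref{lem:Lp-Girsanov-to-cip}, which converts $\sup_t\E[\xi(t)^rM_\theta]<\infty$ with $M_\theta>0$ a.s.\ into tightness of $\{\xi(t)\}$. Your remaining ingredients (the unified Poisson realisation, the comonotonic jump matching, the role of~\eqref{eq:symmetry_como} in killing the drift term, the origin of the logarithm at $p=\alpha-1$) line up with the paper, but without the change of measure and Lemma~\ref{lem:Lp-Girsanov-to-cip} the argument does not close.
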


\begin{remark}
\label{rem:TvsC}
Let the assumptions of both Theorems~\ref{thm:gen_bound_thin} and~\ref{thm:gen_bound_como} hold with the same parameter $p>0$ (see Corollary~\ref{cor:augmented_stable_rates}, where these results are applied to the class of \emph{augmented stable processes}) and suppose $\alpha>1$. The guaranteed convergence rate (as a power of $t$) of Theorem~\ref{thm:gen_bound_como} is better than that of Theorem~\ref{thm:gen_bound_thin} by up to a factor of $2=\lceil\alpha\rceil$ if either both balancing conditions hold or if they both fail and $p$ is sufficiently small. This suggests that accumulating small errors in every jump (which is how the comonotonic coupling is constructed) is asymptotically better than doing a perfect match for as many jumps as possible (which is how the thinning coupling is constructed).
\end{remark}

\subsection{Illustrations of Paths of Solutions to SDEs and Tightness}\label{sec:illustrations}
In the small-time DoA regime, even small discrepancies in the driving L\'evy processes can be magnified due to the sensitivity of the Lipschitz function $V$ in the SDE dynamics. This phenomenon is especially pronounced in the presence of jumps, where slight differences in the jumps (magnitude or angle) can result in markedly different solution paths. Figure~\ref{fig:trajectory_SDEs} shows that trajectories driven by nearly identical α-stable processes can diverge significantly, even when sharing the same initial condition and drift structure. This highlights the need for precise $L^p$-type estimates of pathwise differences to quantify convergence in probability.

\begin{figure}[ht]
    \centering
%\begin{subfigure}[ht]{.49\linewidth}
%    \centering\includegraphics[width=1\linewidth]{SDE5.png}
   % \caption{\small ??}
  %\end{subfigure}
  \begin{subfigure}[ht]{.49\linewidth}
    \centering\includegraphics[width=1\linewidth]{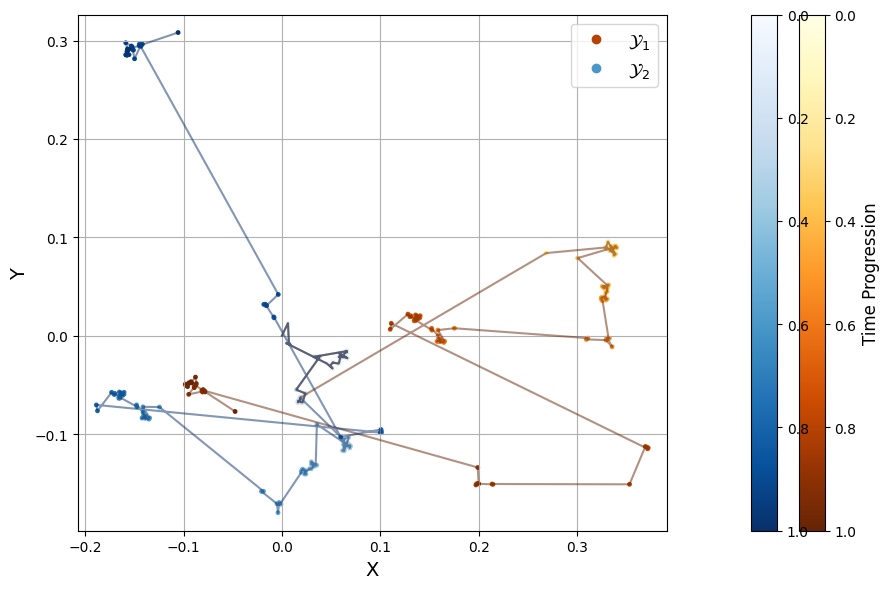}
   % \caption{\small ??}
  \end{subfigure}
  \begin{subfigure}[ht]{.49\linewidth}
    \centering\includegraphics[width=1\linewidth]{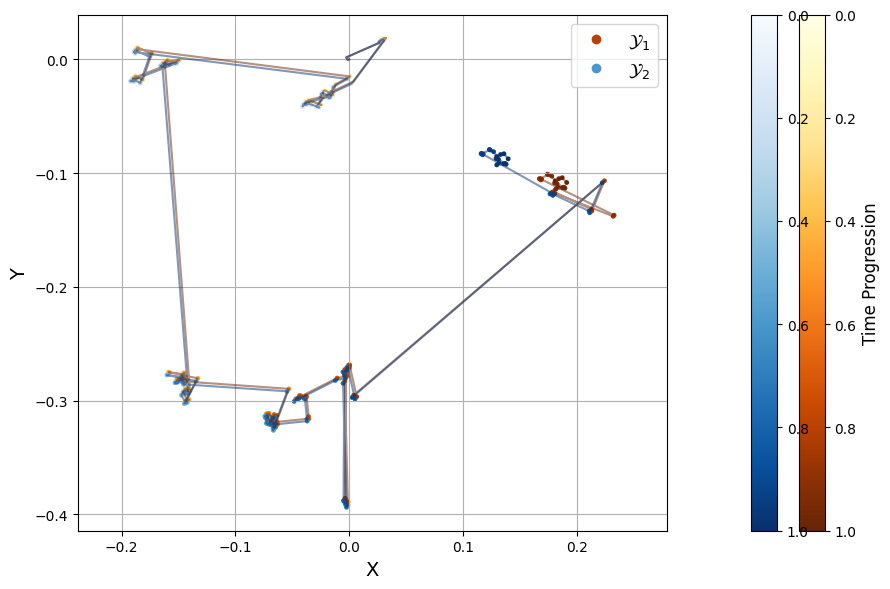}
   % \caption{\small ??}
  \end{subfigure}
    \caption{\small Each picture depicts the paths of the SDE solutions $\Y(t)\coloneqq \int_0^t V(\Y(r-))\D \bm{Y}(r)$ in $\R^2$ for $t \in [0,1]$, where $\bm{Y}\in \R$ is a one-dimensional $\alpha$-stable process (red) or a process in its DoA (blue) and $\bm{V}(\bm{x})$ is the rotation matrix with angle $|\bm{x}|$.}
    \label{fig:trajectory_SDEs}
\end{figure}

The $L^p$-bounds in~\cite{WassersteinPaper} are designed for direct comparisons between L\'evy processes and are unsuitable for the nonlinear SDEs considered here (see Subsection~\ref{subsec:additive} for the additive case). Notably, Figure~\ref{fig:trajectory_SDEs} also shows examples where the solution paths remain close throughout the interval $[0,1]$. These examples indicate that the primary factor in trajectory similarity is whether the paths jump in the same direction. Conversely, differences in jump magnitudes, though nontrivial, tend to cause smaller deviations. This is evident in the first picture of Figure~\ref{fig:trajectory_SDEs}, where the paths diverge dramatically after jumps in different directions.

This observation is key to understanding the role and effectiveness of the couplings used. The thinning and comonotonic couplings differ significantly in their constructions. The thinning coupling starts by identifying a common dominating L\'evy measure, such as the sum of the original measures, under which both L\'evy measures are absolutely continuous with bounded densities. A Poisson random measure with this intensity is thinned to generate the jump measures of the individual processes, maximizing shared jumps. In contrast, the comonotonic coupling assumes that the L\'evy processes have radial decompositions with potentially distinct angular components. A shared angular measure is constructed, and LePage's series representation synchronizes jump directions. Magnitudes are coupled via the comonotonic (optimal transport) coupling, using right-continuous inverses of the radial tail L\'evy measures at common Poisson epochs. This yields a coupling, where the directional alignment of the jumps are preserved, while allowing for differences in jump size.

Consequently, the comonotonic coupling ensures that jumps occur in the same direction, reducing trajectory divergence due to directional differences. This sometimes improves convergence rates compared to the thinning coupling, which may permit mismatched jump directions, as can be seen in Theorems~\ref{thm:main_res_T} \&~\ref{thm:main_res_C}. See, for example,~\cite[Fig.~1]{WassersteinPaper} for a visual comparison that highlights the distinction between the couplings of the jumps.

Figure~\ref{fig:tail_thin} illustrates the probabilistic behaviour of the pathwise deviations of two L\'evy-driven SDEs, induced by subjecting the L\'evy drivers to the thinning coupling. The process $\bm{Z}$ is an isotropic $\alpha$-stable process with $\alpha=.7$, while $\bm{X}$ is an exponentially tempered stable process, whose jump measure is the result of thinning the jump measure of $\bm{Z}$. The chosen function $V$ takes values in the space of orthogonal matrices (i.e., rotation matrices), so that the difference in the trajectories of $\ZZ$ and $\X_t$ is determined by the aggregation of missing jumps in the thinned process as well as the differences in the rotation angles. The picture shows the numerical estimate of tail probabilities $\p(f(t)^{-1}\|\X_t - \ZZ\|_{[0,T]} > r)$ (where $f(t)=t^{1/\alpha}$ is as in Theorem~\ref{thm:main_res_T}(DoNA)) for several values of $t$, revealing a consistent decay to $0$ in the point-wise supremum as $r$ increases, and illustrating the tightness of this family. As $t \downarrow 0$, the processes become increasingly closer (fewer jumps are thinned out), and the corresponding tail probabilities quickly drop to a small level, remain flat for long and then decay polynomially. These numerical results align with the theoretical guarantees for the thinning coupling in Theorem~\ref{thm:main_res_T}(DoNA): the family $\{f(t)^{-1}\|\X_t-\ZZ\|_{[0,T]}\}_{t \in (0,1]}$ is tight. Moreover, as appreciated in the logarithmic scale of the graph, the supremum tail function $\sup_{t\in(0,1]}\p(f(t)^{-1}\|\X_t - \ZZ\|_{[0,T]} > r)$ appears to be very heavy, reinforcing the lack of uniform integrability and lack of control in Wasserstein distance of the aforementioned family of random variables.

\begin{figure}[ht]
    \centering
    %\begin{subfigure}[t]{0.48\linewidth}
    %    \centering
    %    \includegraphics[width=\linewidth]{download (11).png}
     %   \caption{Tail $\sup_{t \in (0,1]}\p(f(t)^{-1}\|\X_t - \ZZ\|_{[0,T]}/f(t) > r)$.}
    %\end{subfigure}
    %\hfill
    %\begin{subfigure}[t]{0.48\linewidth}
    %    \centering
        \includegraphics[width=.95\linewidth]{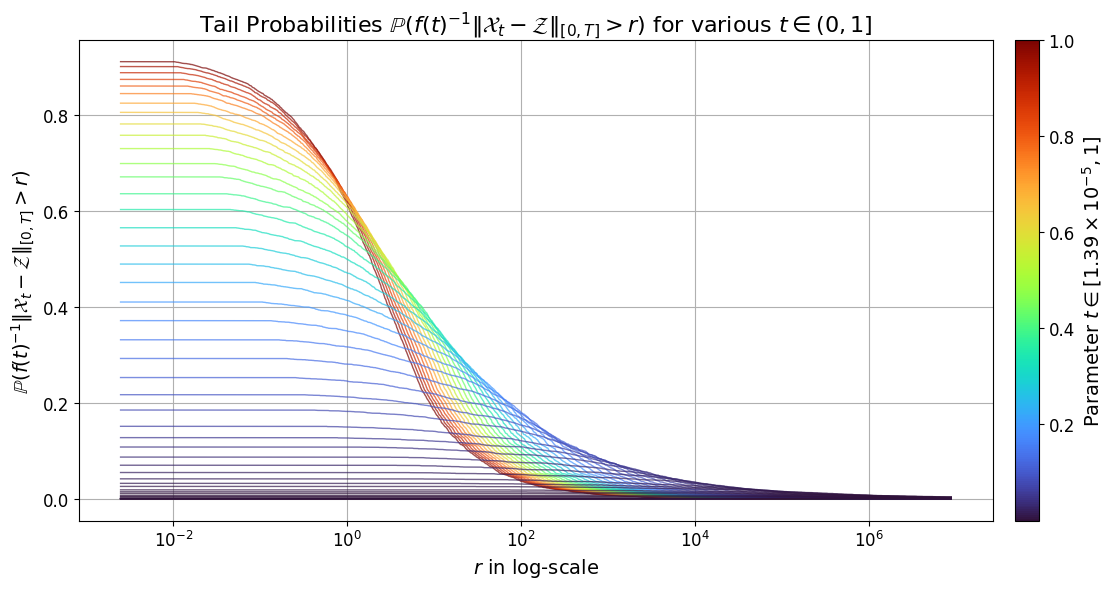}
        %\caption{Tail curves $\p(f(t)^{-1}\|\X_t - \ZZ\|_{[0,T]} > r)$ for $51$ parameters $t \in (0,1]$.}
    %\end{subfigure}
    \caption{Tail probabilities of the rescaled pathwise difference between two coupled (via the thinning coupling introduced in Section~\ref{sec_thinning_coup}) solutions to the SDEs $\X_t(t) = \int_0^t V(\X_t(r-)) \D \bm{X}_t(r)$ and $\ZZ(t) = \int_0^t V(\ZZ(r-)) \D \bm{Z}(r)$ in $\R^2$ driven by an $\alpha$-stable L\'evy processes with $\alpha=0.7$ and $V(\bm{x})$ is a rotation matrix with angle $|\bm{x}|$. The picture shows the full family of tail curves indexed by $t\in\{\exp(1-\e^{k/20})\}_{k\in\{0,\ldots,50\}}$ of $5000$ sampled paths.}
    \label{fig:tail_thin}
\end{figure}

\subsection{Organisation and Strategy of the Proofs.} 
\label{subsec:outline}
The organization of the proofs for the main results is described here, along with a roadmap for establishing convergence rates in Wasserstein distance and probability (for both thinning and comonotonic couplings).
\begin{itemize}[leftmargin=1.5em, nosep]
\item[\textbf{1.}] First, Section~\ref{sec:general_bounds} establishes general bounds on the uniform $L^2$-distance $\E[\|\Y_1-\Y_2\|_{[0,T]}^2]^{1/2}$ (resp. uniform $L^1$-distance $\E[\|\Y_1-\Y_2\|_{[0,T]}]$) for SDEs driven by coupled L\'evy processes $\bm{Y}_1$ and $\bm{Y}_2$ with a finite second moment (resp. with finite mean and paths of finite variation) in terms of the drivers' characteristics. These bounds are presented in Theorems~\ref{thm:gen_bound_thin} \&~\ref{thm:gen_bound_como}, and their proofs are based on grouping up the appropriate components, using the Lipschitz assumptions, the Burkholder--Davis--Gundy--Novikov (BDGN) maximal inequality (sometimes known as the Burkholder--Davis--Gundy or BDG inequality), and Gr\"onwall's inequality. 
\item[\textbf{2.}] Section~\ref{sec:application_DoA} introduces a martingale change of measure $M_\theta$ (given in~\eqref{eq:M_theta_thinning} for the thinning coupling and~\eqref{eq:M_theta_como} for the comonotonic coupling) that induces an exponential tempering, so that the drivers $\bm{X}_t$ and $\bm{Z}$ of the SDEs in~\eqref{eq:main_SDE_setting} have finite moments of any order. This martingale change of measure is designed carefully to not depend on $t$. We then apply the bounds from step 1 under the equivalent probability measure. The resulting bound for $\E[\|\X_t-\ZZ\|_{[0,T]}^pM_\theta]$ will be expressed in terms of integrals and other expressions of the characteristics. These will then be bounded as functions of $t$ under appropriate conditions in Propositions~\ref{prop:integrals_DoA_thin} and~\ref{prop:integrals_DoA_como}. For processes in DoNA (resp. DoNNA), the resulting bound will be of the form $\Oh(t^q)$ for some $q>0$ (resp. $\Oh(f(t))$ for some $f \in \SV_0$) and fixed $\theta>0$. 
\item[\textbf{3.}] Finally, Subsection~\ref{subsec:proofs_main_results} provides the proofs of the main results with the aid of Lemma~\ref{lem:Lp-Girsanov-to-cip}. This auxiliary lemma essentially translates bounds in $L^p$ under an equivalent probability measure to establish tightness of families of random variables under the original measure~$\p$.
\end{itemize}

\subsection{Literature Comparison}
\label{subsec:literature}

This work draws inspiration from~\cite{MR3784492,WassersteinPaper,deng2024optimalwasserstein1distancesdes,MR3332269,deng2024totalvariationdistancesdes}. The primary motivation,~\cite{WassersteinPaper}, studies the rate of convergence of multivariate Lévy processes to their stable limits in the small-time regime (i.e., the rate of$\bm{X}_t$ converging weakly to $\bm{Z}$ as $t \downarrow 0$) via the thinning and comonotonic couplings introduced therein. Lower bounds (derived analytically) and upper bounds (derived via the couplings) on the Wasserstein distance $\mW_q(\bm{X}_t,\bm{Z})$ are derived, often with matching rates, demonstrating the effectiveness of these couplings in the DoNA and DoNNA. The analysis reveals a known but understudied contrast between the regimes: convergence is polynomial in DoNA, while in DONNA it is slower than any slowly varying function integrable at $0$ against the measure $t^{-1}\D t$. This mirrors the contrasting behaviour in Theorems~\ref{thm:main_res_T} \&~\ref{thm:main_res_C}.

While this paper draws on the coupling constructions from~\cite{WassersteinPaper}, the technical development diverges significantly. Specifically, the $L^p$ bounds from~\cite[Sec.~3]{WassersteinPaper} are not directly applicable to the SDEs here, as there are no general ways to directly link the proximity of the drivers in $L^p$ to that of the SDE solutions, as illustrated by the many convergence analyses of SDEs and Wong-Zakai-type results, e.g.,~\cite{kosenkova2020orderconvergenceweakwongzakai,pavlyukevich2025stronguniformwongzakaiapproximations,MR3332269,MR2759203,deng2024optimalwasserstein1distancesdes}. Section~\ref{sec:general_bounds} therefore derives new bounds, relying on tools such as Gr\"onwall's and BDGN's maximal inequalities. While using the coupling framework of~\cite{WassersteinPaper}, these estimates are crucial for handling the complexities of L\'evy-driven SDEs, highlighting the distinct technical challenges posed by this context. In fact, our focus on convergence in probability, rather than Wasserstein distances, leads to upper bounds that can be much faster than the general lower bounds established in~\cite{WassersteinPaper} for Wasserstein distances. Additionally, some techniques used in~\cite{WassersteinPaper} appear not to be applicable in our context. For instance, the partial duality of Wasserstein distances is absent in our context, so we cannot obtain lower bounds in DoNA. Similarly, the use of auxiliary $L^q$ norms (along with Jensen's inequality) for certain $q\in(0,\alpha)\cap(0,1]$ used in~\cite{WassersteinPaper} appears to lead to unusable bounds in the context of SDEs. Indeed, such bounds lead to ordinary differential inequalities for Wasserstein distances with fractional powers, rendering any Bellman--Gr\"onwall-type inequality unusable due to the lack of uniqueness for ODE solutions with fractional powers.

The stability of SDEs were the subject of study of~\cite{deng2024totalvariationdistancesdes,deng2024optimalwasserstein1distancesdes,PTRFAleks,MR4733911,MR4791608}. The papers~\cite{PTRFAleks,MR4733911,MR4791608} analyse stationarity and convergence rates, while~\cite{deng2024totalvariationdistancesdes,deng2024optimalwasserstein1distancesdes} study the total variation and $L^1$-Wasserstein distances between the marginal laws of solutions to SDEs driven by an isotropic $\alpha$-stable process and Brownian motion. The obtained bounds only yield vanishing rates when $\alpha \ua 2$ for marginals at fixed time $t$ and even uniformly on intervals $[t,\infty)$ for $t>0$. In contrast, our work focuses on the uniform convergence in probability on $[0,T]$ for the $\alpha$-stable small-time DoA, in addition to our non-asymptotic bounds on the Wasserstein distance between the solutions of SDEs driven by two general L\'evy processes. This change of regime, introduces different mathematical challenges and renders ergodic techniques (e.g., via Lyapunov conditions) from~\cite{deng2024totalvariationdistancesdes} inapplicable to our setting. %Moreover, the methods in~\cite{deng2024totalvariationdistancesdes} rely heavily on Malliavin calculus, random time change, and semigroup techniques to handle the ergodic and smoothing properties of the solutions. 
%Their main result,~\cite[Thm~1.1]{deng2024totalvariationdistancesdes}, leverages Lyapunov conditions and semigroup gradient estimates to obtain exponential-type bounds on total variation distance, which decay slowly and only become effective in the long-time regime $t\to \infty$. 
Similarly, the techniques in~\cite{deng2024optimalwasserstein1distancesdes} are not suited for quantifying the convergence of L\'evy-driven SDEs with drivers in the small-time DoA of stable processes.

The papers~\cite{MR3332269,MR4307706,MR4791608} analyse and compare solutions of SDEs with additive noise (i.e., as in~\eqref{eq:simple_lin_SDEs}). While the authors of~\cite{MR4307706} are generally interested in the regularisation by noise (i.e., understanding the limit behaviour as the additive noise vanishes), both papers~\cite{MR4307706,MR4791608} analyse an abrupt or cutoff thermalisation phenomenon. In contrast, the authors of~\cite{MR3332269} are interested comparing solutions of SDEs as the driving L\'evy processes become closer to each other, obtaining non-asymptotic bounds on the truncated Wasserstein distance (defined in~\eqref{eq:general_wasserstein_trunk}, using the cost function $\rho(\bm{x},\bm{y}) = |\bm{x} - \bm{y}| \wedge 1$) in terms of the drivers' characteristics. Notably, the authors work with the truncated Wasserstein distance to ensure finiteness of the metric even in the presence of heavy tails, using the elementary estimate $y \wedge 1 \le 4\arctan(y)$ for $y \ge 0$, in combination with It\^o's formula and Gr\"onwall's lemma, to establish their result for this class of SDEs. These techniques, however, do not easily extend to more general SDEs or the stronger $L^p$-Wasserstein distances considered in the present paper (see Section~\ref{subsec:additive}). However, there is a partial agreement in the coupling used therein and the comonotonic coupling introduced in~\cite{WassersteinPaper} and used here. In~\cite{MR3332269}, the authors match the intensities of the large-jump components of both driving L\'evy processes to an arbitrary parameter $r>0$. Then, the large-jump components are coupled to have the same number of jumps and so that each pair of large jumps is coupled optimally (i.e., attaining the optimal transport cost, which coincides with the comonotonic coupling of real random variables), while the small-jump components are simply coupled independently. By letting $r\to\infty$ in the resulting bounds, we arrive at a bound that is very similar to the corresponding bound for the comonotonic coupling for this class of SDEs under the bounded metric. Unfortunately, even the limiting bound in~\cite{MR3332269}, absent of the structural assumption on its radial decomposition, is written in terms of limits of Wasserstein distances, which are far from explicit. This difficulty can be appreciated in the one-dimensional nature of the examples in~\cite{MR3332269}, since in one dimension the Wasserstein distance is explicitly written in terms of the comonotonic coupling.

In~\cite{MR4779850}, the authors investigate optimal Markovian couplings of Markov processes, where optimality is defined through the minimisation of concave transport costs between the time-marginal distributions of the coupled processes. Their focus is on one-dimensional L\'evy processes with finite activity and unimodal (but not necessarily symmetric) jump distributions. The main result of the paper is that, under certain structural conditions, an explicit optimal Markovian coupling can be constructed in this setting. This construction combines McCann’s theory of optimal transport and Rogers’ characterisation of couplings for random walks with a novel uniformisation technique, which allows the authors to address all finite-activity L\'evy processes. The method relies critically on the finite jump activity assumption, so it does not directly extend to even the infinite-activity L\'evy setting considered in~\cite{WassersteinPaper}, where the processes (and their $\alpha$-stable limits) all have infinite activity.

Several works have analysed Euler--Maruyama-type discretisations of SDEs, Wong--Zakai-type theorems and convergence rates of other numerical schemes, including~\cite{MR2759203,MR2802466,pavlyukevich2025stronguniformwongzakaiapproximations,kosenkova2020orderconvergenceweakwongzakai}, to name a few. Convergence properties of such schemes usually are related to the small-time and small-jump dynamics of the SDE and its L\'evy driver, with strong error bounds and multilevel variations often involving coupling the small-jump components with a further Brownian term via, e.g., the celebrated Koml\'os--Major--Tusn\'ady coupling. These approaches are especially effective for L\'evy processes with highly infinite activity, such as $\alpha$-stable processes, enabling nearly optimal convergence rates (up to logarithmic factors) for processes with highly active small jumps. For instance,~\cite[Thm~3.1]{MR2759203} gives a near-optimal bound between the solution of the SDEs driven by the original process and the process with the Gaussian approximation. Crucially, the only difference in the drivers is a small-jump martingale (with fixed cutoff and exponential moments of any order) and a Brownian motion with the same variance. These features, however, are never present in the small-time DoA studied in the present work. In fact, any discrepancy in the L\'evy measures will eventually be pushed to the large-jump components after the normalisation required for the weak convergence.

\subsection{The Conflict Between Jump Tail Indices at Zero and Infinity}
\label{sec:failed_ideas}

This subsection discusses ideas that initially seemed promising but led to unexpected difficulties, thereby elucidating the limitations of existing theory.

It is natural to expect the regularity and stability of processes to carry over to the solutions of driven SDEs through a Bellman--Gr\"onwall inequality. Specifically, one might expect the distance between SDE solutions to be expressed in terms of model characteristics and the distance between drivers. However, Wong--Zakai theorems (e.g.,~\cite{kosenkova2020orderconvergenceweakwongzakai,pavlyukevich2025stronguniformwongzakaiapproximations}) show that this relationship is not always linear or simple, making SDEs with additive noise an exception rather than the rule (see Corollary~\ref{cor:additive_SDE}). Instead, similar bounds required examining the driver's characteristics and couplings (see Theorems~\ref{thm:gen_bound_thin} and~\ref{thm:gen_bound_como}). In turn, these bounds required inequalities such as the BDGN and Bellman--Gr\"onwall inequalities. As expected, direct applications for general semimartingales require second moment assumptions; reducing this to a first moment requires paths of finite variation.

It might be possible to partially fill this gap using a continuum of possibilities balancing the summability of small jumps and the finiteness of moments. However, the stable limit here has jumps in $\ell^2\setminus\ell^{\alpha}$ a.s. and with infinite $\alpha$-moment, precluding direct Bellman--Gr\"onwall inequalities. One can attempt to solve this as in~\cite{WassersteinPaper}, by applying Jensen's inequality with a fractional power (or any other concave function). However, this leads to ordinary differential inequalities with fractional powers and whose corresponding ODEs have non-unique solutions. In turn, this lack of uniqueness makes \emph{any} type of Bellman--Gr\"onwall inequality unfructiferous, since even zero initial conditions cannot force the solutions to remain nil. Even when considering a bounded or truncated metric (as in~\cite{MR3332269}), this issue is not immediately resolved, since we are eventually forced to employ the BDGN inequality, leading to the same problem. In some sense, a tailored BDGN inequality for a bounded metric appears to be necessary to control the convergence in Wasserstein distance of general SDE solutions in the small-time stable DoA. We are only able to derive rates for the uniform convergence in probability through an exponential change of measure that gives all processes sufficiently many moments. 

Using a tailored BDGN inequality for the bounded Wasserstein metric can lead to a convergence rate, as the authors found out. This requires varying the cutoff level for large versus small jumps and taking limits, potentially leading to convergence. However, the difficulty in recovering integrability at both $0$ and $\infty$, and the non-uniqueness of generalized Bellman--Gr\"onwall inequalities, mean that a cutoff level leading to convergence would also lead to a very slow convergence rate. Indeed, this approach yields a convergence rate that is incorrect by at least a logarithmic factor, rendering it intractable, as even with DoNA and a truncated metric, the rate would be some power of logarithm.

\section{Special Cases}
\label{sec:special}

\subsection{Wasserstein Distances for SDEs with Additive Noise}
\label{subsec:additive}
This section considers $L^q$-type distances to derive convergence rates for solutions of SDEs with additive noise. Let $\X = (\X(s))_{s \in [0,T]}$ and $\Y = (\Y(s))_{s \in [0,T]}$ be $\R^m$-valued stochastic processes, and $\varrho$ be a metric on $\R^m$. Define the uniform $L^q$-Wasserstein distance between paths via
\begin{equation}\label{eq:general_wasserstein_trunk}
\mW_q(\X, \Y)
\coloneqq \inf_{\X' \eqd \X, \, \Y' \eqd \Y}
\E\bigg[\sup_{s \in [0,T]} \varrho(\X'(s), \Y'(s))^q\bigg]^{1/(q \vee 1)},
\quad \text{for }q > 0,
\end{equation}
where the infimum is taken over all couplings $(\X', \Y')$ such that $\X' \eqd \X$ and $\Y' \eqd \Y$. Throughout, we typically use the Euclidean metric $\varrho(\bm{x}, \bm{y}) = |\bm{x} - \bm{y}|$ for $\bm{x},\bm{y} \in \R^m$, but in some cases it is advantageous to consider the truncated Wasserstein distance: $\varrho(\bm{x}, \bm{y}) = |\bm{x} - \bm{y}| \wedge 1$ for $\bm{x},\bm{y} \in \R^m$ (see, e.g.,~\cite{MR3332269}). For $\R^d$-valued random variables $\bm{\xi}$ and $\bm{\zeta}$, the corresponding $L^q$-Wasserstein distance is defined by 
\[
\mW_{q}(\bm{\xi},\bm{\zeta}) 
= \inf_{\bm{\xi}' \eqd \bm{\xi},\,\bm{\zeta}' 
\eqd \bm{\zeta}} \E\big[\varrho(\bm{\xi}', \bm{\zeta}')^q\big]^{1/(q \vee 1)},
\qquad\text{for }q>0,
\]
where the infimum is taken over all couplings $(\bm{\xi}',\bm{\zeta}')$ with $\bm{\xi}'\eqd\bm{\xi}$ and $\bm{\zeta}'\eqd\bm{\zeta}$.

In the ensuing discussion, we will work with the standard Wasserstein distance, corresponding to $\varrho(\bm{x}, \bm{y}) = |\bm{x} - \bm{y}|$ for $\bm{x},\bm{y} \in \R^d$. Under DoNA,~\cite[Thm~2.2]{WassersteinPaper} gives an explicit rate at which the rescaled L\'evy process $\bm{X}_t$ converges to its stable limit $\bm{Z}$ in Wasserstein distance as $t \da 0$. The corresponding result in DoNNA is given in~\cite[Thm~2.5]{WassersteinPaper}. For the class of SDEs with additive noise, bounds for $\mW_q(\bm{X}_t, \bm{Z})$ can be transferred to $\mW_q(\X_t, \ZZ)$. Consider now the following SDEs with additive noise (as in~\cite{MR3332269}):
\begin{equation}
\label{eq:simple_lin_SDEs}
\X_t(s) = \bm{x} + \int_0^s V(\X_t(r-)) \, \D r 
    + \bm{X}_t(s),
\qquad
\ZZ(s) = \bm{x} + \int_0^s V(\ZZ(r-)) \, \D r 
    + \bm{Z}(s),
\end{equation}
for $s \in [0,T]$ and $t \in (0,1]$, where $\bm{X}_t$ is a pure-jump L\'evy process in the DoA of the $\alpha$-stable process $\bm{Z}$, both in $\R^d$, and $V:\R^d \mapsto \R^{d}$ is assumed to be Lipschitz continuous (in the notation used above, we have $d=m$), see also Section~\ref{subsec:additive-SDE}.

\begin{corollary}
\label{cor:additive_SDE}
Let $\bm{X}$ be in the small-time DoA of the $\alpha$-stable process $\bm{Z}$ with $\alpha \in (0,2)\setminus\{1\}$. Fix any $T>0$ and let $\X_t$ and $\ZZ$ be the solutions to the SDEs given in~\eqref{eq:simple_lin_SDEs} and $q \in (0,1]\cap(0,\alpha)$ satisfy $\E[|\bm{X}_1|^q]<\infty$. Then, 
\[
\mW_q(\X_t,\ZZ)
\le \Oh(\mW_q(\bm{X}_t,\bm{Z})), \quad \text{ as }t \da 0.
\]
Moreover, if Assumption~(\nameref{asm:C}) holds for $p \in (0,\infty)\setminus\{\alpha-1\}$ and $\delta>0$, then the following hold.
\begin{itemize}[leftmargin=4.5em]
\item[\nf{\textbf{DoNA}}] Suppose $H\equiv 1$ and $g(t)=t^{1/\alpha}$, then, as $t \da 0$, 
\[
\mW_q\big(\bm{X}_t,\bm{Z}\big)
=\begin{dcases}
\Oh\big(t^{\min\{pq/\alpha,1-q/\alpha\}}\big),&\alpha\in(0,1),\\
\Oh\big(t^{q\min\{p/\alpha,1-1/\alpha\}}\big),&\alpha\in(1,2).
\end{dcases}
\]
\item[\nf{\textbf{DoNNA}}] Suppose $H\in\GSV_0$, $G$ in~\eqref{eq:defn_G_C} is in $\CSV_0(G_1,G_2)$, $G_2\in\SV_0$ and $g(t)=t^{1/\alpha}G(t)$. If $q \ne \alpha/(p+1),\alpha/(\alpha\delta+1)$, then 
\[
\mW_q(\bm{X}_t,\bm{Z})
=\Oh ( G_2(t)^q),
\qquad\text{as }t \da 0.
\]
\end{itemize}
\end{corollary}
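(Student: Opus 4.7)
The plan is to reduce $\mW_q(\X_t,\ZZ)$ to $\mW_q(\bm{X}_t,\bm{Z})$ via a pathwise Gr\"onwall argument exploiting the additive form of~\eqref{eq:simple_lin_SDEs}, and then to invoke the small-time Wasserstein upper bounds for the drivers proved in~\cite{WassersteinPaper}.

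Given any admissible coupling $(\bm{X}_t',\bm{Z}')$ of $(\bm{X}_t,\bm{Z})$, construct the corresponding solutions $\X_t',\ZZ'$ to~\eqref{eq:simple_lin_SDEs} driven by $\bm{X}_t'$ and $\bm{Z}'$; this is legitimate because the substitution $\bm{Y}(s)\coloneqq\X_t'(s)-\bm{X}_t'(s)$ reduces~\eqref{eq:simple_lin_SDEs} to a Lipschitz random ODE, which has a unique pathwise solution depending measurably on the noise. Subtracting the two SDEs and applying the Lipschitz bound $|V(\bm{y})-V(\bm{z})|\le K|\bm{y}-\bm{z}|$ yields, for $\Phi(s)\coloneqq\sup_{u\in[0,s]}|\X_t'(u)-\ZZ'(u)|$ and $\Xi(s)\coloneqq\sup_{u\in[0,s]}|\bm{X}_t'(u)-\bm{Z}'(u)|$, the pathwise inequality
\[
\Phi(s)\le \Xi(T)+K\int_0^s\Phi(r)\,\D r,\qquad s\in[0,T].
\]
Continuous Gr\"onwall then gives $\Phi(T)\le e^{KT}\Xi(T)$. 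Raising to the $q$-th power (with $q\in(0,1]$, so that $q\vee 1=1$ in~\eqref{eq:general_wasserstein_trunk}), taking expectations, and infimising over couplings produces
\[
\mW_q(\X_t,\ZZ)\le e^{qKT}\,\mW_q(\bm{X}_t,\bm{Z}),
\]
which proves the first claim.

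The explicit DoNA and DoNNA rates then follow by substituting the upper bounds on $\mW_q(\bm{X}_t,\bm{Z})$ established under Assumption~(\nameref{asm:C}) via the comonotonic coupling in~\cite[Thm~2.2]{WassersteinPaper} and~\cite[Thm~2.5]{WassersteinPaper}. The hypothesis $\E[|\bm{X}_1|^q]<\infty$ combined with $q<\alpha$ ensures finiteness of these driver bounds, while the exclusions $p\ne\alpha-1$ and $q\notin\{\alpha/(p+1),\alpha/(\alpha\delta+1)\}$ correspond exactly to the boundaries at which logarithmic correctors appear in those theorems; avoiding them gives the pure polynomial (DoNA) and $G_2(t)^q$ (DoNNA) rates stated.

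The Gr\"onwall step is elementary and the substantive analytic work is already packaged in~\cite{WassersteinPaper}; the only real subtlety is verifying that the cited driver rates are for the \emph{pathwise} (supremum) Wasserstein distance defined in~\eqref{eq:general_wasserstein_trunk} rather than merely for time-$t$ marginals. This is the case, as the comonotonic coupling of~\cite{WassersteinPaper} is constructed pathwise and the associated upper bounds are obtained via BDGN/Doob-type control of the supremum of the difference process, so no further work is required to port them into the present setting.
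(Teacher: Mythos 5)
Your proof is correct and for $q\in(0,1)$ takes a cleaner route than the paper's own Proposition~\ref{prop:simple_lin_SDE_gen_bound}, which is what the corollary invokes. In the paper, the authors first pass to $L^q$-expectations, obtaining the self-referential inequality $\E[\|\Delta\Y\|_{[0,t]}^q]\le|\Delta\bm y|^q+\E[\|\Delta\bm Y\|_{[0,t]}^q]+(Kt)^q\E[\|\Delta\Y\|_{[0,t]}^q]$. This is no longer an integral inequality in $t$, so continuous Gr\"onwall is unavailable, and they resort to a discrete, iterated version over $N=\lceil TK2^{1/q}\rceil$ subintervals, yielding the constant $N(N+1)2^{N-1}$. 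You instead run Gr\"onwall \emph{pathwise}, at the level of the running supremum $\Phi(s)=\sup_{u\le s}|\X_t'(u)-\ZZ'(u)|$. This is legitimate: $\Phi$ is non-decreasing and a.s.\ bounded on $[0,T]$ (as the running supremum of a difference of c\`adl\`ag paths), so the classical integral form of Gr\"onwall applies $\omega$-by-$\omega$ and gives $\Phi(T)\le e^{KT}\,\Xi(T)$. Raising to the $q$-th power and taking expectations only at the very end then produces the uniform constant $e^{qKT}$ for all $q\in(0,1]$ at once. For the $\Oh$-statement in the corollary the two constants are interchangeable, but your argument is genuinely simpler and yields a sharper explicit bound. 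Two minor remarks. First, a citation mismatch: the DoNA rate under Assumption~(\nameref{asm:C}) comes from~[Prop.~2.3] of~\cite{WassersteinPaper} (comonotonic coupling), not~[Thm~2.2] (thinning coupling, which the paper mentions only in the remark \emph{after} the corollary as an alternative route). Second, your closing sanity check --- that the driver rates of~\cite{WassersteinPaper} are for the pathwise supremum Wasserstein distance rather than for time-$t$ marginals --- is exactly the right thing to worry about and is what the paper's proof tacitly relies on when feeding those bounds into the uniform distance~\eqref{eq:general_wasserstein_trunk}.
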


\begin{proof}
First, we note from Proposition~\ref{prop:simple_lin_SDE_gen_bound} for all $q \in (0,1]\cap(0,\alpha)$, that
\begin{equation}
\label{eq:Oh_wasserstein_drivers}
\mW_q(\X_t,\ZZ) = \Oh(\mW_q(\bm{X}_t,\bm{Z})), \quad \text{ for all }T>0, \text{ as }t \da 0.
\end{equation}
The specified convergence rates require Assumption~(\nameref{asm:C}) to hold. Assume that $\alpha \in (0,2)\setminus\{1\}$, and let $q \in (0,1]\cap(0,\alpha)$ and couple $\bm{X}_t$ and $\bm{Z}$ through the comonotonic coupling. The claims then follow directly in both cases from~\eqref{eq:Oh_wasserstein_drivers} together with~\cite[Prop.~2.3 \& Thm~2.5]{WassersteinPaper}.
\end{proof}

\begin{remark}
(a) In Corollary~\ref{cor:additive_SDE}, we imposed the assumptions required for the comonotonic coupling, namely, Assumption~(\nameref{asm:C}). It would have also been possible to instead impose Assumption~(\nameref{asm:T}) and employ the thinning coupling. In this case, by invoking~\eqref{eq:Oh_wasserstein_drivers} and~\cite[Thm~2.2]{WassersteinPaper}, the result follows for DoNA. Although the corresponding bounds on $\mW_q(\bm{X}_t, \bm{Z})$ are not explicitly given in~\cite{WassersteinPaper} for DoNNA under Assumption~(\nameref{asm:T}), they can be obtained from~\cite[Prop.~3.2]{WassersteinPaper} and Proposition~\ref{prop:integrals_DoA_thin} below.\\
(b) The methods used in Corollary~\ref{cor:additive_SDE} depend crucially on the fact that we have an SDE with additive noise, and the methods cannot be generalised to hold in the general case of~\eqref{eq:main_SDE_setting}. Indeed, in the case of SDEs with additive noise, the error is essentially an aggregate of the errors of the drivers. However, in the context of our more general SDEs, a minor error in the drivers, coupled with a rotation from the function $V$, can lead to divergent paths for the solutions, see Figure~\ref{fig:trajectory_SDEs}. Hence, more care is required, which leads to many difficulties and pitfalls that are discussed in greater detail in Section~\ref{sec:failed_ideas} below.
\end{remark}

\subsection{Augmented \texorpdfstring{$\alpha$}{alpha}-stable Drivers}\label{subsec:augment_stable}

Here, the general class of \emph{augmented $\alpha$-stable processes} from~\cite[Sec.~2.5]{WassersteinPaper} is recalled and our main results from Section~\ref{sec:main} are applied. Augmented $\alpha$-stable processes (Definition~\ref{def:augmented-stable} below) form a rich and flexible class of L\'evy processes that arise naturally in numerous applications such as finance, physics, and engineering. As highlighted in Example~\ref{ex:subclass_augmented_stable}, this class encompasses a wide variety of important models such as Rosi\'nski's tempered stable processes, truncated stable processes, and many meromorphic L\'evy processes. Their relevance stems from the fact that they may modify the heavy-tailed behaviour and preserve most of the small-jump activity of stable processes while allowing for tractable modifications in the tail or directional structure via the tempering function~$\calQ$. %Given their generality and applicability, it is tractable to quantify the difference between two augmented stable processes in the Wasserstein distance $\mW_p$ (for $p=1$ and $p=2$). 

\begin{defin}
\label{def:augmented-stable}    
A L\'evy process $\bm{X}=(\bm{X}(t))_{t \in [0,1]}$ is an augmented $\alpha$-stable process if it has no Gaussian component, and, for some $\alpha\in(0,2]$, its L\'evy measure $\nu_{\bm{X}}$ has the form
\begin{equation} \label{eq:temp_stab_alpha}
\nu_{\bm{X}}(A)
= \int_0^\infty\int_{\Sp^{d-1}}  \1_{A}(x\bm{v}) \calQ(x,\bm{v})  \sigma(\D \bm{v})x^{-\alpha-1}\D x, \quad \text{ for } A \in \mathcal{B}(\R^d_{\bm{0}}),
\end{equation}
where $\sigma$ is a probability measure on $\mathcal{B}(\Sp^{d-1})$ and $\calQ:(0,\infty)\times \Sp^{d-1} \mapsto [0,\infty)$ is a measurable function satisfying $\calQ(\cdot,\bm{v})\in\SV_0$ for $\bm{v}\in\Sp^{d-1}$. We will refer to $\calQ$ as the \emph{augmenting} function of $\bm{X}$.
\end{defin}

For $\nu_{\bm{X}}$ to be a proper L\'evy measure, it is necessary that $\int_0^\infty (x^2\wedge 1)\calQ(x,\bm{v})x^{-\alpha-1}\D x<\infty$ for $\sigma$-a.e. $\bm{v}\in\Sp^{d-1}$. While the class of augmented stable processes could be extended beyond the requirement that $\calQ(\cdot,\bm{v})\in\SV_0$ for all $\bm{v}\in\Sp^{d-1}$, our definition is motivated by the characterisation in Theorem~(\nameref{thm:small_time_domain_stable}) below of the small-time DoA of stable processes, and hence we impose this restriction on $\calQ$.

\begin{corollary}
\label{cor:augmented_stable_rates}
Let $\alpha \in (0,2)\setminus\{1\}$, and assume that $\bm{X}$ is an augmented $\alpha$-stable process with L\'evy measure $\nu_{\bm{X}}$ given as in~\eqref{eq:temp_stab_alpha}, and $\bm{Z}$ is an $\alpha$-stable process with L\'evy measure $\nu_{\bm{Z}}$ given as in~\eqref{eq:mu_measure_defn}. Then, in the following cases, for every fixed $T>0$, there exist couplings between $\bm{X}_t$ and $\bm{Z}$ for which the solutions $\X_t$ and $\ZZ$ to the SDEs given in~\eqref{eq:main_SDE_setting}, driven by $\bm{X}_t\eqd(\bm{X}(st)/g(t))_{s\in[0,1]}$ and $\bm{Z}$ respectively, make the family $\{f(t)^{-1}\|\X_t-\ZZ\|_{[0,T]}\}_{t\in(0,1]}$ tight, where $f,g$ are specified below.

\noindent{\nf{\textbf{(a) (DoNA)}}}
%\noindent\nf{(a)} 
If $|\calQ(x,\bm{v})-c_\alpha| \le K(1\wedge x^p)$ for some $K,p>0$ and all $x>0$, $\bm{v}\in \Sp^{d-1}$, then
\[
g(t)\coloneqq t^{1/\alpha},
\quad\text{and}\quad
f(t)\coloneqq
\begin{cases}
t^{(p/\alpha)\wedge (1-1/\alpha) }, &\alpha\in(1,2)\text{ and \eqref{eq:symmetry_thin} fails}, \\
t^{p/\alpha}, &\alpha\in(1,2)\text{ and \eqref{eq:symmetry_thin} holds or $\alpha\in(0,1)$}.
\end{cases}
\]

\noindent{\nf{\textbf{(b) (DoNNA)}}} Assume $\calQ(x,\bm{v})\!=\! c_\alpha H(x)$ for all $x>0$, $\bm{v}\in \Sp^{d-1}$ and some monotone $H\in\SV_0\cap C^1$. 

\leftskip5mm\noindent{\nf{\textbf{(i)}}} If $H\in\CSV_0(H_1,H_2)$, $H_2\in\SV_0$, $t\mapsto tH(t)^{-1/\alpha}$ is increasing with inverse $t\mapsto tG(t^\alpha)$ for some $G\in\SV_0$, then 
\[
g(t)\coloneqq t^{1/\alpha}G(t), \quad \text{ and } \quad f(t)\coloneqq
H_2(g(t))^{1/\lceil\alpha\rceil}.
\]

\noindent{\nf{\textbf{(ii)}}} Set $\varrho(x)\coloneqq c_\alpha\int_x^\infty H(y) y^{-\alpha-1}\D y$ and let $\varrho^{\la}$ be its right-continuous inverse. Suppose $G:x\mapsto H(\varrho^{\la}(1/x))^{1/\alpha}\in \CSV_0(G_1,G_2)$, for some function $G_2\in\SV_0$. Then $g(t)\coloneqq t^{1/\alpha}G(t)$ and $f=G_2$.
\end{corollary}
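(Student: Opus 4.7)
The strategy is to verify, in each case, that $\bm{X}$ satisfies the hypotheses of one of the main theorems and then read off the rate $f(t)$ directly from its statement.

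For part (a), take the trivial decomposition $\nu_{\bm{X}^\D}\equiv 0$ with $H\equiv 1$. The radial form~\eqref{eq:temp_stab_alpha} of $\nu_{\bm{X}}$ together with~\eqref{eq:mu_measure_defn} yields $\nu_{\bm{X}}(\D\bm{w})=h(\bm{w})\nu_{\bm{Z}}(\D\bm{w})$ with $h(x\bm{v})\coloneqq\calQ(x,\bm{v})/c_\alpha$. The hypothesis $|\calQ(x,\bm{v})-c_\alpha|\le K(1\wedge x^p)$ translates to $|1-h(\bm{w})|\le (K/c_\alpha)(1\wedge|\bm{w}|^p)$, which is exactly~\eqref{eq:TNN}. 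Assumption~(\nameref{asm:T}) therefore holds with $H\equiv 1$, and Theorem~\ref{thm:main_res_T} (DoNA) gives the stated $f$. Part (b)(i) is similarly immediate: $\calQ(x,\bm{v})=c_\alpha H(x)$ yields $\nu_{\bm{X}}(\D\bm{w})=H(|\bm{w}|)\nu_{\bm{Z}}(\D\bm{w})$, so Assumption~(\nameref{asm:T}) holds with $h\equiv 1$, $K_h=0$, the specified $H\in\CSV_0(H_1,H_2)$, and the required monotonicity of $t\mapsto tH(t)^{-1/\alpha}$. The DoNNA branch of Theorem~\ref{thm:main_res_T} then produces $f(t)=H_2(g(t))^{1/\lceil\alpha\rceil}$ with $g(t)=t^{1/\alpha}G(t)$.

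For part (b)(ii), we verify Assumption~(\nameref{asm:C}). Since $\calQ(x,\bm{v})=c_\alpha H(x)$ does not depend on $\bm{v}$, the radial tail $\rho_{\bm{X}^\co}([x,\infty),\bm{v})=\varrho(x)$ is $\bm{v}$-independent. To match the form $h(x\bm{v})\wt{H}(x)x^{-\alpha}$ required in~\eqref{eq:radial_tail_decomp_C}, set $h\equiv c_\alpha/\alpha$ and $\wt{H}(x)\coloneqq\alpha x^\alpha\varrho(x)/c_\alpha=\alpha\int_1^\infty H(xu)u^{-\alpha-1}\D u$ (via the substitution $y=xu$). Then $\wt{H}\in\SV_0$, $\wt{H}(x)=1$ for $x\ge 1$ (since $\varrho(x)=c_\alpha/(\alpha x^\alpha)$ there), $\wt{H}$ inherits monotonicity from $H$ through the last integral representation, and Karamata's theorem yields $\wt{H}\sim H$ as $x\da 0$. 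Consequently, $\wt{H}(\varrho^\la(1/x))^{1/\alpha}$ and the $G$ defined in the corollary lie in the same $\CSV_0(G_1,G_2)$ class. The first bound in~\eqref{eq:old_assump_(H)_C} is trivial with $K_h=0$, and upon identifying $G$ with $x\mapsto\wt{H}(\varrho^\la(1/x))^{1/\alpha}$, the second bound is trivial with $K_Q=0$. Theorem~\ref{thm:main_res_C} (DoNNA) now applies and yields $f=G_2$.

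The main technical point is in part (b)(ii), namely the passage between the $H$ appearing in the corollary's definition of $G$ and the auxiliary $\wt{H}$ required by Assumption~(\nameref{asm:C}): one must check that replacing $H$ by the asymptotically equivalent $\wt{H}$ inside $y\mapsto y^{1/\alpha}$ composed with $\varrho^\la(1/\cdot)$ preserves the $\CSV_0(G_1,G_2)$ class (possibly up to harmless modifications of the controlling functions). Everything else is a direct dictionary between the corollary's hypotheses and those of the main theorems.
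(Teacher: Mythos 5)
Part (a) contains a genuine error. You verify that Assumption~(\nameref{asm:T}) holds (which it does, via $h(x\bm{v})=\calQ(x,\bm{v})/c_\alpha$), and then invoke Theorem~\ref{thm:main_res_T}, claiming it "gives the stated $f$". It does not: for $\alpha\in(1,2)$ the DoNA branch of Theorem~\ref{thm:main_res_T} yields $f(t)=t^{p/(\lceil\alpha\rceil\alpha)}=t^{p/(2\alpha)}$ when the balancing condition holds (and $t^{(p/(2\alpha))\wedge(1-1/\alpha)}$ when it fails), whereas the corollary asserts $t^{p/\alpha}$ (resp.\ $t^{(p/\alpha)\wedge(1-1/\alpha)}$) — the thinning exponent is off by a factor of $\lceil\alpha\rceil=2$. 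The paper derives part (a) from Theorem~\ref{thm:main_res_C}, i.e.\ the comonotonic coupling, whose DoNA rate is indeed $t^{p/\alpha}$. To repair your argument you must instead verify Assumption~(\nameref{asm:C}): with $\nu_{\bm{X}^\D}\equiv 0$, $H\equiv 1$ and $h(x\bm{v})\coloneqq x^\alpha\int_x^\infty\calQ(y,\bm{v})y^{-\alpha-1}\D y$, one needs the bound $|c_\alpha/\alpha-h(x\bm{v})|\lesssim 1\wedge x^p$ to follow from $|\calQ-c_\alpha|\le K(1\wedge x^p)$. Only for $\alpha\in(0,1)$, where $\lceil\alpha\rceil=1$, does your route happen to reproduce the corollary's exponent.

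Parts (b)(i) and (b)(ii) do route through the theorems the paper uses. But in (b)(ii) the point you relegate to "one must check" at the end is a real gap, not a harmless loose end: establishing $\wt H\sim H$ at $0$ via Karamata does not by itself place $\wt H(\varrho^{\la}(1/\cdot))^{1/\alpha}$ in the same class $\CSV_0(G_1,G_2)$ as $H(\varrho^{\la}(1/\cdot))^{1/\alpha}$; as Remark~\ref{rem:CSV}(iii) explicitly cautions, asymptotically equivalent slowly varying functions can have genuinely different controlling functions. The alternative is to keep the corollary's $H$ as the $H$ of Assumption~(\nameref{asm:C}), so that its $G$ literally equals the corollary's $G$ and the second bound in~\eqref{eq:old_assump_(H)_C} is trivially $0$; but then the first bound must be checked for $h(x\bm{v})=\varrho(x)x^\alpha/H(x)$, which decays to $c_\alpha/\alpha$ only at a slowly varying rate, not polynomially. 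Either way, this step needs a concrete argument, and the proposal leaves it unresolved.
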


\begin{proof}
Parts (a) and (b-ii) follow from Theorem~\ref{thm:main_res_C} and Part (b-i) follows from Theorem~\ref{thm:main_res_T}.
\end{proof}

\begin{remark}
\label{rem:G2-vs-H2}
Given the similarity in the assumptions of Parts (b-i) and (b-ii) in Corollary~\ref{cor:augmented_stable_rates}, it is natural to compare both cases:\\
(a) First, we show that the slowly varying functions $G^\mft$ and $G^\mfc$ from the thinning coupling and comonotonic coupling, respectively, are asymptotically equivalent. Let $G^\mft$ be as in Part (b-i), so that $s\mapsto sG^\mft(s^\alpha)$ is the inverse of map $s\mapsto sH(s)^{-1/\alpha}$ and, by~\eqref{eq:G-H}, $G^\mft(s)=H(g^\mft(s))^{1/\alpha}$ (where $g^\mft(s)=s^{1/\alpha}G^\mft(s)$). Then, setting $k_\alpha\coloneqq c_\alpha/\alpha$, by Karamata's theorem~\cite[Thm~1.5.11]{MR1015093}, $\varrho(x)\sim k_\alpha H(x)x^{-\alpha}$ and hence $\varrho^\la(1/x)\sim (k_\alpha x)^{1/\alpha}G^\mfc(k_\alpha x)=g^\mfc(k_\alpha x)\sim k_\alpha^{1/\alpha}g^\mfc(x)$ as $x\da 0$. Thus, 
\[
G^\mfc(x)
=H(\varrho^\la(1/x))^{1/\alpha}
\sim H(g^\mfc(k_\alpha x))^{1/\alpha}
\sim H(g^\mfc(x))^{1/\alpha}, 
\quad \text{ as }x \da 0.
\]
Then, taking $x=s^\alpha$ and doing elementary manipulations, we deduce that
\[
s G^\mfc(s^\alpha)H(s G^\mfc(s^\alpha))^{-1/\alpha}
\sim s,\quad s\da 0,
\]
which shows that $s\mapsto s G^\mfc(s^\alpha)$ is an asymptotic inverse of $s\mapsto sH(s)^{-1/\alpha}$ and hence, asymptotically equivalent to $s\mapsto s G^\mft(s^\alpha)$. This give the claim: $G^\mfc(s)\sim G^\mft(s)$ as $s\da 0$.

(b) We now explore the relationship between $H_2\circ g$ and $G_2$ whenever both $H\in\CSV_0(H_1,H_2)$ as in Part (b-i) and $G\in\CSV_0(G_1,G_2)$ as in Part (b-ii). For simplicity, to reduce confusion in the definition of $g$ and by virtue of Remark~\ref{rem:G2-vs-H2}(a), we assume $G$ is as in~\eqref{eq:G-H}. If $H$ and $G$ are both $C^1$ such that $t\mapsto t|H'(t)|$ and $t\mapsto t|G'(t)|$ are both slowly varying at $0$, then, by~\cite[Lem.~6.1]{WassersteinPaper}, $H_2$ and $G_2$ are larger than some multiples of $\wt H:t\mapsto t|H'(t)|/H(t)$ and $\wt G:t\mapsto t|G'(t)|/G(t)$, respectively. If $g'$ is eventually monotone, the Monotone Density Theorem~\cite[Thm~1.7.2]{MR1015093} gives $t g'(t)\sim \alpha^{-1}g(t)$ as $t\da0$ and hence (since $G(t)=H(g(t))^{1/\alpha}$) 
\[
\frac{t\cdot G'(t)}{G(t)}
=\frac{tg'(t)\cdot H'(g(t))}{\alpha H(g(t))}
\sim \frac{g(t)\cdot H'(g(t))}{\alpha^2 H(g(t))}
\quad\implies\quad
\wt G(t)\sim \alpha^{-2}\wt H(g(t))
\quad\text{as }t\da 0.
\]
This suggests that the bounds in Part (b-ii) are sharper than those of Part (b-i) by a factor of $\wt G^{1/2}$ whenever $\alpha>1$, both assumptions hold, and $G_2$ and $H_2$ are proportional to $\wt G$ and $\wt H$, respectively.
\end{remark}

The class of augmented $\alpha$-stable processes in our context is a broad class of processes as demonstrated by the ensuing examples (see a full discussion of the class in~\cite[Ex.~2.16]{WassersteinPaper}). 

\begin{example}[{Augmented stable L\'evy processes~\cite[Ex.~2.16]{WassersteinPaper}}]
\label{ex:subclass_augmented_stable}
We summarise how various classes of L\'evy processes in the literature relate to the class of augmented stable processes.
\begin{itemize}[leftmargin=1em, nosep]
\item \textbf{Rosi\'nski's tempered stable processes}~\cite[p.~680]{MR2327834}, including the exponentially tempered stable processes, admit the decomposition in~\eqref{eq:temp_stab_alpha} but require $\calQ(\cdot,\bm{v})$ to be completely monotone (i.e., strictly decreasing and convex) with $\calQ(x,\bm{v}) \to 0$ as $x\to\infty$, instead of merely being slowly varying. For instance, a $\beta$-stable process is also a Rosi\'nski tempered $\alpha$-stable process if $\beta>\alpha$, making the index $\alpha$ non-unique~\cite[p.~680]{MR2327834}. Exponentially tempered stable processes are both augmented and Rosi\'nski tempered with $\calQ(x,\bm{v}) = \exp(-\lambda(\bm{v})x)$ for some measurable $\lambda:\Sp^{d-1}\to[0,\infty)$.
\item \textbf{Truncated stable processes} are augmented stable with $\calQ(x,\bm{v}) = \1_{\{x \le c(\bm{v})\}}$ for some measurable $c:\Sp^{d-1}\to(0,\infty)$. These are not tempered stable under Rosi\'nski’s definition.
\item \textbf{Kuznetsov’s $\beta$-class of L\'evy processes}~\cite{MR2724421} extends naturally to $\R^d$ by introducing a common index in all directions, resulting in a subclass of augmented stable processes with $\calQ(x,\bm{v}) = c(\bm{v}) \e^{-\lambda(\bm{v})\beta(\bm{v})x} x^{-\alpha-1}(1 - \e^{-\beta(\bm{v})x})^{-\alpha-1}$, where $c, \beta, \lambda: \Sp^{d-1} \to (0,\infty)$ are measurable functions.
\item \textbf{Meromorphic L\'evy processes} on $\R$~\cite{MR2977987} also fall under the augmented stable class. By~\cite[Cor.~3]{MR2977987}, their L\'evy density takes the form
\[
\pi(x)
=\1_{\{x>0\}}\sum_{n\in\N}a_n\rho_n\e^{-\rho_n x}
+\1_{\{x<0\}}\sum_{n\in\N}\hat a_n\hat\rho_n\e^{\hat\rho_n x},
\quad x\in\R\setminus\{0\},
\]
where the coefficients satisfy specific regularity conditions. If $\rho_n = \hat\rho_n = n$, then by Karamata’s Tauberian theorem~\cite[Cor.~1.7.3]{MR1015093}, the process is augmented $\alpha$-stable if and only if $\sum_{n=1}^{\lfloor x\rfloor} n a_n$ and $\sum_{n=1}^{\lfloor x\rfloor} n \hat a_n$ are both regularly varying at infinity with index $\alpha + 1$. 
\qedhere
\end{itemize}
\end{example}

We will now see some straightforward applications of Corollary~\ref{cor:augmented_stable_rates} to some of these examples.

\begin{example}[Corollary~\ref{cor:augmented_stable_rates} in the case of (classical) exponentially tempered stable processes]\label{ex:exp_temp_stable}
Let $\bm{X}$ and $\bm{Z}$ be as in Corollary~\ref{cor:augmented_stable_rates}(a) with $\calQ(x,\bm{v})=c_\alpha\e^{-\lambda(\bm{v})x}$ for all $x>0$, $\bm{v}\in \Sp^{d-1}$, where $\lambda:\Sp^{d-1}\to[0,\infty)$ is bounded. Thus, $\bm{X}$ is an exponentially tempered stable process and by~\cite[Ex.~2.17]{WassersteinPaper}, it follows $|\e^{-\lambda(\bm{v})x}-1|
\le K(1 \wedge x)$, for all $(x,\bm{v})\in (0,\infty)\times \Sp^{d-1}$, where $K=\sup_{\bm{w}\in\Sp^{d-1}}|\lambda(\bm{w})|$. Hence, by Corollary~\ref{cor:augmented_stable_rates}(a), the family $\{f(t)^{-1}\|\X_t-\ZZ\|_{[0,T]}\}_{t\in(0,1]}$ is tight for all $T>0$, where 
\[
f(t)\coloneqq
\begin{cases}
t^{1-1/\alpha }, &\alpha\in(1,2)\text{ and \eqref{eq:symmetry_thin} fails}, \\
t^{1/\alpha}, &\alpha\in(1,2)\text{ and \eqref{eq:symmetry_thin} holds or $\alpha\in(0,1)$}.
\end{cases}\qedhere
\]
\begin{comment}
    f(t)\coloneqq
\begin{cases}
t^{(1/(2\alpha))\wedge (1-1/\alpha) }, &\alpha\in(1,2)\text{ and \eqref{eq:symmetry_thin} fails}, \\
t^{1/(2\alpha)}, &\alpha\in(1,2)\text{ and \eqref{eq:symmetry_thin} holds},\\
 t^{1/\alpha}
 , &\alpha\in(0,1).
\end{cases}
\end{comment}
\end{example}

Next, we give an example (inspired by~\cite[Ex.~2.18]{WassersteinPaper}) in DoNNA where the function $G$ is not asymptotically constant, and see how the rates can deteriorate to become slower than a power of $\log$. 

\begin{example}[Augmented $\alpha$-stable processes with arbitrarily slow convergence rate]
\label{ex:arbitrarily_slow} Let $\bm{X}$ and $\bm{Z}$ be as in Corollary~\ref{cor:augmented_stable_rates}(b-ii). Let $\ell_n$ be recursively defined as: $\ell_1(t)\coloneqq \log(e+t)$ and $\ell_{n+1}(t)=\log(e+\ell_n(t))$ for $t>0$. If either $H(x)=\ell_n(1/x)$ or $H(x)=\ell_n(1/x)^{-1}$ for some $n\in\N$ (i.e. $G(x)\sim\ell_n(x^{-1/\alpha})$ or $G(x)\sim\ell_n(x^{-1/\alpha})^{-1}$ as $x\to\infty$), then~\cite[Lem.~6.3]{WassersteinPaper} shows that for small $t>0$ that $$G_2(t)\coloneqq \prod_{k=1}^n(e+\ell_k(1/t))^{-1}\ge |1-G(2t)/G(t)|/\log 2.$$ Moreover, by~\cite[Lem.~6.1]{WassersteinPaper}, by~\cite[Lem.~6.1]{WassersteinPaper}, it follows that $G \in \CSV_0(G_1,G_2)$ for $G_2$ defined in the equation above, and $G_1\equiv \log 2$. Thus, by Corollary~\ref{cor:augmented_stable_rates}(b-ii), the family $\{f(t)^{-1}\|\X_t-\ZZ\|_{[0,T]}\}_{t\in(0,1]}$ is tight, where $g(t)\coloneqq t^{1/\alpha}G(t)$ and $f(t)=G_2(t) =\prod_{k=1}^n(e+\ell_k(1/t))^{-1}$. Thus, even though the function $\ell_n$ is ``nearly constant'' for large $n\in\N$, the convergence rate is slower than $\log(1/t)^{-1-\ve}$ for any $\ve>0$ for $\alpha \in (0,2)\setminus\{1\}$. 
\end{example}

\begin{remark}
Under suitable $L^p$-integrability conditions, the bounds in Theorems~\ref{thm:gen_bound_thin} and~\ref{thm:gen_bound_como} can be leveraged to obtain explicit estimates on the $L^p$-Wasserstein distance between two augmented $\alpha$-stable processes. These integrability conditions are, for instance, satisfied when the processes admit exponential moments. This includes Rosi\'nski's exponentially tempered stable processes, which are characterised by the tempering function $\calQ(x,\bm{v}) = \e^{-\lambda(\bm{v})x}$ for $x > 0$ and $\bm{v} \in \Sp^{d-1}$, where $\lambda: \Sp^{d-1} \to (0,\infty)$ is bounded and strictly positive.

In this context, both Theorems~\ref{thm:gen_bound_thin} and~\ref{thm:gen_bound_como} reveal that the dominant term in the Wasserstein bound arises from the difference $\calQ_1(x,\bm{v}) - \calQ_2(x,\bm{v})$ for $x>0$ and $\bm{v} \in \Sp^{d-1}$. Therefore, deriving sharp bounds on $\mW_p(\Y_1, \Y_2)$ (for $p = 1$ or $2$) effectively reduces to accurately quantifying the discrepancy between the corresponding tempering functions.
\end{remark}

\section{General Bounds}
\label{sec:general_bounds}
In this section, we utilise thinning and comonotonic couplings to derive general $ L^p$-bounds on the distance between solutions to SDEs driven by general L\'evy processes. For a L\'evy process $\bm{Y}_i$ on $\R^d$, let $\Y_i$ be the $\R^m$-valued solution to the L\'evy driven SDE, given by
\begin{equation}
\label{eq:SDE_setting}
\Y_i(t)\coloneqq 
    \bm{x}_i+\int_0^t V(\Y_i(r-))\D \bm{Y}_i(r), 
\quad \text{ for }\bm{x}_i \in \R^m ,
    \text{  }i\in\{1,2\},
\end{equation} and a Lipschitz continuous function $V:\R^m \to \R^{m\times d}$ with Lipschitz constant $K \in (0,\infty)$. Additionally, for convenience, we denote $\V_i\coloneqq V(\Y_i)$ for $i\in\{1,2\}$. Recall, that the existence of a unique solution to these SDEs, is ensured by~\cite[Thm~6.2.3]{Applebaum_2009}. If the process $\bm{Y}_i$ has finite second-moment, a Bellman--Gr\"onwall argument similar to the one we will use ourselves (see, e.g.~\cite[Thm~67, p.~342]{MR2273672})\footnote{Note that only the Lipschitz property (and not the differentiability) is required in the proof of~\cite[Thm~67, p.~342]{MR2273672}.}, implies the existence of a finite function $C_2(\Y_2;\cdot)$ satisfying $\E[\|\Y_2\|^2_{[0,t]}]\le C_2(\Y_2;t)$ for all $t\ge 0$. Similarly, if $\bm{Y}_2$ has finite variation (including $\bm{\Sigma}_2=\bm{0}$) and a finite first moment, there exists a finite function $C_1(\Y_2;\cdot)$ such that $\E[\|\Y_2\|_{[0,t]}] \le C_1(\Y_2;t)$ for all $t\ge 0$. However, since the dependence of the functions $C_j$ on the characteristics of $\bm{Y}_2$ is not made explicit, we derive such a result ourselves in Theorem~\ref{thm:gen_bound_thin} below with an explicit dependence.

Theorems~\ref{thm:gen_bound_thin} and~\ref{thm:gen_bound_como} below both imply bounds on the Wasserstein distance of solutions of SDEs. However, their current phrasing contains more information: they spell out exactly what coupling attains those bounds.

\subsection{Thinning Coupling}\label{sec_thinning_coup}
For $i=1,2$, let $\Xi_i$ be the (Poisson) jump measure of  $\bm{Y}_i$ on $\R_+\times \R^d_{\bm{0}}$ with corresponding compensated measure $\wt\Xi_{i}=\Xi_{i}-\Leb\otimes\nu_{i}$, where $\Leb$ denotes the Lebesgue measure on $\R_+$. Then the L\'evy--It\^o decomposition reads $\bm{Y}_i(t)=\bm{\gamma}_{i} t+\bm{\Sigma}_{i}\bm{B}_{i}(t)+\bm{S}_{i}(t)+\bm{L}_{i}(t)$, where $\bm{B}_i$ is a standard Brownian motion on $\R^m$,
\begin{equation}
\label{eq:thinning_coupling_S_j_L_j}
\bm{S}_i(t)
\coloneqq\int_{(0,t]\times D} \bm{w}\,\wt\Xi_i(\D s,\D\bm{w}),
\qquad
\bm{L}_i(t)
\coloneqq\int_{(0,t]\times D^\co}\bm{w}\,\Xi_i(\D s,\D\bm{w}),
\qquad D^\co\coloneqq \R^d_{\bm0} \setminus D,
\end{equation}
and the processes $\bm{B}_i$, $\bm{S}_i$ and $\bm{L}_i$ are independent. Note that $\bm{S}_i$ is an $L^2$-martingale and $\bm{L}_i$ is a compound Poisson process. Further note that, if both processes have jumps of finite variation, then they both admit the alternative L\'evy--It\^o decomposition $\bm{Y}_i(t)=\bm{b}_{i} t+\bm{\Sigma}_{i}\bm{B}_{i}(t)+\bm{J}_{i}(t)$, $t\ge 0$, where
\begin{equation}
\label{eq:thinning_coupling_FV}
\bm{b}_i
\coloneqq\bm{\gamma}_i-\int_{D}\bm{w}\,\nu_i(\D\bm{w}),
\qquad
\bm{J}_i(t)
\coloneqq\int_{(0,t]\times \R^d_{\bm 0}} \bm{w}\,\Xi_i(\D s,\D\bm{w}),
\quad \text{ for }i\in \{1,2\} \text{ and all }t\ge 0.
\end{equation} Similarly, if both processes have a finite first moment (and possibly infinite variation), the processes also admit the L\'evy--It\^o decomposition $\bm{Y}_i(t)=\bm{a}_i t+\bm\Sigma_i\bm{B}_i(t)+\bm{M}_i(t)$, where
\begin{equation}
\label{eq:thinning_coupling_FM}
\bm{a}_i
\coloneqq\bm{\gamma}_i+\int_{D^\co}\bm{w}\,\nu_i(\D\bm{w}),
\qquad
\bm{M}_i(t)
\coloneqq\int_{(0,t]\times \R^d_{\bm 0}} \bm{w}\,\wt\Xi_i(\D s,\D\bm{w}),
\quad \text{ for }i\in \{1,2\} \text{ and all }t\ge 0.
\end{equation}

Taking $\bm B_1=\bm B_2\eqqcolon \bm B$, the problem of coupling $\bm{Y}_1$ and $\bm{Y}_2$ is reduced to coupling the Poisson random measures $\Xi_1$ and $\Xi_2$ as in~\cite[Sec.~3.1]{WassersteinPaper}. Choose any L\'evy measure $\nu$ on $\R^d_{\bm0}$ that dominates both $\nu_1$ and $\nu_2$ with Radon--Nikodym derivatives bounded by $1$ $\nu$-a.e., i.e.\ $f_1=\D \nu_1/\D\nu\le 1$ and $f_2=\D \nu_2/\D\nu\le 1$ $\nu$-a.e. (for instance, we may take $\nu=\nu_1+\nu_2$). Let $\Xi=\sum_{n\in\N} \delta_{(U_n,\bm{\upsilon}_n)}$ be a Poisson random measure on $\R_+\times\R^d_{\bm 0}$, with mean measure $\Leb\otimes\nu$ and corresponding compensated measure $\wt\Xi=\Xi-\Leb\otimes\nu$. Consider an independent sequence $(\vartheta_n)_{n\in\N}$ of iid uniform random variables on $[0,1]$. By the Marking and Mapping Theorems~\cite{MR1207584}, we may couple $\Xi_1$ and $\Xi_2$ as thinnings of $\Xi$ as follows:
\begin{equation}\label{eq:thining_coupling_defn}
\Xi_1
=\sum_{n\in\N} \1_{\{\vartheta_n\le f_1(\bm{\upsilon}_n)\}}\delta_{(U_n,\bm{\upsilon}_n)},
\qquad\text{and}\qquad
\Xi_2
=\sum_{n\in\N} \1_{\{\vartheta_n\le f_2(\bm{\upsilon}_n)\}}\delta_{(U_n,\bm{\upsilon}_n)}.
\end{equation}

Recall that the BDGN inequality~\cite[Thm~1.1]{MR3463679} states that, for universal constants $0<c_0<C_0<\infty$ and any c\`{a}dl\`{a}g local martingale $\bm{M}$ on a separable Hilbert space (such as $\R^m$) with $\bm{M}_0=\bm{0}$, 
\begin{equation}
\label{eq:BDGN}
c_0\E\big[[\bm{M}]_t\big]
\le \E\bigg[\sup_{s\in[0,t]}|\bm{M}_s|^2\bigg]
\le C_0\E\big[[\bm{M}]_t\big],
\quad t>0,
\end{equation}
where $|\bm{M}|$ denotes the norm of $\bm{M}$ and $[\bm{M}]$ denotes its scalar quadratic variation, that is, the unique non-decreasing c\`{a}dl\`{a}g adapted process for which $|\bm{M}|^2-[\bm{M}]$ is a local martingale.

Given two vectors $\bm{v}_1,\bm{v}_2$ on any vector space and a measure $\mu$ on $\R^d_{\bm{0}}$, we denote $\Delta\bm{v}\coloneqq\bm{v}_1-\bm{v}_2$ and
\[
\mu(\varphi;p)
\coloneqq \int_{\R^d_{\bm 0}}|\bm{w}|^p|\varphi(\bm{w})|\mu(\D\bm{w}),
\qquad\text{for any }p\ge 0,
\enskip\text{ and measurable }\varphi.
\]
For the tuples $(\bm{x}_i,\bm{a}_i,\bm{\Sigma}_i,f_i)$, $i\in\{1,2\}$, and dominating measure $\nu$ as above, define $(\kappa_2^\mft,\eta_2)$ via
\begin{equation}
\label{eq:kappa_eta2}
\begin{split}
\kappa_2^\mft(t)
& \coloneqq 
6|\Delta\bm{x}|^2
+6tC_2(\V_2;t)\big[|\Delta\bm{a}|^2t
+2C_0\big(
|\Delta\bm\Sigma|^2
+\nu(\Delta f;2)\big)\big],\\ 
\eta_2(t)
&\coloneqq 6|\bm{a}_1|^2 K^2 t^2
+6C_0K^2\big(2|\bm{\Sigma}_1|^2
+\nu_1(1;2)
%+\int_{\R^d_{\bm 0}}|\bm{w}|^2\nu_1(\D\bm{w})
\big)t.
\end{split}
\end{equation}
When both processes are of finite variation (including $\bm{\Sigma}_i=\bm{0}$) with finite mean, we further define
\begin{equation}
\label{eq:kappa_eta1}
\kappa_1^\mft(t)
\coloneqq 
|\Delta\bm{x}|
+ t C_1(\V_2;t)\big(|\Delta\bm{b}|
+\nu(\Delta f;1)\big),
\qquad
\eta_1(t)
\coloneqq 
K\big(|\bm{b}_1|
+\nu_1(1;1)
%+\int_{\R^d_{\bm 0}}|\bm{w}|\nu_1(\D\bm{w})
\big)t.
\end{equation}

\begin{theorem}
\label{thm:gen_bound_thin}
Let $\Y_1$ and $\Y_2$ be as in~\eqref{eq:SDE_setting}, where the drivers $\bm{Y}_1$ and $\bm{Y}_2$ follow the thinning coupling and recall $(\kappa_1^\mft,\eta_1)$ and $(\kappa_2^\mft,\eta_2)$ from~\eqref{eq:kappa_eta2} \&~\eqref{eq:kappa_eta1}.\\
{\nf{(a)}} Suppose $\nu_2(1;2)=\int_{\R^d_{\bm 0}}|\bm{w}|^2\nu_2(\D\bm{w})<\infty$, then $\E[\|\Y_2\|^2_{[0,t]}]\le C_2(\Y_2;t)$ and $\E[\|\V_2\|^2_{[0,t]}]\le C_2(\V_2;t)$ for all $t\ge 0$, where $C_2(\V_2;t)\coloneqq 2(|V(\bm{x}_2)|^2+KC_2(\Y_2;t))$ and 
\begin{equation}
\label{eq:C_2(Y_2)}
C_2(\Y_2;t)
\coloneqq
6t|V(\bm{x}_2)|^2\big[|\bm{a}_2|^2t
    +2C_0\big(
    |\bm\Sigma_2|^2
    +\nu_2(1;2)\big)\big]\e^{6|\bm{a}_2|^2t^2
+6C_0K^2(2|\bm{\Sigma}_2|^2
+\nu_2(1;2)) t}.
\end{equation}
If, moreover, $\nu(f_1+f_2;2)=\int_{\R^d_{\bm 0}}|\bm{w}|^2(\nu_1+\nu_2)(\D\bm{w})<\infty$, then
\begin{equation}
\label{eq:ER_T_bound_with_GB}
\E\big[\|\Delta\Y\|_{[0,t]}^2\big] 
\le \kappa_2^\mft(t)\e^{\eta_2(t)},
\quad \text{for any }t\ge 0.
\end{equation}
{\nf{(b)}} 
Suppose $\bm{\Sigma}_2=\bm{0}$ and $\nu_2(1;1)=\int_{\R^d_{\bm 0}}|\bm{w}|\nu_2(\D\bm{w})<\infty$, then we have $\E\|\Y_2\|_{[0,t]}\le C_1(\Y_2;t)$ and $\E\|\V_2\|_{[0,t]}\le C_1(\V_2;t)$ for all $t\ge 0$, where $C_1(\V_2;t)\coloneqq |V(\bm{x}_2)|+KC_1(\Y_2;t)$ and 
\begin{equation}
\label{eq:C_1(Y_2)}
C_1(\Y_2;t)
\coloneqq 
t|V(\bm{x}_2)| \big(|\bm{b}_2|
    +\nu_2(1;1)\big)\e^{K(|\bm{b}_2|
+\nu_2(1;1))t}.
\end{equation}
If, moreover, $\bm{\Sigma}_1=\bm{\Sigma}_2=\bm{0}$ and $\nu(f_1+f_2;1)=\int_{\R^d_{\bm 0}}|\bm{w}|(\nu_1+\nu_2)(\D\bm{w})<\infty$, then
\begin{equation}
\label{eq:ER_T_bound_with_GB-FV}
 \E\big[\|\Delta\Y\|_{[0,t]}\big] 
 \le \kappa_1^\mft(t)\e^{\eta_1(t)},
 \quad\text{for any }t\ge 0.
\end{equation}
\end{theorem}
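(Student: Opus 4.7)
Because the bounds on $\E[\|\Y_2\|_{[0,t]}^2]$ and $\E[\|\V_2\|_{[0,t]}^2]$ are simpler prototypes of the main estimate, I would first dispatch them. Writing the L\'evy--It\^o decomposition $\bm{Y}_2=\bm{a}_2\cdot+\bm{\Sigma}_2\bm{B}+\bm{M}_2$ (valid since $\nu_2(1;2)<\infty$ forces the first-moment integral over $D^\co$ to be finite), I would square the SDE $\Y_2=\bm{x}_2+\int_0^\cdot V(\Y_2(r-))\D\bm{Y}_2(r)$, take the supremum in time, apply $(a+b+c+d)^2\le 4(a^2+b^2+c^2+d^2)$, control the drift term by Cauchy--Schwarz, and bound the Brownian and compensated-Poisson integrals using BDGN~\eqref{eq:BDGN}. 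The Lipschitz bound $|V(\bm{y})|^2\le 2|V(\bm{x}_2)|^2+2K^2|\bm{y}-\bm{x}_2|^2$ then closes an inequality of the form $u(s)\le A(t)+B(t)\int_0^s u(r)\D r$, and Gronwall's lemma yields the explicit $C_2(\Y_2;t)$; the bound $\E[\|\V_2\|_{[0,t]}^2]\le C_2(\V_2;t)$ follows immediately from Lipschitz applied pathwise.

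For the main estimate~\eqref{eq:ER_T_bound_with_GB}, the key algebraic step is the telescoping identity
\[
\int_0^t\!V(\Y_1(r-))\D\bm{Y}_1(r)-\int_0^t\!V(\Y_2(r-))\D\bm{Y}_2(r)
= \int_0^t\!V(\Y_2(r-))\D(\bm{Y}_1-\bm{Y}_2)(r)+\int_0^t\![V(\Y_1(r-))-V(\Y_2(r-))]\D\bm{Y}_1(r),
\]
which, together with $\Delta\bm{x}$, expresses $\Delta\Y(t)$ as a sum of seven contributions after splitting each driver into its drift, Brownian, and compensated-jump components. The three ``$V(\Y_2)$-type'' contributions generate the seed $\kappa_2^\mft$: the drift yields $t^2|\Delta\bm{a}|^2 C_2(\V_2;t)$ via Cauchy--Schwarz, while BDGN applied to the Brownian and jump-martingale integrals produces $C_0 t|\Delta\bm{\Sigma}|^2 C_2(\V_2;t)$ and $C_0 t\,\nu(\Delta f;2)\,C_2(\V_2;t)$, respectively. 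The crucial technical step is the quadratic-variation computation for $\bm{M}_1-\bm{M}_2$: from the thinning representation~\eqref{eq:thining_coupling_defn}, the difference is an integral against the common Poisson measure $\Xi$ marked by $\vartheta$, and since $(\1_{\{\vartheta\le f_1(\bm{w})\}}-\1_{\{\vartheta\le f_2(\bm{w})\}})^2=\1_{\{\min(f_1,f_2)(\bm{w})<\vartheta\le\max(f_1,f_2)(\bm{w})\}}$, integrating $\vartheta\in[0,1]$ produces the predictable compensator $|\bm{w}|^2|f_1(\bm{w})-f_2(\bm{w})|\,\nu(\D\bm{w})\D r$, whose total mass is precisely $\nu(\Delta f;2)$.

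The three ``$V(\Y_1)-V(\Y_2)$-type'' contributions are then controlled via $|V(\Y_1(r-))-V(\Y_2(r-))|^2\le K^2|\Delta\Y(r-)|^2$; after BDGN and Cauchy--Schwarz they yield a right-hand side of the Gronwall form $\eta'(s)\int_0^s\E[\|\Delta\Y\|_{[0,r]}^2]\D r$ with $\eta'$ depending on $s|\bm{a}_1|^2$, $|\bm{\Sigma}_1|^2$, and $\nu_1(1;2)$, and Gronwall's inequality produces the exponential factor $\e^{\eta_2(t)}$ with $\eta_2$ as in~\eqref{eq:kappa_eta2}. Part~(b) follows the same template but with the finite-variation decomposition $\bm{Y}_i=\bm{b}_i\cdot+\bm{J}_i$ and arguments in $L^1$: the triangle inequality replaces BDGN, and the marking identity gives the pathwise bound $\E[\|\bm{J}_1-\bm{J}_2\|_{[0,t]}]\le t\,\nu(\Delta f;1)$, producing the analogous seed $\kappa_1^\mft$. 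The main obstacle in both parts is bookkeeping: arranging the telescoping so that only $\bm{Y}_1$'s characteristics (and not $\bm{Y}_2$'s) appear inside the Gronwall exponent, exactly as in~\eqref{eq:kappa_eta2}--\eqref{eq:kappa_eta1}.
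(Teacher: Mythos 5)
Your proposal is essentially the paper's proof: you use the same telescoping decomposition $V(\Y_1)\,\D\bm{Y}_1-V(\Y_2)\,\D\bm{Y}_2 = V(\Y_2)\,\D(\Delta\bm{Y}) + \Delta\V\,\D\bm{Y}_1$, the same split of each driver into drift, Brownian, and compensated-jump pieces, the same arrangement so that only $\bm{Y}_1$'s characteristics enter the Gr\"onwall exponent, and the same BDGN/Cauchy--Schwarz/Gr\"onwall closure; the preliminary bound $\E[\|\Y_2\|_{[0,t]}^2]\le C_2(\Y_2;t)$ is obtained exactly as you indicate, by running the main argument with $(\Y_1,\Y_2)$ replaced by $(\Y_2,\bm{x}_2)$. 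The one place you deviate is the pure-jump martingale difference: the paper writes $\bm{M}_1-\bm{M}_2=\bm{M}^+-\bm{M}^-$ as a difference of two compensated Poisson integrals with disjoint atoms (built from $\Lambda_\pm$), then applies the elementary inequality $(a+b)^2\le 2a^2+2b^2$ and BDGN to each piece separately, which is where the stated factor $2C_0$ in $\kappa_2^\mft$ originates; you instead apply BDGN directly to $\int\V_2\,\D(\bm{M}_1-\bm{M}_2)$ and compute the predictable compensator of the quadratic variation via the indicator identity $(\1_{\{\vartheta\le f_1\}}-\1_{\{\vartheta\le f_2\}})^2=\1_{\{\min(f_1,f_2)<\vartheta\le\max(f_1,f_2)\}}$, which is valid, yields the same quantity $\nu(\Delta f;2)$, and actually saves a factor of $2$. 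For part (b) you and the paper both drop BDGN in favour of the triangle inequality and the compensation formula on the finite-variation decomposition, so the arguments coincide.
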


\begin{proof}[Proof of Theorem~\ref{thm:gen_bound_thin}(a)]
We will establish~\eqref{eq:ER_T_bound_with_GB} under the assumption that $\E[\|\Y_2\|^2_{[0,t]}]\le C_2(\Y;t)$ for all $t\ge 0$ and some finite function $C_2(\Y_2;\cdot)$, which, by the Lipschitz property of $V$, then implies $\E[\|\V_2\|^2_{[0,t]}]\le 2(|V(\bm{x}_2)|^2+KC_2(\Y_2;t))\eqqcolon C_{2}(\V_2;t)$. Then, the formula~\eqref{eq:C_2(Y_2)} for $C_2(\Y_2;\cdot)$ will follow from this case, by replacing the pair of processes $(\Y_1,\Y_2)$ with $(\Y_2,\bm{x}_2)$ (the assumed functions corresponding to $C_2(\Y;\cdot)$ and $C_2(\V_2;\cdot)$ being identically equal to $|\bm{x}_2|^2$ and $|V(\bm{x}_2)|^2$, respectively). 

The proof is split into 3 steps. In Step 1, we bound $R_2(t)\coloneqq \E[\|\Delta\Y\|_{[0,t]}^2]$ in terms of suprema of integrals with respect to the drift and jump martingale and Brownian components. In Step 2, we control the expectations of those terms. In Step 3, we combine the bounds in Steps 1 and 2 to establish a self-referential. Then, Gr\"onwall's inequality gives~\eqref{eq:ER_T_bound_with_GB}. Throughout, we consider $t\le T$.

\underline{\textbf{Step 1.}} Recall the quadratic to arithmetic means (QM-AM) inequality: $|\sum_{i=1}^k x_i/k|^2 \le \sum_{i=1}^k x_i^2/k$ for any $x_1,\ldots,x_k \in \R$, which follows by convexity and Jensen's inequality. Then, using the L\'evy--It\^o decomposition $\bm{Y}_i(t)=\bm{a}_i t+\bm\Sigma_i\bm{B}(t)+\bm{M}_i(t)$, we get
\begin{gather}
R_2(t)
=\E\big[\|\Delta\Y\|_{[0,t]}^2\big]
=\E\big[\|\Y_1-\Y_2\|_{[0,t]}^2\big]
\le 6
\E\big[|\Delta\bm{x}|^2+\xi_1+\xi_2+\xi_3+\xi_4+\xi_5
\big],
\quad\text{where}\label{eq:R(T)_gen_bound}
\\ \nonumber
\xi_1 \coloneqq \bigg\|\int_0^\cdot \Delta\V(s-)\,\D \bm{M}_1(s)\bigg\|_{[0,t]}^2,
\quad\xi_2\coloneqq \bigg\|\int_0^\cdot \V_2(s-)\big(\D \bm{M}_1(s)-\D \bm{M}_2(s)\big)\bigg\|_{[0,t]}^2, 
\\ \nonumber
\xi_3 \coloneqq \bigg\|\int_0^\cdot \Delta\V(s-)\bm{a}_1\,\D s\bigg\|_{[0,t]}^2, 
\enskip
\xi_4 \coloneqq \bigg\|\int_0^\cdot \V_2(s-)\Delta\bm{a}\,\D s\bigg\|_{[0,t]}^2,
\enskip
\xi_5\coloneqq
\bigg\|\int_0^\cdot \Delta(\V(s-)\bm{\Sigma})\,\D\bm{B}(s)\bigg\|_{[0,t]}^2.
\end{gather}

\underline{\textbf{Step 2.}}
The goal of this step is to bound each $\E[\xi_j]$ for $j\in\{1,\ldots,5\}$.

{\underline{Drift terms.}} By sub-multiplicativity $|\bm{A} \bm{w}| \le |\bm{A}||\bm{w}|$, for $\bm{A} \in \R^{m \times d}$ and $\bm{w} \in \R^d$, where $|\bm{A}|$ is the Frobenius norm of $\bm{A}$, we obtain
\[
\E[\xi_{3}+\xi_{4}]
\le \E\bigg[ 
\bigg(|\bm{a}_1|\int_0^t |\Delta\V(s-)|\,\D s\bigg)^2
+\bigg(|\Delta\bm{a}|\int_0^t|\V_2(s-)|\,\D s\bigg)^2\bigg].
\]
Next, H\"older's inequality, Tonelli's theorem and the Lipschitz property of $V$ imply that
\begin{align}
\E[\xi_{3}+\xi_{4}]
&\le T|\bm{a}_1|^2\int_0^t \E\big[|\Delta\V(s-)|^2\big]\,\D s
+T|\Delta\bm{a}|^2\int_0^t\E\big[|\V_2(t-)|^2\big]\,\D s
\nonumber\\
\label{eq:drift_bound_thinning_p<2}
&\le 
T|\bm{a}_1|^2K^2 \int_0^t R_2(s)\,\D s
+ T^2C_2(\V_2;T)|\Delta\bm{a}|^2.
\end{align}

{\underline{Pure-jump martingale terms.}} 
Using~\eqref{eq:thinning_coupling_FM}, we see that
\begin{equation*}
\xi_{1}+\xi_{2}
= 
\bigg\|\int_{[0,\cdot]\times \R^d_{\bm0}}\!\!\Delta\V(s-)\bm{w}\,\wt{\Xi}_1(\D s, \D \bm{w}) \bigg\|_{[0,t]}^2
+
\bigg\|\int_{[0,\cdot]\times \R^d_{\bm0}}\!\!\V_2(s-)\bm{w}\, \big(\wt{\Xi}_1(\D s, \D \bm{w})-\wt{\Xi}_2(\D s, \D \bm{w}) \big)\bigg\|_{[0,t]}^2 .
\end{equation*} 
For any real-valued function $f$, denote $f^+:=\max\{0,f\}$. With $\Xi_1$ and $\Xi_2$ as in~\eqref{eq:thining_coupling_defn}, define the Poisson random measures 
\begin{equation*}
\Lambda_+\coloneqq\sum_{n\in\N} \1_{\{f_{2}(\bm{\upsilon}_n)<\vartheta_n\le f_{1}(\bm{\upsilon}_n)\}}\delta_{(U_n,\bm{\upsilon}_n)}
\qquad\text{and}\qquad
\Lambda_-\coloneqq\sum_{n\in\N} \1_{\{f_{1}(\bm{\upsilon}_n)<\vartheta_n\le f_{2}(\bm{\upsilon}_n)\}}\delta_{(U_n,\bm{\upsilon}_n)},
\end{equation*}
with mean measures $\Leb\otimes(f_{1}-f_{2})^+\nu$ and $\Leb\otimes(f_{2}-f_{1})^+\nu$, respectively. Hence, $\Xi_{1}-\Xi_{2}=\Lambda_+-\Lambda_-$. The processes $\Lambda_+$ and  $\Lambda_-$ are independent since they are thinnings of the same Poisson random measure with disjoint supports. Let $\wt\Lambda_+$ and $\wt\Lambda_-$ denote their respective compensated measures and define the L\'evy processes $\bm{M}^\pm=(\bm{M}^\pm(t))_{t\ge0}$ via $\bm{M}^\pm(t):=\int_{(0,t]\times D}\bm{w}\,\wt\Lambda_\pm(\D s,\D\bm{w})$, where $\pm\in\{+,-\}$. The independent square-integrable martingales $\bm{M}^+$ and $\bm{M}^-$ satisfy $\bm{M}_1-\bm{M}_2=\bm{M}^+-\bm{M}^-$ pathwise, so the BDGN inequality~\cite[Thm~1.1]{MR3463679} (see also~\cite[Thm~1(b)]{MR394861}) gives
\begin{align}
\E[\xi_{2}]
& =
\E \Bigg[\bigg\|\int_0^\cdot \V_2(s-) \,\D(\bm{M}^+(s)-\bm{M}^-(s))\bigg\|_{[0,t]}^2\Bigg]\nonumber\\
&\le
2\E \Bigg[\bigg\|\int_0^\cdot \V_2(s-) \,\D\bm{M}^+(s)\bigg\|_{[0,T]}^2
+\bigg\|\int_0^\cdot \V_2(s-) \,\D\bm{M}^-(s)\bigg\|_{[0,T]}^2\Bigg]\nonumber\\
& \le 2C_0\int_{[0,T]\times \R^d_{\bm0}} \E\big[|\V_2(s-)|^2\big] |\bm{w}|^2|\Delta f(\bm{w})| \,\D s \otimes \nu(\D\bm{w}) 
\le 2C_0 T C_2(\V_2;T) \nu(\Delta f;2).
\label{eq:bound_small_jumps_thinning_1_p<2}
\end{align} 

Finally, by the Lipschitz property of $V$, the BDGN inequality and Tonelli's theorem, we see that
\begin{align}
\E[\xi_{1}]
&= 
\E\Bigg[\bigg\|\int_{[0,\cdot]\times \R^d_{\bm0}}\!\!\Delta\V(s-)\bm{w}\,\wt{\Xi}_1(\D s, \D \bm{w}) \bigg\|_{[0,t]}^2 \Bigg]
\nonumber\\
&\le 
C_0 \int_{[0,t]\times \R^d_{\bm0}} \E\big[|\Delta\V(s-)|^2\big] |\bm{w}|^2f_1(\bm{w})\,\D s \otimes \nu(\D\bm{w})
\le C_0K^2 \nu_1(1;2)\int_0^t R_2(s)\,\D s.
\label{eq:bound_small_jumps_thinning_2_p<2}
\end{align}

{\underline{Brownian term.}} 
The Lipschitz property of $V$, Jensen's and the BDGN~\cite[Thm~1.1]{MR3463679} inequalities, Tonelli's theorem, and the fact that the Frobenius norm dominates the operator norm imply that
\begin{align}
\E[\xi_5]
&\le C_0\E\bigg[\int_0^t|\Delta(\V(s-)\bm{\Sigma})|^2\,\D s \bigg]
\le 2C_0\E\bigg[\int_0^t\big(|\V_2(t-)\Delta\bm{\Sigma}|^2
    +|\Delta\V(s-)\bm{\Sigma}_1|^2\big)\,\D s\bigg]\nonumber\\
&\le 2C_0\bigg(TC_2(\V_2;T)|\Delta\bm\Sigma|^2 
+ K^2|\bm\Sigma_1|^2 \int_0^t R_2(s)
\,\D s\bigg).\label{eq:xi_5}
\end{align}

\underline{\textbf{Step 3.}} Assembling the bounds in~\eqref{eq:R(T)_gen_bound}--\eqref{eq:xi_5} leads to
\begin{equation*}
R_2(t)
\le \kappa_2^\mft(T) + \frac{\eta_2(T)}{T}\int_0^t R_2(s)\,\D s,
\quad t\le T.
\end{equation*}
Gr\"onwall's inequality gives $R_2(t) \le \kappa_2^\mft(T)\e^{\eta_2(T)t/T}$ for $t\in[0,T]$, so taking $t=T$ yields~\eqref{eq:ER_T_bound_with_GB}.
\end{proof}

\begin{proof}[Proof of Theorem~\ref{thm:gen_bound_thin}(b)]
In spirit, we follow the same steps of the proof of Part (a), but without applying the QM--AM or maximal inequalities. Again, the formula for $C_1(\Y_2;\cdot)$ will follow from the general case. Define $R_1(t)\coloneqq \E[\|\Delta\Y\|_{[0,t]}]$ for $t\ge 0$ and consider throughout $t\le T$. 

\underline{\textbf{Step 1.}} 
Let $\bm{J}_i$ and $\bm{b}_i$ be as in~\eqref{eq:thinning_coupling_FV}. Thus, using the triangle inequality, we obtain
\begin{equation}
R_1(t)=\E\|\Delta\Y\|_{[0,t]}=\E\|\Y_1-\Y_2\|_{[0,t]}
\le |\Delta\bm{x}|+\E[\xi_1+\xi_2+\xi_3],
\quad\text{where}\label{eq:R(T)_gen_bound-FV}
\end{equation}
\vspace{-6mm}
\begin{align*}
&\xi_1\coloneqq
\bigg\|\int_0^\cdot \Delta\V(t-)\bm{b}_1\,\D t\bigg\|_{[0,t]},  
&&\xi_2\coloneqq
\bigg\|\int_0^\cdot \V_2(t-)\Delta\bm{b}\,\D t\bigg\|_{[0,t]}\\
&\xi_3 \coloneqq \bigg\|\int_0^\cdot \Delta\V(s-)\,\D\bm{J}_1(s)\bigg\|_{[0,t]},
&&\xi_4\coloneqq \bigg\|\int_0^\cdot \V_2(s-)(\D\bm{J}_1(s)-\D\bm{J}_2(s))\bigg\|_{[0,t]}.
\end{align*} 

\underline{\textbf{Step 2.}}
The goal of this step is to bound each $\E[\xi_j]$ for $j\in\{1,\ldots,4\}$. 

{\underline{Drift terms.}} Using that $|\bm{A} \bm{w}| \le |\bm{A}||\bm{w}|$, where $\bm{A} \in \R^{d \times d}$, $\bm{w} \in \R^d$ and $|\bm{A}|$ is the Frobenius norm, Tonelli's theorem and the Lipschitz property of $V$, we obtain
\begin{align}
\label{eq:xi1-FV}
\E[\xi_{1}]
&\le \E\bigg[|\bm{b}_1|\int_0^t|\Delta\V(s-)|\,\D s \bigg] 
\le K|\bm{b}_1|\int_0^t R_1(s)\,\D s, \quad \text{ and }\\
\label{eq:xi2-FV}
\E[\xi_{2}]
&\le \E\bigg[|\Delta\bm{b}|\int_0^t|\V_2(s-)|\,\D s \bigg] 
\le T C_1(\V_2;T)|\Delta\bm{b}|.
\end{align} 

{\underline{Bounding $\xi_3$.}} 
The triangle inequality, the Lipschitz property of $V$, Tonelli's theorem and the compensation formula~\cite[Thm~10.21]{MR4226142} give
\begin{equation}
\label{eq:xi_3-middle-step-FV}
\E[\xi_3] 
\le\E\bigg[\int_{[0,t]\times\R^d_{\bm0}}|\Delta\V(s-)| \!\cdot\! |\bm{w}|\,\Xi_1(\D s,\D\bm{w})\bigg]
\le K\nu_1(1;1)\int_0^t R_1(s) \,\D s.
\end{equation} 

{\underline{Bounding $\xi_4$.}} 
Let $\Lambda_+$ and $\Lambda_-$ be as in the proof of Part~(a). Using~\eqref{eq:thinning_coupling_S_j_L_j}, the triangle inequality, the fact that Poisson integrals are infinite sums, the compensation formula~\cite[Thm~10.21]{MR4226142}, and Tonelli's theorem, we see that
\begin{equation}
\label{eq:xi4-FV}
\E[\xi_{4}]
\le\E\bigg[\int_{[0,t]\times\R^d_{\bm0}}|\V_2(s-)| |\bm{w}|(\Lambda_+(\D s,\D\bm{w})+\Lambda_-(\D t,\D\bm{w}))\bigg]
\le TC_1(\V_2;T) \nu(\Delta f;1).
\end{equation} 

\underline{\textbf{Step 3.}} Assembling the bounds in~\eqref{eq:R(T)_gen_bound-FV}--\eqref{eq:xi4-FV} yields
\begin{equation*}
R_1(t)
\le \kappa_1^\mft(T) + \frac{\eta_1}{T} \int_0^t R_1(s)\,\D s,
\quad t\le T.
\end{equation*}
Again, an application of Gr\"onwall's inequality for fixed $T\ge 0$ and $t\in[0,T]$ yields~\eqref{eq:ER_T_bound_with_GB-FV}.
\end{proof}

\subsection{Comonotonic Coupling}\label{sec:comonot_coup}
Recall from~\cite[Sec.~3.2]{WassersteinPaper}, the definition of the comonotonic coupling, which decomposes the jumps of a L\'evy process into its magnitude (i.e. norm) and angular component. The main idea behind the coupling of the L\'evy processes $\bm{Y}_1$ and $\bm{Y}_2$ is to couple their respective Poisson random measures of jumps via a comonotonic coupling of the magnitudes of jumps, while simultaneously aligning their angular components. 

We now describe this construction. Recall that the L\'evy processes $\bm{Y}_1$ and $\bm{Y}_2$ in $\R^d$ have characteristic triplets $(\bm{\gamma}_{i},\bm{\Sigma}_{i}\bm{\Sigma}_{i}^\tra,\nu_{i})$ for $i=1,2$. For $i=1,2$, suppose the L\'evy measure $\nu_{i}$ of $\bm{Y}_i$ admits a radial decomposition (see~\cite[p.~282]{MR647969}): there exists a probability measure $\sigma_{i}$ on the unit sphere $\Sp^{d-1}$ (with convention $\Sp^{0}\coloneqq\{-1,1\}$) such that:
\[
\nu_{i}(B)= \int_{\Sp^{d-1}}\int_0^\infty \1_{B}(x\bm{v})\rho_{i}^0(\D x,\bm{v})\sigma_{i}(\D \bm{v}), \quad \text{ for any } B\in \mathcal{B}(\R^d \setminus \{\bm{0}\}),
\] where $\{\rho^0_{i}(\cdot,\bm{v})\}_{\bm{v}\in \Sp^{d-1}}$ is a measurable family of L\'evy measures on $(0,\infty)$. Define the probability measure $\sigma \coloneqq (\sigma_{1}+\sigma_{2})/2$ on $\Sp^{d-1}$ and the Radon-Nikodym derivatives $f^\sigma_{i}(\bm{v})\coloneqq \sigma_{i}(\D\bm{v})/\sigma(\D\bm{v})\le 2$ for $\bm{v} \in \Sp^{d-1}$ and $i=1,2$. Consider the following radial decompositions of $\nu_{i}$: 
\begin{equation}\label{eq:radial_decomp}
\nu_{i}(B)= \int_{\Sp^{d-1}}\int_0^\infty \1_{B}(x\bm{v})\rho_{i}(\D x,\bm{v})\sigma(\D \bm{v}), \quad \text{ for }B\in \mathcal{B}(\R^d \setminus \{\bm{0}\}), 
\end{equation} where $\rho_{i}(\cdot,\bm{v})\coloneqq f^\sigma_{i}(\bm{v})\rho_{i}^0(\cdot,\bm{v})$ for $\bm{v} \in \Sp^{d-1}$ and $i=1,2$. The advantage of the decomposition in~\eqref{eq:radial_decomp}, compared to the one in the display above, is that the angular components of jumps are sampled from the same measure $\sigma$ on $\Sp^{d-1}$, making it possible to couple the jumps of $\bm{Y}_1$ and $\bm{Y}_2$ by only coupling their magnitudes and matching their angles. 

For every $\bm{v}\in \Sp^{d-1}$ and $i=1,2$, let $u\mapsto\rho_{i}^{\la}(u,\bm{v})$ be the generalised right-continuous inverse of $x\mapsto\rho_{i}([x,\infty),\bm{v})$. %Let $(\bm{V}_n)_{n \in \N}$ be a sequence of iid random vectors on $\Sp^{d-1}$ with common distribution $\sigma$ and 
Let $\Xi$ be a Poisson random measure on $\R_+\times\R_+\times \Sp^{d-1}$ with measure $\Leb\otimes\Leb\otimes\sigma$ and compensated measure $\wt\Xi$, given by:
\begin{equation}\label{eq:marked_PPP_cmnotonic}
    \Xi\coloneqq \sum_{n \in \N}\delta_{(U_n,\Gamma_n,\bm{V}_n)}, \qquad \wt\Xi(\D t,\D x,\D \bm{v})=\Xi(\D t, \D x, \D \bm{v})-\D t \otimes \D x \otimes \sigma(\D \bm{v}).
\end{equation} 

Next, we note that (by~\cite[Prop.~3.3]{WassersteinPaper})%for any $\ve\in(0,\infty)$ (and even $\ve=\infty$ when $\bm{Y}_1$ and $\bm{Y}_2$ both have jumps of finite variation)
, the small-jump components of $\bm{Y}_1$ and $\bm{Y}_2$ take the form
\begin{equation}\label{eq:comono_coupling_1}    \wh{\bm{S}}_i(t)\!\coloneqq\!\!\int_{[0,t]\times [1,\infty)\times \Sp^{d-1}}
\!\!\bm{v}\rho_{i}^{\la}(x,\bm{v})\wt \Xi(\D t, \D x, \D \bm{v}), \quad \text{ for }i \in \{1,2\} \text{ and all }t\ge 0. %\enskip \bm{M^Y}(t)\!\coloneqq\!\!\int_{[0,t]\times [\ve,\infty)\times \Sp^{d-1}}\!\!\bm{v}\rho_{\bm{Y}}^{\la}(x,\bm{v})\wt \Xi(\D s, \D x, \D \bm{v}).
\end{equation}
The big-jump components of $\bm{Y}_1$ and $\bm{Y}_2$ can similarly be expressed as
\begin{equation}\label{eq:comono_coupling_2}
\wh{\bm{L}}_i(t)\coloneqq \int_{[0,t]\times (0,1)\times \Sp^{d-1}}\bm{v}\rho_{i}^{\la}(x,\bm{v}) \Xi(\D t, \D x, \D \bm{v}), \quad \text{ for }i \in \{1,2\} \text{ and all }t\ge 0. %\enskip \bm{L^Y}(t)\coloneqq \int_{[0,t]\times (0,\ve)\times \Sp^{d-1}}\bm{v}\rho_{\bm{Y}}^{\la}(x,\bm{v}) \Xi(\D s, \D x, \D \bm{v}).
\end{equation} 
By~\cite[Prop.~3.3]{WassersteinPaper}, there exist constants $\bm{\varpi}_{i}\in\R^d$ and independent of standard Brownian motions $\bm{B}_i$ on $\R^d$, such that $\bm{Y}_i(t)= \bm{\varpi}_{i}t+\bm{\Sigma}_{i}\bm{B}_i(t)+\wh{\bm{S}}_{i}(t)+\wh{\bm{L}}_{i}(t)$, for all $t\ge 0$ and $i=1,2$, where
\begin{equation*}
    \bm{\varpi}_i \coloneqq \bm{\gamma}_i - \int_{\Sp^{d-1}} \bm{v} \int_0^\infty \big(x\1_{(0,1)}(x)-\rho_i^{\la}(\rho_i([x,\infty),\bm{v}),\bm{v})\1_{\{\rho_i([x,\infty),\bm{v})\ge 1\}}(x)\big) \rho_i(\D x,\bm{v})  \sigma(\D \bm{v}).
\end{equation*} 
 
If $\bm{Y}_1$ and $\bm{Y}_2$ both have either jumps of finite variation or a finite first moment, they admit alternative decompositions (which agree with the ones introduced above). We now derive them in this context while preserving the notation used therein. If $\bm{Y}_1$ and $\bm{Y}_2$ both have jumps of finite variation, then they both admit the alternative L\'evy--It\^o decomposition $\bm{Y}_i(t)=\bm{b}_{i} t+\bm{\Sigma}_{i}\bm{B}_{i}(t)+\bm{J}_{i}(t)$, $t\ge 0$, where
\begin{equation}
\label{eq:thinning_coupling_FV_CM}
\bm{b}_i
\coloneqq\bm{\varpi}_i-\int_{[1,\infty)\times \Sp^{d-1}}
\!\!\bm{v}\rho_{i}^{\la}(x,\bm{v})\,\D x\, \sigma(\D \bm{v}),
\quad
\bm{J}_i(t)
\coloneqq \int_{[0,t]\times (0,\infty)\times \Sp^{d-1}}\bm{v}\rho_{i}^{\la}(x,\bm{v}) \Xi(\D s, \D x, \D \bm{v}).
\end{equation}
If $\bm{Y}_1$ and $\bm{Y}_2$ both have a finite first moment (and possibly infinite variation), the processes also admit the L\'evy--It\^o decomposition $\bm{Y}_i(t)=\bm{a}_i t + \bm\Sigma_i\bm{B}_i(t) + \bm{M}_i(t)$, $t \ge 0$, where
\begin{equation}
\label{eq:thinning_coupling_FM_CM}
\bm{a}_i
\coloneqq\bm{\varpi}_i+\int_{(0,1)\times \Sp^{d-1}}
\bm{v}\rho_{i}^{\la}(x,\bm{v})\,\D x \,\sigma(\D \bm{v}),
\quad
\bm{M}_i(t)
\coloneqq \int_{[0,t]\times (0,\infty)\times \Sp^{d-1}}
\!\!\bm{v}\rho_{i}^{\la}(x,\bm{v})\wt \Xi(\D s, \D x, \D \bm{v}).
\end{equation}

Throughout, we will choose $\bm{B}\coloneqq \bm{B}^{\bm{X}}=\bm{B}^{\bm{Y}}$, for a standard Brownian motion $\bm{B}$. Let $C_2(\Y_2;\cdot)$ and $C_1(\Y_2;\cdot)$ (as well as $C_2(\V_2;\cdot)$ and $C_1(\V_2;\cdot)$) be as in~\eqref{eq:C_2(Y_2)} and~\eqref{eq:C_1(Y_2)} and $\eta_2$ and $\eta_1$ be as in~\eqref{eq:kappa_eta2} and~\eqref{eq:kappa_eta1}. Further let 
\[
\mu(A)
\coloneqq\int_{\Sp^{d-1}}\int_0^\infty \1_A(x\bm{v})\,\D x\,\sigma(\D\bm{v})
\quad\text{and}\quad
\mu(\varphi)
\coloneqq\int_{\Sp^{d-1}}\int_0^\infty \varphi(x,\bm{v})\,\D x\,\sigma(\D\bm{v}),
\]
for any $A\in\Borel(\R^d_{\bm 0})$, measurable $\varphi$ and $p\ge 0$. Moreover, define
\begin{align}\label{eq:defn_kappa_2_cm}
\kappa^\mfc_2(t)
&\coloneqq 6|\Delta\bm{x}|^2
+ 6 t C_2(\V_2;t) \big[t|\Delta\bm{a}|^2 
+ 2 C_0 |\Delta\bm\Sigma|^2
+ C_0 \mu\big(|\Delta\rho^\la|^2\big)\big],\\
\label{eq:defn_kappa_1_cm}
\kappa^\mfc_1(t)
&\coloneqq |\Delta\bm{x}|
+ t C_1(\V_2;t) \big(|\Delta\bm{b}| 
+ C_0\mu\big(|\Delta\rho^{\la}|\big)\big).
\end{align}

\begin{theorem}\label{thm:gen_bound_como}
Suppose the L\'evy measures of $\bm{Y}_1$ and $\bm{Y}_2$ admit a radial decomposition and let the processes follow the comonotonic coupling. Let $\Y_1$ and $\Y_2$ be the solutions to the SDEs in~\eqref{eq:SDE_setting} and recall $\eta_2,\eta_1,\kappa_2^\mfc$ \& $\kappa_1^\mfc$ from~\eqref{eq:kappa_eta2},~\eqref{eq:kappa_eta1},~\eqref{eq:defn_kappa_2_cm} \&~\eqref{eq:defn_kappa_1_cm}.\\
{\nf{(a)}} Suppose $\mu(|\rho^\la_1|^2+|\rho^\la_2|^2) = \int_{\R^d_{\bm 0}}|\bm{w}|^2(\nu_1+\nu_2)(\D\bm{w})<\infty$, then
\begin{equation*}
\E\big[\|\Delta\Y\|_{[0,t]}^2\big] 
\le \kappa_2^\mfc(t)\e^{\eta_2(t)},
\quad \text{for all }t\ge 0.
\end{equation*}
{\nf{(b)}} Suppose $\bm{\Sigma}_1=\bm{\Sigma}_2=\bm{0}$, $\mu(\rho^\la_1+\rho^\la_2) = \int_{\R^d_{\bm 0}}|\bm{w}|(\nu_1+\nu_2)(\D\bm{w})<\infty$, then
\begin{equation*}
 \E\big[\|\Delta\Y\|_{[0,t]}\big] 
 \le \kappa_1^\mfc(t)\e^{\eta_1(t)},
 \quad \text{for all }t\ge 0.
\end{equation*}
\end{theorem}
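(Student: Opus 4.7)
The plan is to mirror the three-step argument used for Theorem~\ref{thm:gen_bound_thin}, exploiting the fact that under the comonotonic coupling, the difference of the jump components of $\bm{Y}_1$ and $\bm{Y}_2$ is a single Poisson integral (over the shared PRM $\Xi$ from~\eqref{eq:marked_PPP_cmnotonic}) rather than a difference of two thinnings. This simplification is the main payoff and should yield the cleaner term $\mu(|\Delta\rho^\la|^q)$ in place of $\nu(\Delta f;q)$, without the extra factor of $2$ needed for the split $\bm{M}^+-\bm{M}^-$ in the thinning proof.

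For Part (a), set $R_2(t)\coloneqq \E[\|\Delta\Y\|_{[0,t]}^2]$ and, using the common Brownian motion $\bm{B}$ and the L\'evy--It\^o decomposition $\bm{Y}_i(t)=\bm{a}_i t+\bm\Sigma_i\bm{B}(t)+\bm{M}_i(t)$ from~\eqref{eq:thinning_coupling_FM_CM}, split
\[
R_2(t)\le 6\E\bigl[|\Delta\bm{x}|^2+\xi_1+\xi_2+\xi_3+\xi_4+\xi_5\bigr]
\]
exactly as in~\eqref{eq:R(T)_gen_bound} via the QM--AM inequality. The drift terms $\E[\xi_3+\xi_4]$ and the Brownian term $\E[\xi_5]$ are bounded identically to the thinning proof, yielding respective contributions $T|\bm{a}_1|^2K^2\int_0^t R_2$, $T^2C_2(\V_2;T)|\Delta\bm{a}|^2$, $2C_0TC_2(\V_2;T)|\Delta\bm\Sigma|^2$ and $2C_0K^2|\bm\Sigma_1|^2\int_0^t R_2$. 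The term $\E[\xi_1]$, corresponding to the martingale integral against $\bm{M}_1$, is handled by the BDGN inequality and Tonelli, giving $C_0K^2\mu(|\rho^\la_1|^2)\int_0^t R_2(s)\,\D s$ (observing $\mu(|\rho^\la_1|^2)=\nu_1(1;2)$ by the radial decomposition~\eqref{eq:radial_decomp} and Fubini). The key step is controlling $\E[\xi_2]$: writing
\[
\bm{M}_1(t)-\bm{M}_2(t)
=\int_{[0,t]\times(0,\infty)\times\Sp^{d-1}}\bm{v}\bigl(\rho_1^\la(x,\bm{v})-\rho_2^\la(x,\bm{v})\bigr)\wt\Xi(\D s,\D x,\D\bm{v}),
\]
which is a square-integrable martingale with explicit quadratic variation, so the BDGN inequality yields
\[
\E[\xi_2]
\le C_0\int_{[0,t]\times(0,\infty)\times\Sp^{d-1}}\E\bigl[|\V_2(s-)|^2\bigr]\,|\Delta\rho^\la(x,\bm{v})|^2\,\D s\,\D x\,\sigma(\D\bm{v})
\le C_0TC_2(\V_2;T)\mu(|\Delta\rho^\la|^2).
\]
Assembling these bounds gives $R_2(t)\le \kappa_2^\mfc(T)+(\eta_2(T)/T)\int_0^t R_2(s)\,\D s$ for $t\le T$, and Gr\"onwall's inequality at $t=T$ yields the stated bound.

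For Part (b), set $R_1(t)\coloneqq \E[\|\Delta\Y\|_{[0,t]}]$ and use the finite-variation decomposition $\bm{Y}_i(t)=\bm{b}_i t+\bm{J}_i(t)$ from~\eqref{eq:thinning_coupling_FV_CM}. The triangle inequality gives $R_1(t)\le |\Delta\bm{x}|+\E[\xi_1+\xi_2+\xi_3+\xi_4]$ as in~\eqref{eq:R(T)_gen_bound-FV}. The drift terms and $\xi_3$ are handled exactly as in the thinning proof (using the Lipschitz property, Tonelli, and the compensation formula). For $\xi_4$, the comonotonic structure gives
\[
\bm{J}_1(t)-\bm{J}_2(t)
=\int_{[0,t]\times(0,\infty)\times\Sp^{d-1}}\bm{v}\bigl(\rho_1^\la(x,\bm{v})-\rho_2^\la(x,\bm{v})\bigr)\Xi(\D s,\D x,\D\bm{v}),
\]
so triangle inequality inside the Poisson integral, Tonelli, and the compensation formula yield $\E[\xi_4]\le TC_1(\V_2;T)\mu(|\Delta\rho^\la|)$ (a constant $C_0$ from the proposal absorbs into the bound when applying BDGN-type estimates in the display for $\kappa_1^\mfc$, though in the pure finite-variation case one could take $C_0=1$). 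Assembling terms leads to $R_1(t)\le \kappa_1^\mfc(T)+(\eta_1(T)/T)\int_0^t R_1(s)\,\D s$, and Gr\"onwall's inequality gives the result.

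The main obstacle is verifying that the difference of jump integrals under the comonotonic coupling can indeed be written as a single stochastic integral against $\wt\Xi$ (resp.\ $\Xi$) with integrand $\bm{v}\,\Delta\rho^\la(x,\bm{v})$, so that BDGN (resp.\ compensation) applies cleanly. This relies crucially on the shared PRM~\eqref{eq:marked_PPP_cmnotonic} of the comonotonic coupling and the representations~\eqref{eq:comono_coupling_1}--\eqref{eq:comono_coupling_2}; once this is in place, the remaining steps are routine applications of Lipschitz continuity, BDGN, Tonelli and Gr\"onwall, in strict parallel with Theorem~\ref{thm:gen_bound_thin}.
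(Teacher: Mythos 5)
Your argument reproduces the paper's proof step for step: the same L\'evy--It\^o decomposition from~\eqref{eq:thinning_coupling_FM_CM}--\eqref{eq:thinning_coupling_FV_CM}, the same splitting into $\xi_1,\dots,\xi_5$ (resp. $\xi_1,\dots,\xi_4$), the same BDGN/Tonelli/compensation bounds, and Gr\"onwall at the end; the central observation you flag at the close---that under the comonotonic coupling $\bm{M}_1-\bm{M}_2$ (resp. $\bm{J}_1-\bm{J}_2$) is a single stochastic integral against the shared PRM with integrand $\bm{v}\,\Delta\rho^\la$---is exactly what the paper uses in~\eqref{eq_Xi_2_cm} and~\eqref{eq_Xi_2_cm_2}. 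The only (harmless) deviation is in $\xi_4$ of part~(b), where you bound it without the $C_0$ factor via the compensation formula directly; since $C_0\ge 1$ this is consistent with, and slightly sharper than, the $\kappa_1^\mfc$ in~\eqref{eq:defn_kappa_1_cm}.
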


\begin{proof}[Proof of Theorem~\ref{thm:gen_bound_como}(a)]
The proof follows the same structure as that of Theorem~\ref{thm:gen_bound_thin}(a). Let $R_2(t)\coloneqq\E[\|\Delta\Y\|^2_{[0,t]}]$ and consider throughout $t\le T$.

\underline{\textbf{Step 1.}} Recall the QM-AM inequality: $|\sum_{i=1}^k x_i/k|^2 \le \sum_{i=1}^k x_i^2/k$ for any $x_1,\ldots,x_k \in \R$. Then, since $\int_{D^\co}|\bm{w}|\nu_i(\D\bm{w})<\infty$, the L\'evy--It\^o decomposition $\bm{Y}_i(t)=\bm{a}_i t+\bm\Sigma_i\bm{B}(t)+\bm{M}_i(t)$ in~\eqref{eq:thinning_coupling_FM_CM} gives
\begin{gather}
R_2(t)=
\E[\|\Y_1-\Y_2\|_{[0,t]}^2]
\le 
6(|\Delta\bm{x}|^2+\E[\xi_1+\xi_2+\xi_3+\xi_4+\xi_5]),
\quad\text{where}\label{eq:R(T)_gen_bound_CM}
\\ \nonumber
\xi_1 \coloneqq \bigg\|\int_0^\cdot \Delta\V(s-)\,\D \bm{M}_1(s)\bigg\|_{[0,t]}^2,
\quad
\xi_2\coloneqq \bigg\|\int_0^\cdot \V_2(s-)\big(\D \bm{M}_1(s)-\D \bm{M}_2(s)\big)\bigg\|_{[0,t]}^2, 
\\ \nonumber
\xi_3 \coloneqq \bigg\|\int_0^\cdot \Delta\V(s-)\bm{a}_1\,\D s\bigg\|_{[0,t]}^2, 
\enskip
\xi_4 \coloneqq \bigg\|\int_0^\cdot \V_2(s-)\Delta\bm{a}\,\D s\bigg\|_{[0,t]}^2,
\enskip
\xi_5\coloneqq
\bigg\|\int_0^\cdot \Delta\big(\V(s-)\bm{\Sigma}\big)\,\D \bm{B}(s)\bigg\|_{[0,t]}^2.
\end{gather}

\underline{\textbf{Step 2.}}
The goal of this step is to bound each $\E[\xi_j]$ for $j\in\{1,\ldots,5\}$.

{\underline{Drift \& Brownian terms.}} The bounds on the drift and Brownian terms follow exactly as in the proof of Theorem~\ref{thm:gen_bound_thin}(a). Indeed, as in~\eqref{eq:drift_bound_thinning_p<2} and~\eqref{eq:xi_5}, we have 
\begin{equation}
\label{eq:drift_bound_thinning_p<2_CM}
\begin{split}
\E[\xi_{3}+\xi_{4}] 
&\le 
T|\bm{a}_1|^2K^2 \int_0^t R_2(s)\,\D s
+ T^2 C_2(\V_2;T) |\Delta\bm{a}|^2, 
\quad \text{ and} \\
\E[\xi_5] 
&\le 2C_0 \bigg(TC_2(\V_2;T)|\Delta\bm\Sigma|^2 
+ K^2|\bm\Sigma_1|^2 \int_0^t R_2(s)\,\D s\bigg).
\end{split}
\end{equation}

{\underline{Pure-jump martingale terms.}} 
Using~\eqref{eq:thinning_coupling_FM_CM}, we see that
\begin{align*}
\xi_{1}+\xi_{2}
&= \bigg\|\int_{[0,\cdot]\times (0,\infty)\times \Sp^{d-1}} \Delta\V(s-)\bm{v}\rho_{1}^{\la}(x,\bm{v}) \,\wt \Xi(\D s, \D x, \D \bm{v}) \bigg\|_{[0,t]}^2\\
&\quad +
\bigg\|\int_{[0,\cdot]\times (0,\infty)\times \Sp^{d-1}} \V_2(s-)\bm{v} \,\Delta\rho^{\la}(x,\bm{v})\,\wt\Xi(\D s, \D x, \D \bm{v})\bigg\|_{[0,t]}^2 .
\end{align*} 
The BGDN inequality~\cite[Thm~1(b)]{MR394861} then gives
\begin{equation}\label{eq_Xi_2_cm}
\E[\xi_2]
\le C_0 TC_2(\V_2;T)\mu\big((\Delta\rho^{\la})^2\big).
\end{equation}
Similarly, by the Lipschitz property of $V$, the BDGN inequality~\cite[Thm~1(b)]{MR394861} and Tonelli's theorem, 
\begin{equation}
\label{eq:bound_small_jumps_thinning_2_p<2_cm}
\E[\xi_{1}]
=\E\Bigg[\bigg\|\int_{[0,\cdot]\times (0,\infty)\times \Sp^{d-1}} \!\!\!\!\!\!\Delta\V(s-)\bm{v}\rho_{1}^{\la}(x,\bm{v})\,\wt \Xi(\D s, \D x, \D \bm{v}) \bigg\|_{[0,t]}^2 \Bigg]
\le C_0 K^2 \mu\big((\rho_1^\la)^2\big)\int_0^t \! R_2(s)\,\D s.
\end{equation}

\underline{\textbf{Step 3.}} Assembling the bounds in~\eqref{eq:R(T)_gen_bound_CM}--\eqref{eq:bound_small_jumps_thinning_2_p<2_cm} yields
\[
R_2(t)
\le \kappa^\mfc_2(T) + \frac{\eta_2(T)}{T}\int_0^t R_2(s)\,\D s,
\quad t\le T.
\]
Applying Gr\"onwall's inequality, the claim follows.
\end{proof}

\begin{proof}[Proof of Theorem~\ref{thm:gen_bound_como}(b)]
Again, we follow the steps of the proof of Theorem~\ref{thm:gen_bound_thin}(b). Define $R_1(t)\coloneqq \E\|\Delta\Y\|_{[0,t]}$ for $t\ge 0$ and consider throughout $t\le T$. 

\underline{\textbf{Step 1.}} 
Let $\bm{J}_i$ and $\bm{b}_i$ be as in~\eqref{eq:thinning_coupling_FV_CM} and denote $\V_i(t)=V(\Y_i(t))$ for $t\ge 0$ and $i=1,2$. Thus, using the triangle inequality, we obtain
\begin{equation}
R_1(t)=\E\|\Delta \Y\|_{[0,t]}
\le |\Delta\bm{x}|+\E[\xi_1+\xi_2+\xi_3],
\quad\text{where}\label{eq:R(T)_gen_bound-FV_cm}
\end{equation}
\vspace{-7mm}
\begin{align*}
&\xi_1\coloneqq
\bigg\|\int_0^\cdot \Delta\V(s-)\bm{b}_1\,\D s\bigg\|_{[0,t]},  
&&\xi_2\coloneqq
\bigg\|\int_0^\cdot \V_2(s-)\Delta\bm{b}\,\D t\bigg\|_{[0,t]}\\
&\xi_3 \coloneqq \bigg\|\int_0^\cdot \Delta\V(s-)\D\bm{J}_1(s)\bigg\|_{[0,t]},
&&\xi_4\coloneqq \bigg\|\int_0^\cdot \V_2(s-)(\D\bm{J}_1(s)-\D\bm{J}_2(s))\bigg\|_{[0,t]}.
\end{align*} 

\underline{\textbf{Step 2.}}
The goal of this step is to bound each $\E[\xi_j]$ for $j\in\{1,\ldots,4\}$. 

{\underline{Drift terms.}} As before, we obtain
\begin{equation}
\label{eq:xi1-FV_cm}
\E[\xi_{1}]
\le K|\bm{b}_1|\int_0^t R_1(s)\,\D s, 
\quad \text{and} \quad 
\E[\xi_{2}]
\le TC_1(\V_2;T)|\Delta\bm{b}|.
\end{equation} 

{\underline{Bounding $\xi_3$.}} 
The triangle inequality, the Lipschitz property of $V$, the compensation formula~\cite[Thm~10.21]{MR4226142}, and Tonelli's theorem, give
\begin{equation}
\label{eq:xi_3-middle-step-FV_cm}
\E[\xi_3] 
\le\E\Bigg[\int_{[0,t]\times (0,\infty) \times \Sp^{d-1}}|\Delta\V(s-)| \rho^{\la}_1(x,\bm{v}) \Xi(\D s,\D x,\D\bm{v})\Bigg]
%&\le \E\Bigg[K \int_{[0,t]\times(0,\infty)\times \Sp^{d-1}} R_1(t-) \rho^{\la}_1(x, \bm{v})\Xi(\D t, \D x, \D\bm{v})\Bigg]\\
\le K\mu(\rho_1^\la) \int_0^t R_1(s)\, \D s.
\end{equation} 

{\underline{Bounding $\xi_4$.}} 
By the BDGN inequality~\cite[Thm~1(b)]{MR394861}, 
\begin{align}\label{eq_Xi_2_cm_2}
\E[\xi_4]%=\E\big[\|\mathcal{M}\|_{[0,T]}^2\big]
&\le C_0 T C_1(\V_2;T) \mu\big(|\Delta\rho^{\la}|\big).
\end{align}

\underline{\textbf{Step 3.}} Assembling in~\eqref{eq:R(T)_gen_bound-FV_cm}--\eqref{eq_Xi_2_cm_2} and applying Gr\"onwall's inequality finally gives the result.
\end{proof}

\subsection{Wasserstein Distance Between Solutions of SDEs with Additive Noise}
\label{subsec:additive-SDE}
Given a L\'evy process $\bm{Y}$ and $\bm{y} \in \R^d$, consider the SDE with additive noise, given by 
\begin{equation}
\label{eq:simple_lin_SDEs_prop}
\Y(s) = \bm{y} + \int_0^s V(\Y(r-)) \, \mathrm{d}r + \bm{Y}(s),
\end{equation}
for Lipschitz continuous function $V:\R^d \mapsto \R^{d}$. Next, we find a Lipschitz principle for the solutions in the $L^q$-Wasserstein distance $\mW_q$ defined in~\eqref{eq:general_wasserstein_trunk} with $\rho(\bm{x},\bm{y})=|\bm{x}-\bm{y}|$ and time horizon $T>0$.

\begin{proposition}
\label{prop:simple_lin_SDE_gen_bound} 
Fix $q \in (0,1]$ and, for $i\in\{1,2\}$, let $\bm{Y}_i$ be a L\'evy process with finite $q$-moment and $\Y_i$ be the solution to the SDE in~\eqref{eq:simple_lin_SDEs_prop} driven by $\bm{Y}_i$ with initial value $\bm{y}_i\in\R^d$. Then, for any $T>0$,
\begin{equation*}
\mW_q(\Y_1,\Y_2) 
\le \big(|\Delta\bm{y}|^q +\mW_{q}(\bm{Y}_1,\bm{Y}_2) \big)
\times\begin{dcases}
\e^{KT}, & q=1,\\
 N(N+1) 2^{N-1}, & q \in (0,1),
\end{dcases}
\end{equation*}
where $N \coloneqq \ceil{TK2^{1/q}} \in \N$.
\end{proposition}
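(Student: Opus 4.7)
The natural object is the difference $\Delta\Y := \Y_1-\Y_2$, which by~\eqref{eq:simple_lin_SDEs_prop} satisfies
\[
\Delta\Y(s)
= \Delta\bm{y} + \int_0^s \bigl(V(\Y_1(r-))-V(\Y_2(r-))\bigr)\D r + \Delta\bm{Y}(s),
\quad s\in[0,T],
\]
where $\Delta\bm{Y}:=\bm{Y}_1-\bm{Y}_2$. The Lipschitz property of $V$ and the monotonicity of the supremum immediately give the pathwise inequality
\[
\|\Delta\Y\|_{[0,t]}
\le |\Delta\bm{y}| + \|\Delta\bm{Y}\|_{[0,t]} + K\int_0^t \|\Delta\Y\|_{[0,r]}\D r,
\qquad t\in[0,T]. \quad (\star)
\]

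For $q=1$, I would fix, for arbitrary $\varepsilon>0$, any coupling of $(\bm{Y}_1,\bm{Y}_2)$ attaining $\mW_1(\bm{Y}_1,\bm{Y}_2)$ up to additive error $\varepsilon$ in the supremum norm on $[0,T]$, take expectations in~$(\star)$, and apply the classical Gr\"onwall inequality to the non-decreasing function $s\mapsto \E\|\Delta\Y\|_{[0,s]}$. This yields $\E\|\Delta\Y\|_{[0,T]} \le (|\Delta\bm{y}|+\E\|\Delta\bm{Y}\|_{[0,T]})\e^{KT}$; since the coupling of the drivers induces a coupling of the solutions, taking the infimum over couplings and letting $\varepsilon\da 0$ finishes the case.

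For $q\in(0,1)$, the direct Gr\"onwall argument fails because $x\mapsto x^q$ is subadditive but leaves behind a term of the form $\bigl(\int \ldots\bigr)^q$ that cannot be pulled inside the integral; this is exactly the obstacle described in Section~\ref{sec:failed_ideas}. The plan is to bypass this by partitioning $[0,T]$ into $N=\lceil TK 2^{1/q}\rceil$ equal subintervals of length $t:=T/N$, chosen precisely so that $(Kt)^q\le 1/2$. Set $\tau_k:=kt$, $E_k:=\|\Delta\Y\|_{[0,\tau_k]}$, and $c:=|\Delta\bm{y}|^q+\|\Delta\bm{Y}\|_{[0,T]}^q$. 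Evaluating $(\star)$ at $\tau_k$, raising to the $q$-th power, and applying the subadditivity inequalities $(a+b+c)^q\le a^q+b^q+c^q$ and $\bigl(K\int_0^{\tau_k}\|\Delta\Y\|_{[0,r]}\D r\bigr)^q \le (Kt)^q\sum_{j=1}^k E_j^q$ yields
\[
E_k^q
\le c + \tfrac{1}{2}\sum_{j=1}^{k} E_j^q,
\qquad \text{hence}\qquad
E_k^q \le 2c + \sum_{j=1}^{k-1} E_j^q,
\]
which by an immediate induction gives $E_k^q\le 2^k c$. Taking $k=N$, passing to expectations under a near-optimal coupling for $\mW_q(\bm{Y}_1,\bm{Y}_2)$, and letting the optimisation error vanish gives a bound of the required shape with a constant of order $2^N$; the stated constant $N(N+1)2^{N-1}$ is then recovered by tracking the $N$ subinterval contributions of the driver separately (instead of crudely dominating each by $\|\Delta\bm{Y}\|_{[0,T]}^q$) and applying the same induction, which introduces the polynomial pre-factor $N(N+1)$ in addition to the combinatorial factor.

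The main obstacle is the $q\in(0,1)$ case: subadditivity of $x\mapsto x^q$ forces the chunking construction, and any such scheme necessarily pays an exponential factor in the number of pieces $N$, explaining the explicit dependence of the constant on $TK2^{1/q}$ through $N$.
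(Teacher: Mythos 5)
Your $q=1$ argument matches the paper's exactly: take expectations in the pathwise linear inequality and apply Gr\"onwall. For $q\in(0,1)$, however, your route is genuinely different from the paper's, and I think it is the cleaner of the two. The paper proves a single-step expectation bound \eqref{eq:wasserstein_first_step} on a short horizon $t^*<K^{-1}$ and then extends to $[0,Nt^*]$ by induction, ``applying the induction hypothesis conditional on $\sigma(\bm{Y}_1(s),\bm{Y}_2(s);s\in[0,t^*])$ and integrating''. That step implicitly treats the pair $(\bm{Y}_1,\bm{Y}_2)$ as though it restarts at $t^*$ with independent, identically distributed increments, which is automatic only for couplings that preserve a bivariate L\'evy structure. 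Your argument sidesteps this entirely: the chain
\[
E_k^q \le c + \tfrac{1}{2}\sum_{j=1}^{k}E_j^q \;\Longrightarrow\; E_k^q\le 2c+\sum_{j=1}^{k-1}E_j^q \;\Longrightarrow\; E_k^q\le 2^k c
\]
holds $\omega$-by-$\omega$, for an arbitrary coupling, with no appeal to the Markov property, and expectations and infima are taken only once at the very end. As a bonus, the pathwise argument delivers the constant $2^N$, which is already smaller than the paper's $N(N+1)2^{N-1}$ (they coincide only at $N=1$), so your cruder version of the bound already establishes the proposition.

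One small correction: your closing remark that the ``stated constant $N(N+1)2^{N-1}$ is then recovered by tracking the $N$ subinterval contributions of the driver separately'' is both unnecessary and pointing in the wrong direction. Refining to track subintervals does not improve $2^N$ to $N(N+1)2^{N-1}$ (that would be a deterioration); rather, $2^N\le N(N+1)2^{N-1}$ for every $N\ge1$, so your bound is strictly stronger for $N\ge2$ and already implies the statement as written. You should delete that sentence and simply note that $2^N\le N(N+1)2^{N-1}$.
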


\begin{proof}
Part~1. Suppose $q=1$, consider any coupling $(\bm{Y}_1,\bm{Y}_2)$ on $[0,T]$, let $R(t)\coloneqq \E[\|\Delta\Y\|_{[0,t]}]$ and $\V_i\coloneqq V(\Y_i)$. The triangle inequality, the Lipschitz continuity of $V$ and Tonelli's theorem, give
\begin{align*}
R(t) &\le |\Delta\bm{y}|+ \E\|\Delta\bm{Y}\|_{[0,t]}
+\E\bigg\| \int_0^\cdot \Delta\V(s-)\,\D s \bigg\|_{[0,t]} \\
&\le |\Delta\bm{y}|+ \E\|\Delta\bm{Y}\|_{[0,t]} + K \int_0^t \E[\|\Delta\Y\|_{[0,s]}]\, \D s
\le|\Delta\bm{y}| + \E\|\Delta\bm{Y}\|_{[0,T]} + K \int_0^t R(s)\, \D s,
\end{align*} 
for all $t\le T$. Hence, Gr\"onwall's inequality gives
\[
R(t) \le \big(|\Delta\bm{y}|
+ \E\|\Delta\bm{Y}\|_{[0,T]}\big) \e^{Kt},
\quad t\le T.
\]
Setting $t=T$ and taking infimum over all couplings of $(\bm{Y}_1,\bm{Y}_2)$ on $[0,T]$ gives the first claim.

Part~2. Suppose $q \in (0,1)$. A direct application of Gr\"onwall's appears to be impossible, so we will use an iterative bound, resulting in a discrete version of Gr\"onwall's inequality. Again, consider any coupling of $(\bm{Y}_1,\bm{Y}_2)$ on $[0,T]$. By the sub-additivity of $x \mapsto x^q$ for all $x \ge 0$ and the Lipschitz continuity of $V$, 
\begin{align*}
\E[\|\Delta\Y\|_{[0,t]}^q] 
&\le |\Delta\bm{y}|^q
+\E[\|\Delta\bm{Y}\|_{[0,t]}^q] 
+ \E\Bigg[\bigg\|\int_0^\cdot \Delta\V(s-) \,\D s \bigg\|^q_{[0,t]}\Bigg]\\
&\le |\Delta\bm{y}|^q
+\E[\|\Delta\bm{Y}\|_{[0,t]}^q] 
+ (Kt)^q \E\big[\|\Delta\Y(s-)\|^q_{[0,t]}\big], 
\end{align*} 
for all $t\le T$. Hence, for all $t\in (0,K^{-1})$, it holds that
\begin{equation}\label{eq:wasserstein_first_step}
\E[\|\Delta\Y\|_{[0,t]}^q] 
\le (1-(Kt)^q)^{-1} \big(|\Delta\bm{y}|^q 
+ \E\big[\|\Delta\bm{Y}\|_{[0,t]}^q\big]\big).
\end{equation} 
An extension of~\eqref{eq:wasserstein_first_step} for all $t>0$, will follow by iterating~\eqref{eq:wasserstein_first_step} in this case. 

Fix $t^* \in (0,K^{-1})$, consider~\eqref{eq:wasserstein_first_step} applied to $t^*$ and set $C = C_{q,K,t^*} \coloneqq (1-(Kt^*)^q)^{-1} \in (1,\infty)$. The goal is now to iterate the inequality and show that, for all $N \ge 1$,
\begin{equation}\label{eq:induction_N_step}
\E\big[\|\Delta\Y\|_{[0,Nt^*]}^q\big]
\le |\Delta\bm{y}|^q \sum_{k=1}^N C^k + \E\big[\|\Delta\bm{Y}\|_{[0,t^*]}^q\big] \sum_{k=1}^N (N-k+1)C^k.
\end{equation} 
To show this, we do a proof by induction. The base case follows from~\eqref{eq:wasserstein_first_step}. Assume~\eqref{eq:induction_N_step} holds for some $N\ge 1$. It remains to show that~\eqref{eq:induction_N_step} holds for $N+1$. 

Applying the induction hypothesis conditional on the $\sigma$-algebra $\sigma(\bm{Y}_1(s),\bm{Y}_2(s);s\in[0,t^*])$ and integrating, we get
\begin{align*}
\E\big[\|\Delta\Y\|_{[t^*,(N+1)t^*]}^q\big]
&\le\E\big[|\Delta\Y(t^*)|^q\big] \sum_{k=1}^N C^k 
+\E\big[\|\Delta\bm{Y}\|_{[0,t^*]}^q\big]\sum_{k=1}^N (N-k+1)C^k\\
&\le C\big(|\Delta\bm{y}|^q + \E\big[\|\Delta\bm{Y}\|_{[0,t^*]}^q\big]\big) \sum_{k=1}^N C^k 
+\E\big[\|\Delta\bm{Y}\|_{[0,t^*]}^q\big]\sum_{k=1}^N (N-k+1)C^k.
\end{align*}
Hence, we may conclude the induction:
\begin{align*}
\E\big[\|\Delta\Y\|_{[0,(N+1)t^*]}^q\big]
&\le \E\big[\|\Delta\Y\|_{[0,t^*]}^q\big]
+ \E\big[\|\Delta\Y\|_{[t^*,(N+1)t^*]}^q\big]\\
&\le |\Delta\bm{y}|^q \sum_{k=1}^{N+1} C^{k}
+ \E\big[\|\Delta\bm{Y}\|_{[0,t^*]}^q\big] \sum_{k=1}^{N+1} ((N+1)-k+1)C^k.
\end{align*} 

Given $T>0$, set $N \coloneqq \ceil{K2^{1/q}T} \in \N$ and $t^*\coloneqq T/N \in (0,K^{-1})$. Then~\eqref{eq:induction_N_step} gives
\[
\E\big[\|\Delta\Y\|_{[0,T]}^q\big]
\le\big(|\Delta\bm{y}|^q 
    + \E\big[\|\Delta\bm{Y}\|_{[0,T]}^q\big]\big) C^{N} 
\sum_{k=1}^N kC^{1-k},
\enskip\text{where}\enskip
\sum_{k=1}^N kC^{1-k}
\le \frac{N(N+1)}{2},
\]
since $T=Nt^*$ and $t^*\le T$. By construction, $(Kt^*)^q=(KT/N)^q \le 1/2$ and hence $C \in (1,2]$. Thus, 
\begin{equation*}
\E\big[\|\Delta\Y\|_{[0,T]}^q\big]
\le \big(|\Delta\bm{y}|^q +\E\big[\|\Delta\bm{Y}\|_{[0,T]}^q\big]\big) N(N+1) 2^{N-1}. 
\end{equation*}
Taking infima over all couplings of $(\bm{Y}_1,\bm{Y}_2)$, we obtain the claim.
\end{proof}

\section{Applications to Domains of Attraction}
\label{sec:application_DoA} 
Let us consider again the SDE solutions $\X$ and $\ZZ$ in~\eqref{eq:main_SDE_setting}:
\begin{equation*}
\X_t(s)\coloneqq \bm{x}+\int_0^s V(\X_t(r-))\D \bm{X}_t(r), \quad 
\ZZ_s\coloneqq \bm{x}+\int_0^s V(\ZZ(r-))\D \bm{Z}(r), \, \text{ for }s \in [0,T] \text{ and }t \in (0,1],
\end{equation*}
where $V$ is Lipschitz, $\bm{X}$ is a L\'evy process in the small-time DoA of the stable process $\bm{Z}$, $g$ is the normalising function of $\bm{X}$, satisfying $\bm{X}(t)/g(t)\cid \bm{Z}(1)$ as $t\da 0$, and $\bm{X}_t\eqd (\bm{X}(st)/g(t))_{s\in[0,T]}$.

The purpose of this section is to study the rate at which $(\X_t(s))_{s \in [0,1]}$ converges in distribution to $(\ZZ(s))_{s \in [0,1]}$ as $t \da 0$, using that the driver $\bm{X}_t$ of $\X_t$ converge to the driver $\bm{Z}$ of $\ZZ$ as $t \da 0$. To study this, we will apply the thinning coupling introduced in the previous sections to both the domain of normal attraction and the domain of non-normal attraction. 

In the ensuing theorem, we assume the existence of some coupling, under which, some probability measure $\p_\theta$ exists. The construction and proof that this probability measure exists is given explicitly in Subsection~\ref{sec:thinning_technic_bound} (see~\eqref{eq:P_theta^t}) below for the thinning coupling, and in Section~\ref{sec:comono_technic_bound} (see~\eqref{eq:P_theta^t_cm}) for the comonotonic coupling. 

\begin{theorem}
\label{thm:roc_tempered_sde}
Let $\bm{X}$ be in the DoA of an $\alpha$-stable process $\bm{Z}$ for some $\alpha \in (0,2)\setminus\{1\}$. Let $\X_t$ and $\ZZ$ be the solutions of the SDEs in~\eqref{eq:main_SDE_setting} driven by $\bm{X}_t$ and $\bm{Z}$, respectively, and fix any $T,\theta>0$. Then, there exist couplings between $\bm{X}_t$ and $\bm{Z}$, and a probability measure $\p_\theta$ (dependent on the choice of coupling) for which $\sup_{t\in(0,1]}f(t)^{-r}\E_\theta\big[\|\X_t-\ZZ\|_{[0,T]}^r\big]<\infty$, where $r\coloneqq\lceil\alpha\rceil\in\{1,2\}$ and $f$ is given in Table~\ref{tab:thm_5.1}.
\vspace{-4mm}
\renewcommand{\arraystretch}{1.2}
\begin{table}[H]
\centering
\begin{tabular}{|C|L|T|} \hline
Range of $\alpha$ & DoNA under~(\nameref{asm:T}) with $H \equiv 1$ & DoNNA under~(\nameref{asm:T}) with $H\in\CSV_0(H_1,H_2)$, %$H_1\in\SV_\infty$, 
$H_2\in\SV_0$\\ \hline 
$\alpha \in (1,2)$ & 
If~\eqref{eq:symmetry_thin} holds, $f(t) \coloneqq t^{p/(2\alpha)}$; otherwise, $f(t)
\coloneqq t^{p/(2\alpha)} 
+ t^{1-1/\alpha}(1
+|\log t|\1_{\{p=\alpha-1\}})$ & $f(t)
=\sqrt{H_2(g(t))}$ \\ \hline
$\alpha \in (0,1)$ & $f(t)
\coloneqq t^{p/\alpha}$ & $f(t)
\coloneqq H_2(g(t))$ \\ \hline
\end{tabular}
\begin{tabular}{|C|L|T|} \hline
Range of $\alpha$ & DoNA under~(\nameref{asm:C}) with $H \equiv 1$ & DoNNA under~(\nameref{asm:C}) with $G\in\CSV_0(G_1,G_2)$, %$G_1\in\SV_\infty$, 
$G_2\in\SV_0$\\ \hline 
$\alpha \in (1,2)$ & If ~\eqref{eq:symmetry_como} holds, $f(t)\coloneqq t^{p/\alpha} %+t^{\delta}
$; otherwise, $f(t)
\coloneqq t^{p/\alpha}+%t^{\delta}+
t^{1-1/\alpha}(1+|\log t|\1_{\{p=\alpha-1\}})$ & $f(t)\coloneqq G_2(t)$ \\ \hline
$\alpha \in (0,1)$ & $f(t)\coloneqq t^{p/\alpha}%+ t^{\delta}
$ & $f(t) \coloneqq G_2(t)$ \\ \hline
\end{tabular}\vspace{-2mm}
\caption{Values of the function $f$ in Theorem~\ref{thm:roc_tempered_sde}.}
\label{tab:thm_5.1}
\end{table}
\end{theorem}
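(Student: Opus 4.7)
The plan is to follow the three-step roadmap outlined in Subsection~\ref{subsec:outline}. The core difficulty is that the drivers $\bm{X}_t$ and $\bm{Z}$ admit moments only of order strictly less than $\alpha$, so the $L^1$/$L^2$ general bounds of Theorems~\ref{thm:gen_bound_thin} and~\ref{thm:gen_bound_como} cannot be applied under the original measure $\p$. First, I would construct a martingale $M_\theta$ via an exponential tempering of the jumps, producing a probability $\p_\theta$ equivalent to $\p$ under which both $\bm{X}_t$ and $\bm{Z}$ enjoy exponential moments of all orders. The tempering is chosen to depend only on the angular-radial structure common to $\bm{X}$ and $\bm{Z}$ (schematically, tempering $\nu_{\bm{X}^\co}+\nu_{\bm{Z}}$ by a factor like $\e^{-\theta|\bm{w}|}$), so that a single measure $\p_\theta$ works uniformly for every $t\in(0,1]$. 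Under the thinning coupling this is the construction~\eqref{eq:M_theta_thinning}; under the comonotonic coupling it is~\eqref{eq:M_theta_como}.

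With $\p_\theta$ in hand, the second step is to apply the general bounds under $\p_\theta$. When $\alpha\in(1,2)$ I would invoke Theorem~\ref{thm:gen_bound_thin}(a) or Theorem~\ref{thm:gen_bound_como}(a) to bound $\E_\theta[\|\X_t-\ZZ\|_{[0,T]}^2]$; when $\alpha\in(0,1)$ (where jumps are of finite variation automatically under~(\nameref{asm:T}) or~(\nameref{asm:C})) I would invoke the Part~(b) variants. The exponential prefactor $\e^{\eta_r(T)}$ from those theorems stays bounded in $t$ because, after tempering and normalisation, the drift, covariance and $r$-th moment of $\bm{X}_t$ under $\p_\theta$ remain bounded uniformly in $t\in(0,1]$. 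This reduces the estimate to controlling the cross-characteristics $|\Delta\bm{a}|$ (or $|\Delta\bm{b}|$), $|\Delta\bm{\Sigma}|$, together with the integrated jump discrepancies $\nu(\Delta f;r)$ for the thinning coupling or $\mu(|\Delta\rho^\la|^r)$ for the comonotonic coupling.

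To finish, I would bound these characteristic integrals as explicit functions of $t$ via Propositions~\ref{prop:integrals_DoA_thin} and~\ref{prop:integrals_DoA_como}. Under Assumption~(\nameref{asm:T}) with $H\equiv 1$, the inequality $|1-h(\bm{w})|\le K_h(1\wedge|\bm{w}|^p)$ splits the jump integral into a small-jump contribution of order $t^{p/\alpha}$ and a large-jump drift residue of order $t^{1-1/\alpha}$, producing the DoNA rows of Table~\ref{tab:thm_5.1}; the balancing conditions~\eqref{eq:symmetry_thin} and~\eqref{eq:symmetry_como} annihilate the drift residue, so only the $p$-exponent survives. Under Assumption~(\nameref{asm:C}), the analogous analysis uses~\eqref{eq:old_assump_(H)_C} with the additional parameter $\delta$, and the logarithmic factor at the critical value $p=\alpha-1$ appears as the boundary where the two rates $t^{p/\alpha}$ and $t^{1-1/\alpha}$ coincide. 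In DoNNA, the controlled slow variation of $H$ (respectively $G$) transfers into the rate $H_2(g(t))$ (respectively $G_2(t)$) via the identity $G(t)=H(g(t))^{1/\alpha}$ in~\eqref{eq:G-H}. Combining these estimates with the inequality $a^r+b^r\le 2^{r-1}(a+b)^r$ when $r=2$ yields the exponent $r=\lceil\alpha\rceil$ in the final bound.

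The main obstacle is producing a tempering martingale $M_\theta$ that is simultaneously (i) independent of $t$, so that all tempered processes lie in a common $L^r(\p_\theta)$ class and the exponential prefactor in Theorems~\ref{thm:gen_bound_thin} and~\ref{thm:gen_bound_como} is uniformly controlled, and (ii) compatible with the chosen coupling, so that the joint structure of $(\bm{X}_t,\bm{Z})$ is preserved under $\p_\theta$. The comonotonic case is particularly delicate because the tempering must act at the level of the radial decomposition~\eqref{eq:radial_decomp} without disturbing the alignment of jump angles encoded by $\sigma$ and the inverses $\rho^\la$. A secondary subtlety is verifying that the balancing conditions~\eqref{eq:symmetry_thin} and~\eqref{eq:symmetry_como} survive (or can be restored after a constant shift) under the tempering, which is exactly what unlocks the sharper rates in the DoNA rows without the $t^{1-1/\alpha}$ contamination.
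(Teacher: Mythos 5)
Your proposal follows essentially the same three-step strategy the paper uses: construct a single $t$-independent exponential tempering $M_\theta$ on the dominating Poisson measure $\Xi^\co$ (namely~\eqref{eq:M_theta_thinning} resp.~\eqref{eq:M_theta_como}), apply Theorems~\ref{thm:gen_bound_thin}/\ref{thm:gen_bound_como} under $\p_\theta$, and control $\kappa_r$ and $\eta_r$ via Propositions~\ref{prop:integrals_DoA_thin} and~\ref{prop:integrals_DoA_como}. One small overclaim: tempering by $\e^{-\theta|\bm{w}|}$ gives finite \emph{polynomial} moments of every order under $\p_\theta$, not exponential moments of all orders (those hold only for exponents below $\theta$), but since the argument requires only $r=\lceil\alpha\rceil\le 2$ moments this does not affect the proof.
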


Remarkably, the convergence rate can be arbitrarily fast (for appropriate $p,\delta>0$) in the case $\alpha\in(0,1)$ and even in the case $\alpha\in(1,2)$ under the additional balancing conditions. This stands in contrast with the general ``hard limits'' (i.e., unimprovable lower bounds) found in~\cite{WassersteinPaper} for Wasserstein distances. This can only be explained by the fact that the family of tight random variables established in the present work are ``far'' from being uniformly integrable.

In the following subsection, we will establish estimates required for the proof of Theorem~\ref{thm:roc_tempered_sde}. Before proceeding, we introduce two functions that will appear in such estimates but do not appear in Theorem~\ref{thm:roc_tempered_sde}. For $H\in\CSV_0(H_1,H_2)$ and $G\in\CSV_0(G_1,G_2)$ as in Assumptions~(\nameref{asm:T}) and~(\nameref{asm:C}), respectively, define the functions
\[
\ov{H}_1(x)
\coloneqq\1_{\{x>1\}}\int_1^x (1+H_1(y))\frac{\D y}{y},
\quad\text{and}\quad
\ov{G}_1(x)
\coloneqq\1_{\{x>1\}}\int_1^x (1+G_1(y))^{\alpha-1} \frac{\D y}{y}.
\]
By Karamata's theorem~\cite[Thm~1.5.9]{MR1015093}, $H_1\in\SV_\infty$ implies $\ov{H}_1\in\SV_\infty$ and $\ov{H}_1(x)/(1+H_1(x))\to\infty$ as $x\to\infty$ (resp. $G_1\in\SV_\infty$ implies $\ov{G}_1\in\SV_0$ and $\ov{G}_1(x)/(1+G_1(x))\to\infty$ as $x\to\infty$). To avoid many more subcases from arising in Theorem~\ref{thm:roc_tempered_sde}, we imposed additional assumptions on $H_2$ and $G_2$.

\subsection{Convergence of the Characteristics}
\label{sec:conv_characteristics}

In preparation for the bounds in this subsection, we note that, under either Assumption~(\nameref{asm:T}) or~(\nameref{asm:C}), if $\int_{D^\co}|\bm{w}|\,\nu_{\bm{X}^\co}(\D\bm{w})<\infty$, we have
\[
\E[\bm{X}^\co(1)]
=\bm{\gamma}_{\bm{X}^\co}
+\int_{D^\co}
\bm{w}\,\nu_{\bm{X}^\co}(\D\bm{w}),
\quad\text{where}\quad
\bm{\gamma}_{\bm{X}^\co}
=\bm{\gamma}_{\bm{X}}
-\int_{D}\bm{w}\,\nu_{\bm{X}^\D}(\D\bm{w}).
\]

\subsubsection{Thinning Coupling}
\label{sec:thinning_technic_bound}

The idea behind the proof of Theorem~\ref{thm:roc_tempered_sde} under Assumption~(\nameref{asm:T}) is to let the L\'evy process $\bm{X}^\co$ corresponding to the measure $\nu_{\bm{X}^\co}$ and $\bm{Z}$ follow the thinning coupling. We then condition on the event that $\bm{X}$ does not have jumps originating from the L\'evy measure $\nu_{\bm{X}^\D}$ on $[0,tT]$, which has probability $\exp(-tT\nu_{\bm{X}^\D}(\R^d_{\bm 0}))\to 1$ as $t\da 0$. Further, we will next construct an equivalent probability measure under which both processes have as many moments as necessary for our comparison inequalities to give the desired convergence rates. This requires the construction of a dominating L\'evy measure and certain calculations with respect to it, which we do next.

We will construct all necessary processes for our results in the same probability space. Recall that, under Assumption~(\nameref{asm:T}), we have~\eqref{eq:G-H}, i.e.:
\begin{equation*}
t=tG(t^\alpha)H(tG(t^\alpha))^{-1/\alpha}
\implies
G(t^\alpha)=H(tG(t^\alpha))^{1/\alpha}
\implies 
G(t)^{\alpha}=H(g(t)).
\end{equation*}
%\noindent(II) 
Under Assumption~(\nameref{asm:T}), $g(t)=t^{1/\alpha}G(t)$ and, by~\eqref{eq:G-H}, the L\'evy measure of $\bm{X}_t\eqd (\bm{X}(st)/g(t))_{s\in[0,T]}$ admits the decomposition
\begin{equation}\label{eq:nu_Xt-decomposition_TNN}
\begin{aligned}
\nu_{\bm{X}_t}(\D\bm{w})
&=t g(t)^d\nu_{\bm{X}}(\D(g(t)\bm{w}))
=
t g(t)^d\nu_{\bm{X}^\co}(\D(g(t)\bm{w}))
+
t g(t)^d\nu_{\bm{X}^\D}(\D(g(t)\bm{w}))\\
&=
G(t)^{-\alpha} H(g(t)|\bm{w}|)h(g(t)\bm{w})\nu_{\bm{Z}}(\D\bm{w})
+t g(t)^d\nu_{\bm{X}^\D}(\D(g(t)\bm{w}))\\
&=
\frac{H(g(t)|\bm{w}|)}{H(g(t))}h(g(t)\bm{w})\nu_{\bm{Z}}(\D\bm{w})
+t g(t)^d\nu_{\bm{X}^\D}(\D(g(t)\bm{w})).
\end{aligned}
\end{equation}
Thus, we may define the measures 
\[
\nu_{\bm{X}^\co_t}(\D\bm{w})
\coloneqq\frac{H(g(t)|\bm{w}|)}{H(g(t))}h(g(t)\bm{w})\nu_{\bm{Z}}(\D\bm{w}),
\quad 
\nu_{\bm{X}^\D_t}(\D\bm{w})
\coloneqq
t g(t)^d\nu_{\bm{X}^\D}(\D(g(t)\bm{w})),
\]
which satisfy $\nu_{\bm{X}^\D_t}(\R^d_{\bm0})=tm_0$ and $\nu_{\bm{X}_t}=\nu_{\bm{X}_t^\co}+\nu_{\bm{X}_t^\D}$. 

Note that, by Assumption~(\nameref{asm:T}), for any $t\in(0,1]$ and $\bm{w}\in\R^d_{\bm0}$ we have
\begin{equation*}
\max\bigg\{\frac{H(g(t)|\bm{w}|)}{H(g(t))}h(g(t)\bm{w}),1\bigg\} 
%&\le \max\bigg\{\frac{H(g(t)|\bm{w}|)(1+h(g(t)\bm{w}))}{H(g(t))},1\bigg\}\\
\le (1+K_h)[1+H_1(|\bm{w}|)H_2(g(t))] 
\le (1+K_h)[1+H_1(|\bm{w}|)].
\end{equation*}
We may define $\nu(\D\bm{w})\coloneqq (1+K_h) (1+ H_1(|\bm{w}|))\nu_{\bm{Z}} (\D\bm{w})$ on $\Borel(\R^d\setminus\{\bm0\})$. Note that $\nu_{\bm{X}_t^\co}\ll\nu$ and $\nu_{\bm{Z}}\ll\nu$ with Radon--Nikodym derivatives $\D\nu_{\bm X_t^\co}/\D\nu$ and $\D\nu_{\bm Z}/\D\nu$ bounded by $1$. Consider two independent Poisson random measures $\Xi^\co$ %=\sum_{n\in\N}\delta_{(\tau^\co_n,\xi^\co_n)}$
and $\Xi^\D$ on the set $(0,T]\times \R^d_{\bm0}$ with corresponding mean measures $\Leb\otimes \nu$ and $\Leb\otimes \nu_{\bm{X}^\D}$. Let $\bm{X}^\D$ be the driftless compound Poisson process with jump measure $\Xi^\D$ and, for all $t\in(0,1]$, let $\bm{X}^\co_t$ and $\bm{Z}$ be L\'evy processes with generating triplets $(\bm{\gamma}_{\bm{X}^\co_t},\bm{0},\nu_{\bm{X}_t^\co})$ and $(\bm{\gamma}_{\bm{Z}},\bm{0},\nu_{\bm{Z}})$, coupled via the thinning coupling of Section~\ref{sec_thinning_coup}, where 
\[
\bm{\gamma}_{\bm{X}^\co_t}
=\frac{t}{g(t)}\bigg(\bm{\gamma}_{\bm{X}}
-\int_{D}\bm{w}\,\nu_{\bm{X}^\D}(\D\bm{w})
\bigg)
+ c_\alpha\int_{\Sp^{d-1}}\bm{v}\int_1^{1/g(t)}
\frac{H(g(t)x)}{H(g(t))}h(g(t)x\bm{v})
\frac{\D x}{x^{\alpha}}\sigma(\D\bm{v}),
\] 
and the Poisson jump measures of both processes are obtained by thinning $\Xi^\co$. Note the identity in law $\bm{X}^\co_t+\bm{X}^\D_t\eqd \bm{X}_t$, where $\bm{X}^\D_t\coloneqq (\bm{X}^\D(st)/g(t))_{s\in[0,T]}$. %=\sum_{n\in\N}\delta_{(\tau_n,\xi_n)}$ 

Fix $\theta>0$ and by Campbell's formula~\cite[p.~28]{MR1207584}, let
\begin{equation}\label{eq:vartheta_defn}
\vartheta
\coloneqq-\log\E\bigg[\exp\bigg(\!-\theta\int_{(0,1]\times D^\co}|\bm{w}|\Xi^\co(\D t,\D\bm{w})\bigg)\bigg]
=\int_{D^\co}\big(1-\e^{-\theta|\bm{w}|}\big)\nu(\D\bm{w})>0.
\end{equation}
Define the equivalent probability measure $\p_\theta\approx \p$ given by the Radon--Nikodym derivative
\begin{equation}
\label{eq:P_theta^t}  
\frac{\D\p_\theta}{\D\p}=\exp\bigg(\vartheta T-\theta\int_{(0,T]\times D^\co}|\bm{w}|\Xi^\co(\D s,\D\bm{w})\bigg).
\end{equation}
Under Assumption~(\nameref{asm:T}) and $\p_\theta$, by~\cite[Thm~33.1]{MR3185174}, the process $\bm{X}^\D_t$ is L\'evy with measure $\nu_{\bm{X}_t^\D}$ and independent of $(\bm{X}^\co_t,\bm{Z})$, whose components are L\'evy with measures given by 
\[
\nu_{\bm{X}_t^\co}^\theta(\D\bm{w})
\coloneqq \e^{-\theta|\bm{w}|\1_{D^\co}(\bm{w})}\nu_{\bm{X}_t^\co}(\D\bm{w}),
\quad\text{and}\quad
\nu_{\bm{Z}}^\theta(\D\bm{w})
\coloneqq\e^{-\theta|\bm{w}|\1_{D^\co}(\bm{w})}\nu_{\bm{Z}}(\D\bm{w}),
\]
respectively, both dominated by $\nu^\theta(\D\bm{w})\coloneqq \e^{-\theta|\bm{w}|\1_{D^\co}(\bm{w})}\nu(\D\bm{w})$. The corresponding Radon--Nikodym derivatives do not depend on $\theta$ and $\Xi^\co$ has mean measure $\Leb\otimes\nu^\theta$ under $\p_\theta$. Since $\p_\theta$ only modifies the ``probability'' of observing a jump, the processes $\bm{X}^\co$ and $\bm{Z}$ have the same natural drifts $\bm{b}_{\bm{X}^\co}=\bm{b}_{\bm{Z}}=\bm{0}$ as their un-tempered counterparts when $\alpha\in(0,1)$. More generally, since these measures agree with their un-tempered counterparts on $D$, the processes $\bm{X}^\co$ and $\bm{Z}$ have the same drift parameters $\bm{\gamma}_{\bm{X}}$ and $\bm{\gamma}_{\bm{Z}}$, respectively. However, the drifts $\bm{a}_{\bm{X}_t^\co}^\theta\coloneqq\E_\theta[\bm{X}_t^\co(1)]$ and $\bm{a}_{\bm{Z}}^\theta\coloneqq\E_\theta[\bm{Z}(1)]$ corresponding to the fully compensated jump measures under $\p_\theta$ generally \emph{do} depend on $\theta$.

\begin{proposition}
\label{prop:integrals_DoA_thin}
Let Assumption~(\nameref{asm:T}) hold, $G$ and $g(t)\coloneqq t^{1/\alpha}G(t)$ be as in~\eqref{eq:G-H}, consider the processes $\bm{X}_t^\co+\bm{X}_t^\D\eqd\bm{X}_t$, $t\in(0,1]$, and $\bm{Z}$ as above and fix $\theta>0$ and $r>\alpha$. Then $\nu^\theta(D^\co)\le\nu(D^\co)<\infty$ and 
\begin{equation}
\label{eq:thin-w^r-dist-mu}
\int_{\R^d_{\bm 0}} |\bm{w}|^r\bigg|\frac{\D\nu_{\bm{Z}}}{\D\nu}(\bm{w})
    -\frac{\D\nu_{\bm{X}_t^\co}}{\D\nu}(\bm{w})\bigg| \nu^\theta(\D\bm{w})
\lesssim H_2(g(t)) + g(t)^p,
\quad\text{as }t\da 0.
\end{equation}
If $\alpha\in(1,2)$ and $\int_{D^\co} |\bm{w}|\nu_{\bm{X}^\co}(\D\bm{w})<\infty$, then as $t\da 0$,
\[
\big|\bm{a}_{\bm{X}^\co_t}^\theta-\bm{a}_{\bm{Z}}^\theta\big|
\lesssim
H_2(g(t)) + t/g(t) + g(t)^p 
+ g(t)^{\alpha-1} \big[1+H_1(1/g(t))
    +\ov{H}_1(1/g(t))\1_{\{p=\alpha-1\}}\big],
\]
where the term $t/g(t)$ vanishes if $\E[\bm{X}^\co(1)]=\bm{0}$ and all but the term $t/g(t)$ vanish if, for all $x>0$, $\int_{\Sp^{d-1}}\bm{v}\,\sigma(\D\bm{v})=\int_{\Sp^{d-1}}h(x\bm{v})\bm{v}\,\sigma(\D\bm{v})=\bm{0}$ ($\bm{a}^\theta_{\bm{X}^\co_t}\equiv \bm{a}^\theta_{\bm{Z}}=\bm{0}$ if both conditions hold). If $\alpha\in(0,1)$, then
\[
\big|\bm{a}_{\bm{X}^\co_t}^\theta-\bm{a}_{\bm{Z}}^\theta\big|
%\le\int_{\R^d_{\bm 0}} |\bm{w}|\cdot\bigg|\frac{\D\nu_{\bm{Z}}}{\D\nu}(\bm{w})
%    -\frac{\D\nu_{\bm{X}_t^\co}}{\D\nu}(\bm{w})\bigg| \nu^\theta(\D\bm{w})
\lesssim H_2(g(t))+g(t)^p,
\quad\text{as }t\da 0,
\]
and we have $\bm{a}^\theta_{\bm{X}^\co_t}\equiv \bm{a}^\theta_{\bm{Z}}=\bm{0}$ if $\int_{\Sp^{d-1}}\bm{v}\,\sigma(\D\bm{v})=\int_{\Sp^{d-1}}h(x\bm{v})\bm{v}\,\sigma(\D\bm{v})=\bm{0}$ for all $x>0$.
\end{proposition}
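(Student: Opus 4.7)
The plan is to reduce every assertion to radial integrals against $\nu_{\bm Z}$ that can be estimated via Karamata's theorem. The inequality $\nu^\theta(D^\co) \le \nu(D^\co)$ is immediate from $\nu^\theta(\D\bm w) = \e^{-\theta|\bm w|\1_{D^\co}(\bm w)}\nu(\D\bm w)$; finiteness of $\nu(D^\co)$ then follows from $H_1\in\SV_\infty$ combined with Karamata's theorem applied to $\int_1^\infty H_1(x)\,x^{-\alpha-1}\,\D x$.

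For~\eqref{eq:thin-w^r-dist-mu}, I would observe that $\D\nu_{\bm Z}/\D\nu$ and $\D\nu_{\bm X^\co_t}/\D\nu$ share the common denominator $(1+K_h)(1+H_1(|\bm w|))$, which cancels precisely against the weight appearing in $\nu^\theta$; the integral reduces to $\int_{\R^d_{\bm 0}} |\bm w|^r|ab-1|\,\e^{-\theta|\bm w|\1_{D^\co}(\bm w)}\,\nu_{\bm Z}(\D\bm w)$ with $a = H(g(t)|\bm w|)/H(g(t))$ and $b = h(g(t)\bm w)$. The algebraic identity $ab-1 = a(b-1) + (a-1)$, coupled with the controlling bounds $|a-1|\le H_1(|\bm w|)H_2(g(t))$ and $|b-1|\le K_h(1\wedge g(t)^p|\bm w|^p)$ from Assumption~(\nameref{asm:T}) together with Remark~\ref{rem:CSV}(i), produces exactly the sum $H_2(g(t)) + g(t)^p$, with integrability near the origin guaranteed by $r>\alpha$ and at infinity by the exponential tempering.

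For the drift bound when $\alpha\in(1,2)$, I would use the zero mean of $\bm Z$ to rewrite $\bm a^\theta_{\bm Z} = -\int_{D^\co}\bm w(1-\e^{-\theta|\bm w|})\nu_{\bm Z}(\D\bm w)$ and the scaling identity $\E[\bm X^\co_t(1)] = (t/g(t))\E[\bm X^\co(1)]$ to cast $\bm a^\theta_{\bm X^\co_t}$ in the same form. The difference then equals $(t/g(t))\E[\bm X^\co(1)]$ plus an integral of the preceding type restricted to $D^\co$. Reusing the same algebraic split and partitioning $D^\co$ at $|\bm w| = 1/g(t)$, Karamata's theorem yields distinct contributions in the three regimes $p<\alpha-1$, $p=\alpha-1$, and $p>\alpha-1$, with the critical case producing the logarithmic factor $\ov H_1(1/g(t))$ built into the integral definition. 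Under~\eqref{eq:symmetry_thin}, the first equality forces $\E[\bm X^\co(1)]=\bm 0$ while the angular identities $\int_{\Sp^{d-1}}\bm v\,\sigma(\D\bm v) = \int_{\Sp^{d-1}}h(y\bm v)\bm v\,\sigma(\D\bm v) = \bm 0$ factor out of the radial decomposition, killing the remaining integrals pointwise in $t$.

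For $\alpha\in(0,1)$, the zero natural drift of $\bm X$ imposed by Assumption~(\nameref{asm:T}) propagates to $\bm X^\co$, and since linear scaling of time and space preserves the zero-natural-drift property of finite-variation pure-jump processes, it also propagates to $\bm X^\co_t$; both means under $\p_\theta$ therefore collapse into single integrals over $\R^d_{\bm 0}$ against $\bm w\,\e^{-\theta|\bm w|\1_{D^\co}(\bm w)}$, and their difference is treated by the same splitting as in the previous step with $r=1$, with integrability at the origin now secured by $\alpha<1$. The balancing case is handled identically via the radial decomposition. The main obstacle I anticipate is the careful bookkeeping in the $\alpha\in(1,2)$ case, namely unifying the three $p$-regimes into a single bound while correctly capturing the logarithmic enhancement $\ov H_1(1/g(t))$ at the critical exponent $p = \alpha - 1$.
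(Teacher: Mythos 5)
Your proposal is correct and follows essentially the same route as the paper's own proof. The algebraic split $ab-1 = a(b-1)+(a-1)$ with $a = H(g(t)|\bm w|)/H(g(t))$ and $b = h(g(t)\bm w)$ is precisely the paper's decomposition in the display~\eqref{eq:Z-vs_X^c-T}, the observation that the factor $(1+K_h)(1+H_1(|\bm w|))$ in $\nu$ cancels against the Radon--Nikodym denominators is what makes that display valid, and the Karamata-based splitting at $|\bm w| = 1/g(t)$ producing the $\overline H_1$ enhancement at the critical exponent $p = \alpha-1$ is exactly the paper's treatment of the $\alpha\in(1,2)$ drift term.
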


\begin{proof}
The slow variation of $H_1$ ensures that the integrability of certain functions depends only on the power of the argument and not on any other characteristics of $H_1$ via~\cite[Prop.~1.5.10]{MR1015093}. Indeed, for instance, the first part of the bound $\nu^\theta(D^\co)\le \nu(D^\co)<\infty$ is elementary and the integrability of the latter follows from the slow variation of $H_1$ and Karamata's theorem~\cite[Thm~1.5.11]{MR1015093}. 

Note from Assumptions~(\nameref{asm:T}) that, for any $t\in(0,1]$, $x>0$ and $\bm{v}\in\Sp^{d-1}$, we have
\begin{equation}
\label{eq:Z-vs_X^c-T}
\begin{split}
\bigg|1-\frac{H(g(t)x)}{H(g(t))}h(g(t)x\bm{v})\bigg|
&\le\bigg|1-\frac{H(g(t)x)}{H(g(t))}\bigg|
+K_h(1+H_1(x))[1\wedge (g(t)x)^p]\\
&\le
H_1(x) H_2(g(t))
+K_h (1+H_1(x)) [1\wedge (g(t)x)^p].
\end{split}
\end{equation}
Thus,~\cite[Prop.~1.5.10]{MR1015093} and~\eqref{eq:Z-vs_X^c-T} give, as $t\da 0$,
\begin{multline*}
%\nonumber
\int_{D} |\bm{w}|^r\bigg|\frac{\D\nu_{\bm{Z}}}{\D\nu}(\bm{w})-\frac{\D\nu_{\bm{X}_t^\co}}{\D\nu}(\bm{w})\bigg| \nu^\theta(\D\bm{w})
%=\int_{D} |\bm{w}|^r\bigg|\frac{\D\nu_{\bm{Z}}}{\D\nu}(\bm{w})-\frac{\D\nu_{\bm{X}_t^\co}}{\D\nu}(\bm{w})\bigg| \nu(\D\bm{w})\\
%\nonumber
=c_\alpha\int_{\Sp^{d-1}}\int_0^1 \bigg|1-\frac{H(g(t)x)}{H(g(t))}h(g(t)x\bm{v})\bigg|\frac{\D x}{x^{\alpha-r+1}}\sigma(\D \bm{v})\\
\lesssim 
H_2(g(t))\int_0^1H_1(x)\frac{\D x}{x^{\alpha-r+1}}
+g(t)^p \int_0^{1} (1+H_1(x))x^p \frac{\D x}{x^{\alpha-r+1}}
\lesssim H_2(g(t))+g(t)^p.
%\label{eq:thinning-D-bound}
\end{multline*} 
%Similarly, since $x\mapsto x^2\e^{-\theta x}$ is bounded by $\e^{-2}(2/\theta)^2$ on $[1,\infty)$, by~\eqref{eq:Z-vs_X^c-T} we get on $D^\co$, as $t\da 0$, 
Similarly, since $x\mapsto x^{p+r}\e^{-\theta x}$ is bounded by $(\e\theta/(p+r))^{-p-r}$ on $[1,\infty)$, by~\eqref{eq:Z-vs_X^c-T} we get, as $t\da 0$, 
\begin{multline*}
\int_{D^\co} |\bm{w}|^r\bigg|\frac{\D\nu_{\bm{Z}}}{\D\nu}(\bm{w})-\frac{\D\nu_{\bm{X}_t^\co}}{\D\nu}(\bm{w})\bigg| \nu^\theta(\D\bm{w})
%\le c_\alpha\bigg(\frac{p+2}{\e\theta}\bigg)^{p+2}
\lesssim
%\int_{D^\co} \bigg|\frac{\D\nu_{\bm{Z}}}{\D\nu}-\frac{\D\nu_{\bm{X}_t^\co}}{\D\nu}\bigg| \D\nu\\
%\lesssim 
\int_{\Sp^{d-1}}\int_1^\infty \bigg|1-\frac{H(g(t)x)}{H(g(t))}h(g(t)x\bm{v})\bigg| 
\frac{\D x}{x^{\alpha+p+1}}\sigma(\D \bm{v})\\
\lesssim 
H_2(g(t))\int_1^\infty H_1(x)
\frac{\D x}{x^{\alpha+p+1}}
+ g(t)\int_1^\infty 
(1+H_1(x))\frac{\D x}{x^{\alpha+1}}
\lesssim 
H_2(g(t))
+g(t)^p,
\end{multline*}
where we used Karamata's theorem~\cite[Thm~1.5.11]{MR1015093}. The second and third terms in the previous display are bounded by multiplicative multiples of $g(t)^{\alpha+p}[1+H_1(1/g(t))]$ and $g(t)^p$, respectively, where the latter is asymptotically larger by the slow variation of $H_1$, the regular variation of $g$ and Potter's bound~\cite[Thm~1.5.6]{MR1015093}. Combining the previous two displays gives the first claim. 

Suppose $\alpha\in(1,2)$. Then $\E[\bm{Z}(1)]=\bm{0}$, so that $\bm{a}_{\bm{X}_t^\co}^\theta\coloneqq\E^\theta[\bm{X}_t^\co(1)]$ and $\bm{a}_{\bm{Z}}^\theta\coloneqq\E^\theta[\bm{Z}(1)]$ are given by:
\begin{align*}
%\label{eq:drift_components_mu_t}
\bm{a}_{\bm{Z}}^\theta 
&= \bm{\gamma}_{\bm{Z}}+\int_{D^\co }\bm{w}\nu_{\bm{Z}}^\theta(\D\bm{w})
=\int_{D^\co }\bm{w}\big(\nu_{\bm{Z}}^\theta(\D\bm{w}) - \nu_{\bm{Z}}(\D\bm{w})\big)
=c_\alpha\int_{\Sp^{d-1}}\bm{v}\,\sigma(\D\bm{v})\int_1^\infty \big(\e^{-\theta x}-1\big)\frac{\D x}{x^{\alpha}}\\
\bm{a}_{\bm{X}_t^\co}^\theta
%&=\bm{\gamma}_{\bm{X}_t^\co}
%+ \int_{D^\co }\bm{w}\, \nu_{\bm{X}_t^\co}^\theta(\D\bm{w})\\
%&= t^{1-1/\alpha}\bigg(\bm{\gamma}_{\bm{X}}
%-\int_{D}\bm{w}\,\nu_{\bm{X}^\D}(\D\bm{w})
%\bigg) 
%+ \frac{c_\alpha}{c}\int_{\Sp^{d-1}}\bm{v}\int_1^\infty
%\big(\e^{-\theta x}-\1_{\{x<t^{-1/\alpha}\}}\big)
%\phi(t^{1/\alpha}x\bm{v})
%\frac{\D x}{x^{\alpha}}\sigma(\D\bm{v})\\
&= \frac{t}{g(t)}\E[\bm{X}^\co(1)]
+\E^\theta[\bm{X}_t^\co(1)]-\E[\bm{X}_t^\co(1)]\\
&=\frac{t}{g(t)}\E[\bm{X}^\co(1)]
+c_\alpha\!\int_{\Sp^{d-1}}\!\!\bm{v}\int_1^\infty\!\!
\big(\e^{-\theta x}-1\big)
\frac{H(g(t)x)}{H(g(t))}h(g(t)x\bm{v})
\frac{\D x}{x^{\alpha}}\sigma(\D\bm{v}). 
\end{align*} 
Proceeding as before, by Assumption~(\nameref{asm:T}) and~\eqref{eq:Z-vs_X^c-T}, we obtain, as $t \da 0$,
\begin{align*}
\big|\bm{a}_{\bm{X}_t^\co}^\theta
-\bm{a}_{\bm{Z}}^\theta\big| 
&\lesssim t/g(t)
+\int_{\Sp^{d-1}}\!\!\bm{v}\int_1^\infty
\big(1-\e^{-\theta x}\big)
\bigg|1-\frac{H(g(t)x)}{H(g(t))}h(g(t)x\bm{v})
\bigg|
\frac{\D x}{x^{\alpha}}\sigma(\D\bm{v})
\\
&\lesssim t/g(t)
+H_2(g(t))\int_1^\infty H_1(x)\frac{\D x}{x^{\alpha}}
+\int_1^\infty (1+H_1(x))[1\wedge (g(t)x)^p]\frac{\D x}{x^{\alpha}}.
\end{align*}
Then, breaking the last integral in two and applying Karamata's theorem, we obtain
\begin{align*}
\int_1^\infty (1+H_1(x))[1\wedge (g(t)x)^p]\frac{\D x}{x^{\alpha}}
&=g(t)^p\int_1^{1/g(t)} (1+H_1(x))\frac{\D x}{x^{\alpha-p}}
+\int_{1/g(t)}^\infty (1+H_1(x))\frac{\D x}{x^{\alpha}}\\
&\lesssim
 g(t)^p + g(t)^{\alpha-1} 
    \big[1+H_1(1/g(t))
    +\ov{H}_1(1/g(t))\1_{\{p=\alpha-1\}}\big].
\end{align*} 
The bound on $|\bm{a}_{\bm{X}_t^\co}^\theta
-\bm{a}_{\bm{Z}}^\theta|$ then follows easily. The statements on the special cases where $\E[\bm{X}^\co(1)]=\bm{0}$ or $\int_{\Sp^{d-1}}\bm{v}\,\sigma(\D\bm{v})=\int_{\Sp^{d-1}}h(x\bm{v})\bm{v}\,\sigma(\D\bm{v})=\bm{0}$ for all $x>0$, follow with ease.

In the finite variation case $\alpha \in (0,1)$, using that $\bm{X}^\co$ (and hence $\bm{X}^\co_t$) and $\bm{Z}$ have zero natural drift, the boundedness of $x\mapsto x^{p+1}\e^{-\theta x}$ by $(\e\theta/(p+1))^{-p-1}$ on $[1,\infty)$, and~\eqref{eq:thin-w^r-dist-mu}--\eqref{eq:Z-vs_X^c-T}, we obtain, as $t \da 0$, 
\begin{multline*}
\big|\bm{a}_{\bm{X}_t^\co}^\theta
-\bm{a}_{\bm{Z}}^\theta\big|
= \bigg|\int_{\R^d_{\bm0}}\bm{w}\big(\nu_{\bm{X}_t^\co}^\theta 
- \nu_{\bm{Z}}^\theta\big)(\D\bm{w})\bigg|
\le \int_{\R^d_{\bm0}} |\bm{w}|\cdot\bigg|\frac{\D\nu_{\bm{Z}}}{\D\nu}(\bm{w})-\frac{\D\nu_{\bm{X}_t^\co}}{\D\nu}(\bm{w})\bigg| \nu^\theta(\D\bm{w})%\\
%= c_\alpha\int_{\Sp^{m-1}}\int_0^\infty \bigg|1-\frac{H(g(t)x)h(g(t)x\bm{v})}{H(g(t))}
%\bigg| \e^{-\theta x\1_{\{x>1\}}}\frac{\D x}{x^{\alpha}}\sigma(\D \bm{v})\\
%\lesssim \int_0^\infty\big(H_2(g(t))H_1(x) + (1+H_1(x))\big[1\wedge (g(t)x)^p\big]\big)
%\frac{\D x}{x^\alpha\max\{1,x^{p+1}\}}
\lesssim H_2(g(t)) + g(t)^p.
\end{multline*}
The special case where $\int_{\Sp^{d-1}}\bm{v}\,\sigma(\D\bm{v})=\int_{\Sp^{d-1}}h(x\bm{v})\bm{v}\,\sigma(\D\bm{v})=\bm{0}$ for all $x>0$ again follows easily.
\end{proof}

\subsubsection{Comonotonic Coupling}
\label{sec:comono_technic_bound}

We start with the following lemma, which is an adaptation of~\cite[Lem.~4.10]{WassersteinPaper} to the setting of the present section, and proved here for completeness.

\begin{lemma}
\label{lem:bounds_h_2_multidim}
Under Assumption~(\nameref{asm:C}), there exists a function $\wt h:(0,\infty)\times \Sp^{d-1}\to (0,\infty)$ and a constant $K_{\wt h} \ge 0$, such that, for all $x>0$ and $\bm{v} \in \Sp^{d-1}$, 
\begin{equation}
\label{eq:rho_X^la}
\rho_{\bm{X}^\co}^{\la}(x,\bm{v})=x^{-1/\alpha}G(1/x)\wt h(x\bm{v}) \quad \text{and}\quad 
\big|( c_\alpha/\alpha )^{1/\alpha}-\wt h(x\bm{v})\big|
\le K_{\wt h}\big(1\wedge (x^{-p/\alpha}G(1/x)^p+x^{-\delta})\big).
\end{equation}
\end{lemma}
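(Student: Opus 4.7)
My plan is to define $\wt h$ by inverting the tail relation in Assumption~(\nameref{asm:C}) so that the claimed identity holds by construction, and then to derive the quantitative bound by combining the two regularity estimates in~\eqref{eq:old_assump_(H)_C}, with a local Lipschitz argument handling the $1/\alpha$-th power.

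For the first part, fix $x>0$ and $\bm v\in\Sp^{d-1}$, and set $y\coloneqq\rho_{\bm X^\co}^{\la}(x,\bm v)$. Since $\rho_{\bm X^\co}([y,\infty),\bm v)=h(y\bm v)H(y)y^{-\alpha}$, the definition of $\rho^{\la}$ gives $h(y\bm v)H(y)y^{-\alpha}=x$, equivalently $y=x^{-1/\alpha}(h(y\bm v)H(y))^{1/\alpha}$. The natural choice is therefore
\[
\wt h(x\bm v)\coloneqq\frac{(h(y\bm v)H(y))^{1/\alpha}}{G(1/x)},
\]
so that $\rho_{\bm X^\co}^{\la}(x,\bm v)=x^{-1/\alpha}G(1/x)\wt h(x\bm v)$ by construction. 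Positivity of $\wt h$ follows from continuity and positivity of $h$ and $H$ on the relevant ranges together with positivity of $G$.

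For the bound, I would first work at the level of $\alpha$-th powers and split
\[
\wt h(x\bm v)^\alpha-\frac{c_\alpha}{\alpha}
= h(y\bm v)\Bigl(\frac{H(y)}{G(1/x)^\alpha}-1\Bigr)
+\Bigl(h(y\bm v)-\frac{c_\alpha}{\alpha}\Bigr).
\]
The second inequality of~\eqref{eq:old_assump_(H)_C} (applied with argument $1/x$) gives $|H(y)^{1/\alpha}/G(1/x)-1|\le K_Q(1\wedge x^{-\delta})$, and raising to the $\alpha$-th power on bounded quantities yields $|H(y)/G(1/x)^\alpha-1|\lesssim 1\wedge x^{-\delta}$. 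The first inequality of~\eqref{eq:old_assump_(H)_C} gives $|h(y\bm v)-c_\alpha/\alpha|\le K_h(1\wedge y^p)$, and inserting $y\lesssim x^{-1/\alpha}G(1/x)$ (using an \emph{a priori} uniform upper bound on $\wt h$, which is what introduces the circularity below) yields $1\wedge y^p\lesssim 1\wedge(x^{-p/\alpha}G(1/x)^p)$. Using the boundedness of $h$ and the elementary inequality $\min(1,A)+\min(1,B)\le 2\min(1,A+B)$, I would conclude $|\wt h(x\bm v)^\alpha-c_\alpha/\alpha|\lesssim 1\wedge(x^{-\delta}+x^{-p/\alpha}G(1/x)^p)$. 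Finally, since $\wt h$ is uniformly bounded above and, on the range of interest, bounded away from $0$, the mean-value theorem applied to $t\mapsto t^{1/\alpha}$ converts this into the target estimate $|\wt h(x\bm v)-(c_\alpha/\alpha)^{1/\alpha}|\le K_{\wt h}(1\wedge(x^{-p/\alpha}G(1/x)^p+x^{-\delta}))$.

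The main obstacle I foresee is the mild circularity in the second step: the bound on $h(y\bm v)-c_\alpha/\alpha$ uses $y=\rho^{\la}(x,\bm v)=x^{-1/\alpha}G(1/x)\wt h(x\bm v)$, which itself contains $\wt h$. I would resolve this by first establishing, from Assumption~(\nameref{asm:C}) alone, a crude uniform a priori bound $0<c_0\le\wt h(x\bm v)\le M$ on the relevant domain (using $|h-c_\alpha/\alpha|\le K_h$ and $|H/G^\alpha-1|\le CK_Q$), and then re-inserting this bound into the estimate above. The other minor point is justifying the $\alpha$-th power estimate $|u^\alpha-1|\lesssim|u-1|$ for $u$ bounded, which is a direct mean value argument.
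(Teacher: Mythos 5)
Your overall strategy matches the paper's: invert the tail of $\rho_{\bm X^\co}$ to define $\wt h$ via $\rho^\la_{\bm X^\co}(x,\bm v)=x^{-1/\alpha}G(1/x)\wt h(x\bm v)$, derive $\wt h(x\bm v)=\big(H(\rho^\la(x,\bm v))^{1/\alpha}/G(1/x)\big)\,h(\rho^\la(x,\bm v)\bm v)^{1/\alpha}$, use the two inequalities in~\eqref{eq:old_assump_(H)_C} to control the deviation, and use an a priori upper bound on $\wt h$ to bound $\rho^\la(x,\bm v)$ inside the argument of $h$. The route you take through the power is different: you pass to $\wt h^\alpha-c_\alpha/\alpha=h\big(H/G^\alpha-1\big)+\big(h-c_\alpha/\alpha\big)$ and return to $\wt h$ by the mean value theorem applied to $t\mapsto t^{1/\alpha}$; the paper instead writes $(\alpha/c_\alpha)^{1/\alpha}\wt h=y\,z^{1/\alpha}$ with $y=H^{1/\alpha}/G$, $z=(\alpha/c_\alpha)h$, and bounds $|y z^{1/\alpha}-1|\le \wt M|z-1|+|y-1|z^{1/\alpha}$ using the one-sided elementary inequality $|z^{1/\alpha}-1|\le\wt M|z-1|$ on a bounded interval.

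The concrete gap is in the final paragraph, where you resolve the circularity by asserting a \emph{uniform} a priori bound $0<c_0\le\wt h(x\bm v)\le M$ from Assumption~(\nameref{asm:C}) alone, citing $|h-c_\alpha/\alpha|\le K_h$ and $|H/G^\alpha-1|\lesssim K_Q$. These only yield $h\ge c_\alpha/\alpha-K_h$ and $H^{1/\alpha}/G\ge 1-K_Q$, which are not positive unless $K_h<c_\alpha/\alpha$ and $K_Q<1$ --- neither is assumed, and indeed~(\nameref{asm:C}) only requires $h\ge 0$, so $h$ (hence $\wt h$) may vanish on part of its domain. Without a uniform lower bound, the mean value step for $t\mapsto t^{1/\alpha}$ (whose derivative blows up at $0$ when $\alpha>1$) is unjustified. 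Your argument can be repaired by splitting into two regions: for $x$ bounded (where $1\wedge(x^{-p/\alpha}G(1/x)^p+x^{-\delta})=1$) the claim is simply $|\wt h-(c_\alpha/\alpha)^{1/\alpha}|\le$ const, which follows from the upper bound on $\wt h$ alone; for $x$ large you first show $|\wt h^\alpha-c_\alpha/\alpha|$ is small using the fine estimates (so $\wt h^\alpha\ge c_\alpha/(2\alpha)$, say), and only then apply the mean value theorem. The paper sidesteps this entirely: its elementary inequality $|z^{1/\alpha}-1|\le\wt M|z-1|$ for $z\in[0,\wt K]$ never requires a lower bound on $z$, because the quotient $|z^{1/\alpha}-1|/|z-1|$ stays bounded as $z\downarrow 0$ (both tend to $1$), so no region-splitting is needed.
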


\begin{proof}
By continuity of $x\mapsto \rho_{\bm{X}^\co}([x,\infty),\bm{v})$, we have $\rho_{\bm{X}^{\co}}([\rho_{\bm{X}^\co}^{\la}(x,\bm{v}),\infty),\bm{v})
=x$ for all $\bm{v}\in \Sp^{d-1}$ and $x>0$. Hence, for all $\bm{v}\in \Sp^{d-1}$ and $x>0$,
\begin{align*}
x
&=h(\rho_{\bm{X}^\co}^{\la}(x,\bm{v})\bm{v}) H(\rho_{\bm{X}^\co}^{\la}(x,\bm{v}))^\alpha\rho_{\bm{X}^\co}^{\la}(x,\bm{v})^{-\alpha},
\quad\text{implying}\\
\rho_{\bm{X}^\co}^{\la}(x,\bm{v})
&=x^{-1/\alpha}H(\rho_{\bm{X}^\co}^{\la}(x,\bm{v}))h(\rho_{\bm{X}^\co}^{\la}(x,\bm{v})\bm{v})^{1/\alpha} =x^{-1/\alpha}G(1/x)\frac{H(\rho_{\bm{X}^\co}^{\la}(x,\bm{v}))}{G(1/x)}h(\rho_{\bm{X}^\co}^{\la}(x,\bm{v})\bm{v})^{1/\alpha}.
\end{align*}
Thus, the first part of~\eqref{eq:rho_X^la} holds if 
$\wt h(x\bm{v})\coloneqq (H(\rho_{\bm{X}^\co}^{\la}(x,\bm{v}))/G(1/x)) h(\rho_{\bm{X}^\co}^{\la}(x,\bm{v})\bm{v})^{1/\alpha} \in (0,\infty)$.

Assume that~\eqref{eq:old_assump_(H)_C} in Assumption~(\nameref{asm:C}) holds for some $p,\delta>0$. Since $h$ is bounded by $K_h$ and by~\eqref{eq:old_assump_(H)_C}, it follows that $\wt h(x,\bm{v})\le (K_Q+1) (c_\alpha/\alpha+K_h)^{1/\alpha}$ for all $x>0$ and $\bm{v}\in \Sp^{d-1}$. Moreover, define $\wt K \coloneqq (1+\alpha K_h/c_\alpha)^{1/\alpha}$ and $\wt M \coloneqq \max\{|\wt K^r-1|/|\wt K-1|,1\}$, then the elementary inequality $|z^r-1|\le \wt M |z-1|$ for any $r \ge 0$ and $z\in [0,\wt K]$ and the triangle inequality yield $|y z^r-1|\le \wt M |z-1|+|y-1|z^r$, which implies for all $x>0$ and $\bm{v} \in \Sp^{d-1}$, that
\begin{align*}
\bigg|\bigg(\frac{c_\alpha}{\alpha}\bigg)^{1/\alpha}&-\wt h(x,\bm{v})\bigg|
=\left(\frac{c_\alpha}{\alpha}\right)^{1/\alpha} \left|1-\left(\frac{\alpha}{c_\alpha}h(\rho_{\bm{X}^\co}^{\la}(x,\bm{v})\bm{v})\right)^{1/\alpha}\frac{H(\rho_{\bm{X}^\co}^{\la}(x,\bm{v}))}{G(1/x)}  \right| \\
&\le \wt M \left(\frac{c_\alpha}{\alpha}\right)^{1/\alpha-1} \left|\frac{c_\alpha}{\alpha}-h(\rho_{\bm{X}^\co}^{\la}(x,\bm{v})\bm{v})\right|+\bigg|\frac{H(\rho_{\bm{X}^\co}^{\la}(x,\bm{v}))}{G(1/x)}-1\bigg| 
\left(\left(\frac{c_\alpha}{\alpha}\right)^2+\frac{c_\alpha}{\alpha}K_h\right)^{1/\alpha}\\
&\le C \rho_{\bm{X}^\co}^{\la}(x,\bm{v})^p+C x^{-\delta} \le C' x^{-p/\alpha}G(1/x)^p \left( \frac{H(\rho_{\bm{X}^\co}^{\la}(x,\bm{v}))}{G(1/x)}\right)^p h(\rho_{\bm{X}^\co}^{\la}(x,\bm{v})\bm{v})^{p/\alpha}+Cx^{-\delta}\\
& \le %K_h(1+K_h)^{p/\alpha}(1+K_Q)^p(c_\alpha/\alpha)^{p/\alpha}
C'' x^{-p/\alpha}G(1/x)^p+%K_Q(1+K_h)^{1/\alpha}
C x^{-\delta},
\end{align*} for some constants $C, C',C''>0$, concluding the proof. %The proof is concluded by choosing $K_{\wt h} = \max\{C'',C,1,(1+\alpha K_h/c_\alpha)^{1/\alpha}\}%\coloneqq \max\{(c_\alpha/\alpha)^{p/\alpha}K_h(1+K_h)^{p/\alpha}(1+K_Q)^p,(K_Q+1)(1+K_h)^{1/\alpha},1\}>0$.% wuch that  gives the last part of~\eqref{eq:rho_X^la}.
\end{proof}

A simple consequence of Lemma~\ref{lem:bounds_h_2_multidim} is that, under Assumption~(\nameref{asm:C}), the process $\bm{X}^\co$ has as many moments as $\bm{Z}$ since $\wt h$ is bounded and $G(x)=1$ for $x\le 1$.

As with the thinning coupling above, we will construct all necessary auxiliary processes in the same probability space. Let $\bm{X}^\D$ be the driftless compound Poisson process with jump measure $\Xi^\D$ and, for all $t\in(0,1]$, let $\bm{X}^\co_t$ and $\bm{Z}$ be L\'evy processes with generating triplets $(\bm{\gamma}_{\bm{X}^\co_t},\bm{0},\nu_{\bm{X}_t^\co})$ and $(\bm{\gamma}_{\bm{Z}},\bm{0},\nu_{\bm{Z}})$, coupled via the comonotonic coupling of Section~\ref{sec:comonot_coup}, where 
\begin{align*}
\bm{\gamma}_{\bm{X}^\co_t}
&=\frac{t}{g(t)}\bm{\gamma}_{\bm{X}^\co}
+ \int_{\Sp^{d-1}} \int_1^{1/g(t)}  %\1_{(D/g(t))\setminus D}(x\bm{v}) 
x \bm{v} \rho_{\bm{X}_t^\co}(
 \D x,\bm{v})
 \sigma(\D \bm{v})\\
&=\frac{t}{g(t)}\bigg(\bm{\gamma}_{\bm{X}}
-\int_{D}\bm{w}\,\nu_{\bm{X}^\D}(\D\bm{w})
\bigg)
+ \int_{\Sp^{d-1}} \int_{\rho^{\la}_{\bm{X}_t^\co}(1,\bm{v})}^{\rho^{\la}_{\bm{X}_t^\co}(1/g(t),\bm{v})}  %\1_{(D/g(t))\setminus D}(x\bm{v}) 
 \bm{v} \rho^{\la}_{\bm{X}_t^\co}(x,\bm{v}) %\rho_{\bm{X}_t^\co}(
 \D x\,%,\bm{v})
 \sigma(\D \bm{v}),
\end{align*}
and the Poisson jump measures of both processes are obtained by transforming the atoms of $\Xi^\co$. Note the identity in law $\bm{X}^\co_t+\bm{X}^\D_t\eqd \bm{X}_t$, where $\bm{X}^\D_t\coloneqq (\bm{X}^\D(st)/g(t))_{s\in[0,T]}$. %=\sum_{n\in\N}\delta_{(\tau_n,\xi_n)}$ 

Next, as in the case of thinning, we need to do exponential tempering so that we may apply Theorem~\ref{thm:gen_bound_como} above. To that end, for any $\theta>0$ we define (by Campbell's formula~\cite[p.~28]{MR1207584})
\begin{equation}\label{eq:vartheta_defn_cm}
\vartheta \coloneqq -\log \E \bigg[\exp\bigg(-\theta\int_{[0,1]\times (0,1)\times \Sp^{d-1}} x^{-1} 
    \Xi^\co(\D t,\D x,\D\bm{v}) \bigg)\bigg]= \frac{1}{\alpha}\int_1^\infty \big(1-\e^{-\theta x} \big)  \frac{\D x}{x^2}>0. %\frac{1}{T\int_0^\infty \e^{-\theta x^{-1/\alpha}\1_{(0,1)}(x)} \D x}.
\end{equation}
Define next, the equivalent probability measure $\p_\theta\approx \p$ given by the Radon--Nikodym derivative
\begin{equation}
\label{eq:P_theta^t_cm}  
\frac{\D\p_\theta}{\D\p}=\exp\bigg(\vartheta T-\theta\int_{[0,T]\times (0,1)\times \Sp^{d-1}} x^{-1} \Xi^\co(\D t, \D x,\D\bm{v})\bigg).
\end{equation}
Under Assumption~(\nameref{asm:C}) and $\p_\theta$, by~\cite[Thm~33.1]{MR3185174}, the process $\bm{X}^\D_t$ is L\'evy with measure $\nu_{\bm{X}_t^\D}$ and independent of $(\bm{X}^\co_t,\bm{Z})$, whose components are L\'evy with measures given by 
\begin{align*}
\nu_{\bm{X}_t^\co}^\theta(B)
&\coloneqq %\int_{\Sp^{d-1}}\int_0^\infty \1_{B}(x\bm{v}) \exp\left\{-\theta \rho_{\bm{X}_t^\co}([x,\infty),\bm{v})^{-1/\alpha}\1_{(0,1)}(\rho_{\bm{X}_t^\co}([x,\infty),\bm{v}))\right\} \rho_{\bm{X}_t^{\co}}(\D x,\bm{v})\sigma(\D\bm{v}),\\&=
\int_{\Sp^{d-1}}\int_0^\infty \1_{B}(\rho^{\la}_{\bm{X}_t^\co}(x,\bm{v})\bm{v}) \exp\big(-\1_{(0,1)}(x)\theta /x\big) \D x\, \sigma(\D\bm{v}),\\
\nu_{\bm{Z}}^\theta(B)
%&= \int_{\Sp^{-1}} \int_0^\infty \1_{B}(x\bm{v}) \exp\left\{-\theta  (\alpha/c_\alpha)^{1/\alpha} x%\rho_{\bm{Z}}([x,\infty),\bm{v})^{-1/\alpha}\1_{((c_\alpha/\alpha)^{1/\alpha},\infty)}(x) \right\} \rho_{\bm{Z}}(\D x, \bm{v})\sigma(\D v)\\
&\coloneqq \int_{\Sp^{-1}} \int_0^\infty \1_{B}(\rho^{\la}_{\bm{Z}}(x)\bm{v}) \exp\big(-\1_{(0,1)}(x)\theta /x\big)  %\rho_{\bm{Z}}(
\D x\,%, \bm{v})
\sigma(\D \bm{v}),
\end{align*}
respectively, for $B\in \mathcal{B}(\R^d_{\bm{0}})$. Since the tempering only affects large jumps, the processes $\bm{X}^\co$ and $\bm{Z}$ have the same drift parameters $\bm{\gamma}_{\bm{X}}$ and $\bm{\gamma}_{\bm{Z}}$, respectively, as well as natural drifts $\bm{b}_{\bm{X}^\co}=\bm{b}_{\bm{Z}}=\bm{0}$ when $\alpha\in(0,1)$. However, the drifts $\bm{a}_{\bm{X}_t^\co}^\theta\coloneqq\E_\theta[\bm{X}_t^\co(1)]$ and $\bm{a}_{\bm{Z}}^\theta\coloneqq\E_\theta[\bm{Z}(1)]$ corresponding to the fully compensated jump measures under $\p_\theta$ generally depend on $\theta$. 

\begin{proposition}
\label{prop:integrals_DoA_como} 
Let Assumption~(\nameref{asm:C}) hold, $G$ be as in~\eqref{eq:defn_G_C} with $G \in \CSV_0(G_1,G_2)$ for auxiliary functions $G_1$ and $G_2$, consider the processes $\bm{X}_t^\co+\bm{X}_t^\D\eqd\bm{X}_t$, $t\in(0,1]$, and $\bm{Z}$ as above and fix $\theta>0$. Assume $\alpha \in (0,2)\setminus\{1\}$ and let $r>\alpha$. Then as $t \da 0$, we have
\begin{equation}\label{eq:bound_como_r_dist}
    \int_{\Sp^{d-1}}\int_0^\infty
|\rho_{\bm{X}_t^\co}^{\la}(x,\bm{v})-\rho_{\bm{Z}}^{\la}(x)|^r \e^{-\1_{(0,1)}(x)\theta /x} \D x \, \sigma(\D \bm{v}) 
\lesssim G_2(t)^r + g(t)^{r p}+ t^{r \delta}.
\end{equation}    
Moreover, if $\alpha \in (1,2)$ and $\int_{D^\co}|\bm{w}|\nu_{\bm{X}^\co}(\D\bm{w})<\infty$, then as $t \da 0$
\begin{equation*}%\label{eq:bound_como_a_dist}
|\bm{a}_{\bm{X}_t^\co}^\theta
    -\bm{a}_{\bm{Z}}^\theta| 
\lesssim t/g(t)+G_2(t)
+G(t)^p\big(t^{p/\alpha}+t^{1-1/\alpha}[1+G_1(1/t)^p +\ov{G}_1(1/t)\1_{\{p=\alpha-1\}}]\big) +t^{\delta},
\end{equation*} 
where the term $t/g(t)$ vanishes if $\E[\bm{X}^\co(1)]=\bm{0}$ and all but the term $t/g(t)$ vanish if, for all $x>0$, $\int_{\Sp^{d-1}}\bm{v}\,\sigma(\D\bm{v})=\int_{\Sp^{d-1}}\rho^{\la}_{\bm{X}_t^\co}(x,\bm{v})\bm{v}\,\sigma(\D\bm{v})=\bm{0}$ ($\bm{a}_{\bm{X}^\co_t}^\theta\equiv \bm{a}_{\bm{Z}}^\theta=\bm{0}$ if both conditions hold). If $\alpha\in(0,1)$, then
\[
\big|\bm{a}_{\bm{X}^\co_t}^\theta-\bm{a}_{\bm{Z}}^\theta\big|
%\le\int_{\R^d_{\bm 0}} |\bm{w}|\cdot\bigg|\frac{\D\nu_{\bm{Z}}}{\D\nu}(\bm{w})
%    -\frac{\D\nu_{\bm{X}_t^\co}}{\D\nu}(\bm{w})\bigg| \nu^\theta(\D\bm{w})
\lesssim G_2(t) + g(t)^{p} + t^{\delta},
\quad\text{as }t\da 0,
\]
and we have $\bm{a}_{\bm{X}^\co_t}^\theta \equiv \bm{a}_{\bm{Z}}^\theta = \bm{0}$ if $\int_{\Sp^{d-1}}\bm{v}\,\sigma(\D\bm{v}) = \int_{\Sp^{d-1}}\rho^{\la}_{\bm{X}_t^\co}(x,\bm{v})\bm{v}\,\sigma(\D\bm{v})=\bm{0}$ for all $x>0$.
\end{proposition}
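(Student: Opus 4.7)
The plan is to first derive an explicit formula for the rescaled inverse radial tail, and then exploit a telescoping decomposition to reduce all computations to Karamata-type integrations. From the scaling identity $\nu_{\bm{X}_t^\co}(\D\bm{w})=tg(t)^d\nu_{\bm{X}^\co}(\D(g(t)\bm{w}))$ and~\eqref{eq:radial_tail_decomp_C}, I would first obtain $\rho_{\bm{X}_t^\co}([y,\infty),\bm{v})=t\rho_{\bm{X}^\co}([g(t)y,\infty),\bm{v})$, then invert and use Lemma~\ref{lem:bounds_h_2_multidim} (together with $g(t)=t^{1/\alpha}G(t)$) to arrive at the closed form
\[
\rho_{\bm{X}_t^\co}^{\la}(x,\bm{v})=g(t)^{-1}\rho_{\bm{X}^\co}^{\la}(x/t,\bm{v})=x^{-1/\alpha}\frac{G(t/x)}{G(t)}\wt h(\tfrac{x}{t}\bm{v}).
\]
Since $\rho_{\bm{Z}}^{\la}(x)=(c_\alpha/\alpha)^{1/\alpha}x^{-1/\alpha}$, a telescoping identity then yields the master decomposition
\[
\rho_{\bm{X}_t^\co}^{\la}(x,\bm{v})-\rho_{\bm{Z}}^{\la}(x)=x^{-1/\alpha}\bigg[\frac{G(t/x)}{G(t)}\bigl(\wt h(\tfrac{x}{t}\bm{v})-(c_\alpha/\alpha)^{1/\alpha}\bigr)+(c_\alpha/\alpha)^{1/\alpha}\Bigl(\frac{G(t/x)}{G(t)}-1\Bigr)\bigg],
\]
on which all subsequent bounds rest, together with the CSV estimate $|G(t/x)/G(t)-1|\le G_1(1/x)G_2(t)$, Remark~\ref{rem:CSV}(i)'s bound $G(t/x)/G(t)\le 1+G_1(1/x)$, and the Lipschitz-type control on $\wt h$ from Lemma~\ref{lem:bounds_h_2_multidim}.

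For~\eqref{eq:bound_como_r_dist}, I would raise the decomposition to the $r$-th power and apply QM--AM. The \emph{$G$-telescope} contribution is then $\lesssim G_2(t)^r\int_0^\infty x^{-r/\alpha}G_1(1/x)^r\e^{-\1_{(0,1)}(x)\theta/x}\D x \lesssim G_2(t)^r$, with finiteness of the integral by Karamata's theorem~\cite[Thm~1.5.11]{MR1015093} ($r>\alpha$ handles $x\to\infty$, the tempering handles $x\da 0$). For the \emph{$\wt h$-piece}, Lemma~\ref{lem:bounds_h_2_multidim} and another QM--AM, combined with $G(t/x)^{rp}\le(1+G_1(1/x))^{rp}G(t)^{rp}$, split the contribution into $\lesssim t^{rp/\alpha}G(t)^{rp}\int_0^\infty x^{-r(1+p)/\alpha}(1+G_1(1/x))^{r(p+1)}\e^{-\1_{(0,1)}(x)\theta/x}\D x\lesssim g(t)^{rp}$ (the subregion $x<t$ is exponentially small via $\e^{-\theta/x}$, and Karamata controls the rest) and a $t^{r\delta}$-piece bounded by an analogous argument.

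For the drift bound, I would use the L\'evy--It\^o decomposition~\eqref{eq:thinning_coupling_FM_CM}. Under $\p_\theta$ the intensity of $\Xi^\co$ is $\D s\otimes \e^{-\1_{(0,1)}(x)\theta/x}\D x\otimes\sigma(\D\bm{v})$, and for $\alpha\in(1,2)$, $\E[\bm{Z}(1)]=\bm{0}$ and $\E[\bm{X}_t^\co(1)]=(t/g(t))\E[\bm{X}^\co(1)]$ yield
\[
\bm{a}_{\bm{X}_t^\co}^\theta-\bm{a}_{\bm{Z}}^\theta=\tfrac{t}{g(t)}\E[\bm{X}^\co(1)]+\int_{\Sp^{d-1}}\int_0^1\bm{v}\bigl(\rho_{\bm{X}_t^\co}^{\la}-\rho_{\bm{Z}}^{\la}\bigr)(\e^{-\theta/x}-1)\D x\,\sigma(\D\bm{v}),
\]
while for $\alpha\in(0,1)$ the zero natural drifts yield the analogue with $\e^{-\1_{(0,1)}(x)\theta/x}$ in place of $(\e^{-\theta/x}-1)$ and the integral extending to all $x>0$. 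Inserting the master decomposition, the $G$-telescope contributes $\lesssim G_2(t)$, while the $\wt h$-piece splits into a $G(t)^p t^{p/\alpha}$-scaled integral $\int_t^1 x^{-(1+p)/\alpha}(1+G_1(1/x))^{p+1}\D x$ that exhibits the $p\gtrless\alpha-1$ dichotomy by Karamata (with the logarithmic boundary $\overline{G}_1(1/t)\1_{\{p=\alpha-1\}}$ arising precisely at the critical exponent), plus a $t^\delta$-piece bounded analogously. The special cases then follow immediately by Fubini on $\sigma$: the balancing identities kill both $\sigma$-integrals, and $\E[\bm{X}^\co(1)]=\bm{0}$ removes the $t/g(t)$ contribution.

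The hard part will be the boundary case $p=\alpha-1$ in the $\alpha\in(1,2)$ drift bound, where the integral $\int_t^1 x^{-1}(1+G_1(1/x))^{p+1}\D x$ is exactly at the edge of integrability and produces the $\overline{G}_1(1/t)$ factor. Carefully tracking which slowly varying factors are absorbed by Karamata's theorem and which require Potter-type bounds~\cite[Thm~1.5.6]{MR1015093}---especially extracting the pre-factor $G(t)^p$ via $G(t/x)^p\le(1+G_1(1/x))^pG(t)^p$---constitutes the most delicate bookkeeping. All remaining manipulations are routine applications of regular variation.
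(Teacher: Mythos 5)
Your proposal follows the paper's proof almost exactly: same inversion formula $\rho_{\bm{X}_t^\co}^{\la}(x,\bm{v})=x^{-1/\alpha}\tfrac{G(t/x)}{G(t)}\wt h(\bm{v}x/t)$ from Lemma~\ref{lem:bounds_h_2_multidim}, same split into a $G$-ratio piece and a $\wt h$-error piece, same Karamata bookkeeping with the split at $x=t$ for the drift bound and the tempering $\e^{-\theta/x}$ absorbing the small-$x$ singularity, and the same identification of the critical exponent $p=\alpha-1$.

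One bookkeeping detail will not quite reproduce the claimed $\ov{G}_1(1/t)$ factor as stated. Your telescoping
\[
\tfrac{G(t/x)}{G(t)}\wt h - \bigl(\tfrac{c_\alpha}{\alpha}\bigr)^{1/\alpha}
=\tfrac{G(t/x)}{G(t)}\bigl(\wt h-(\tfrac{c_\alpha}{\alpha})^{1/\alpha}\bigr)
+(\tfrac{c_\alpha}{\alpha})^{1/\alpha}\bigl(\tfrac{G(t/x)}{G(t)}-1\bigr)
\]
attaches the \emph{unbounded} ratio $G(t/x)/G(t)\le 1+G_1(1/x)$ to the $\wt h$-error. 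After pulling out $G(t)^p$ via $G(t/x)^p\le(1+G_1(1/x))^pG(t)^p$, the $\wt h$-piece thus carries $(1+G_1(1/x))^{p+1}$, and at $p=\alpha-1$ your critical integral is $\int_1^{1/t}(1+G_1(y))^\alpha\,y^{-1}\D y$, which can be strictly larger than $\ov{G}_1(1/t)=\int_1^{1/t}(1+G_1(y))^{\alpha-1}\,y^{-1}\D y$ when $G_1$ is unbounded at $\infty$. The paper instead telescopes as $\wt h\cdot(\tfrac{G(t/x)}{G(t)}-1)+(\wt h-(\tfrac{c_\alpha}{\alpha})^{1/\alpha})$, placing the \emph{bounded} $\wt h$ in front of the $G$-difference, so the $\wt h$-error integral carries $(1+G_1(1/x))^p$ and at $p=\alpha-1$ lands exactly on $\ov{G}_1(1/t)$. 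Swapping to this telescoping removes the discrepancy and is a strictly local change; everything else in your argument (Karamata for $I_1$ using $r>\alpha$, the $\int_0^t$ vs.\ $\int_t^1$ split in the drift bound, the Fubini-on-$\sigma$ argument for the balancing special cases, and the $\alpha\in(0,1)$ formula without the compensation indicator) is sound and matches the paper.
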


In our applications of~\eqref{eq:bound_como_r_dist} we will choose $r=\lceil\alpha\rceil$. 

\begin{proof} 
From Lemma~\ref{lem:bounds_h_2_multidim}, we have
\begin{equation}\label{eq:time_changed_right_inverse}
\rho_{\bm{X}_t^\co}^{\la}(x,\bm{v})
=\rho_{\bm{X}^\co}^{\la}(x/t,\bm{v})/g(t)
=x^{-1/\alpha} \frac{G(t/x)}{G(t)}\wt h(\bm{v} x/t).
\end{equation}
Throughout, let $r>\alpha$. Since $G \in \CSV_0(G_1,G_2)$, from~\eqref{eq:time_changed_right_inverse} and Lemma~\ref{lem:bounds_h_2_multidim}, it follows that
\begin{align*}
    &\int_{\Sp^{d-1}}\int_0^\infty
|\rho_{\bm{X}_t^\co}^{\la}(x,\bm{v})-\rho_{\bm{Z}}^{\la}(x)|^r \e^{-\1_{(0,1)}(x)\theta /x} \D x \, \sigma(\D \bm{v})\\
&\quad = \int_{\Sp^{d-1}}\int_0^\infty
\bigg|\frac{G(t/x)}{G(t)}\wt h(\bm{v} x/t)-\left(\frac{c_\alpha}{\alpha} \right)^{1/\alpha} \bigg|^r \e^{-\1_{(0,1)}(x)\theta /x} \frac{\D x}{x^{r/\alpha}} \, \sigma(\D \bm{v})\\
&\quad \lesssim \int_{\Sp^{d-1}}\int_0^\infty \bigg(
%\left(K_{\wt h}+\left(\frac{c_\alpha}{\alpha}\right)^{1/\alpha}\right)^2
\bigg|\frac{G(t/x)}{G(t)}-1\bigg|^r+\bigg|\wt h(\bm{v} x/t)-\left(\frac{c_\alpha}{\alpha} \right)^{1/\alpha} \bigg|^r \bigg)\e^{-\1_{(0,1)}(x)\theta /x} \frac{\D x}{x^{r/\alpha}} \, \sigma(\D \bm{v})\\
&\quad \lesssim G_2(t)^r \int_0^\infty 
G_1(1/x)^r\e^{-\1_{(0,1)}(x)\theta /x}\frac{\D x}{x^{r/\alpha}} \\
&\qquad + \int_{\Sp^{d-1}}\int_0^\infty \big[1\wedge ((x/t)^{- p/\alpha}G(t/x)^{p}+(x/t)^{-\delta})^r\big] \e^{-\1_{(0,1)}(x)\theta /x} \frac{\D x}{x^{r/\alpha}} \, \sigma(\D \bm{v})\eqqcolon I_1(t)+I_2(t).
\end{align*}

Since $G_1 \in \SV_0\cap\SV_\infty$, Karamata's theorem~\cite[Thm~1.5.11]{MR1015093} gives
\begin{equation*}
I_1(t) = G_2(t)^r\bigg(\int_1^\infty G_1(1/x)^r\frac{\D x}{x^{r/\alpha}}
+\int_0^1 G_1(1/x)^r \e^{-\theta/x} \frac{\D x}{x^{r/\alpha}}\bigg) 
\lesssim G_2(t)^r, \quad \text{ for }t \da 0.
\end{equation*}
On the other hand, since $G \in \CSV_0(G_1,G_2)$, Remark~\ref{rem:CSV} yields, as $t\da 0$,
\begin{align*}
I_2(t) &\le \int_0^1 \big((x/t)^{- p/\alpha}G(t/x)^{p}+(x/t)^{-\delta}\big)^r \e^{-\theta /x} \frac{\D x}{x^{r/\alpha}}  
+ \int_1^\infty \big((x/t)^{-p/\alpha}G(t/x)^{p}+(x/t)^{-\delta}\big)^r \frac{\D x}{x^{r/\alpha}} \\
&\lesssim t^{r p/\alpha}G(t)^{r p}\int_0^1 x^{-r p/\alpha-r/\alpha}(1+G_1(1/x))^{r p}\e^{-\theta x^{-1/\alpha}}\D x+ t^{r\delta}\int_0^1 x^{-r\delta-r/\alpha} \e^{-\theta /x} \D x \\
&\quad + t^{rp/\alpha}G(t)^{rp} \int_1^\infty x^{- rp/\alpha-r/\alpha}(1+G_1(1/x))^{rp} %\left(\frac{G(t/x)}{G(t)}\right)^p
\D x + t^{r\delta} \int_1^\infty x^{-r\delta-r/\alpha} \D x
\lesssim %\int_0^1 1\wedge ((x/t)^{- p/\alpha}G(t/x)^{p}+(x/t)^{-\delta})^2 \e^{-\theta x^{-1/\alpha}} \frac{\D x}{x^{2/\alpha}} 
 t^{rp/\alpha}G(t)^{rp} %\int_1^\infty x^{- 2p/\alpha-2/\alpha}(1+G_1(1/x))^p %\left(\frac{G(t/x)}{G(t)}\right)^p \D x 
+ t^{r\delta}. %\int_1^\infty x^{-2\delta-2/\alpha} \D 
\end{align*}
Combining the bounds in the previous three displays, we obtain first claim.
%\begin{equation*}
%\int_{\Sp^{d-1}}\int_0^\infty
%|\rho_{\bm{X}_t^\co}^{\la}(x,\bm{v})-\rho_{\bm{Z}}^{\la}(x)|^r \e^{-\1_{(0,1)}(x)\theta/x}\D x\,\sigma(\D \bm{v}) \lesssim G_2(t)^r+t^{r p/\alpha}G(t)^{r p}+ t^{r \delta}, \quad \text{ as }t \da 0.
%\end{equation*}
%Hence, for $\alpha \in (1,2)$ we choose $r=2$ and if $\alpha\in (0,1)$ we choose $r=1$, which concludes the first part of the proof.

Suppose $\alpha\in(1,2)$. Then $\E[\bm{Z}(1)]=\bm{0}$, so that $\bm{a}_{\bm{X}_t^\co}^\theta\coloneqq\E^\theta[\bm{X}_t^\co(1)]$ and $\bm{a}_{\bm{Z}}^\theta\coloneqq\E^\theta[\bm{Z}(1)]$ are given by:
\begin{align*}
%\label{eq:drift_components_mu_t}
\bm{a}_{\bm{Z}}^\theta 
&= \bm{\gamma}_{\bm{Z}}+\int_{D^\co }\bm{w}\nu_{\bm{Z}}^\theta(\D\bm{w})
=\int_{D^\co }\bm{w}\big(\nu_{\bm{Z}}^\theta(\D\bm{w}) - \nu_{\bm{Z}}(\D\bm{w})\big)
=\int_{\Sp^{d-1}}\bm{v}\int_{0}^1
(\e^{-\theta/x}-1)
\rho_{\bm{Z}}^\la(x)
\D x\,\sigma(\D\bm{v}),\\
\bm{a}_{\bm{X}_t^\co}^\theta
&= \frac{t}{g(t)}\E[\bm{X}^\co(1)]
+\E^\theta[\bm{X}_t^\co(1)]-\E[\bm{X}_t^\co(1)]\\
 &= \frac{t}{g(t)}\bigg(\bm{\gamma}_{\bm{X}}
-\int_{D}\bm{w}\,\nu_{\bm{X}^\D}(\D\bm{w})
+ \int_{D^\co }\bm{w}\,\nu_{\bm{X}^\co}(\D\bm{w}) \bigg)
+\int_{\Sp^{d-1}} \int_0^1  %\1_{(D/g(t))\setminus D}(x\bm{v}) 
 \bm{v} \big(\e^{-\theta/x}-1\big)\rho^{\la}_{\bm{X}_t^\co}(x,\bm{v}) %\rho_{\bm{X}_t^\co}(
 \D x\,%,\bm{v})
 \sigma(\D \bm{v}).
\end{align*} 

Thus, applying Lemma~\ref{lem:bounds_h_2_multidim} and that $G \in \CSV_0(G_1,G_2)$, yields, as $t\da 0$,
\begin{align*}%\label{eq:defn_kappa_2_cm}
|\bm{a}_{\bm{X}_t^\co}^\theta-\bm{a}_{\bm{Z}}^\theta| &\lesssim \frac{t}{g(t)} + \int _0^1 \big|\e^{-\theta/x}-1\big|\cdot |\rho^{\la}_{\bm{X}_t^\co}(x,\bm{v})-\rho^{\la}_{\bm{Z}}(x)| \D x\\
&\lesssim \frac{t}{g(t)} + \int _0^1 \bigg(\bigg|\frac{G(t/x)}{G(t)}-1\bigg|\wt h(\bm{v} x/t)
+\bigg|\,\wt h(\bm{v} x/t)-\left(\frac{c_\alpha}{\alpha}\right)^{1/\alpha}\bigg| \bigg)
\frac{\D x}{x^{1/\alpha}}\\
&\lesssim \frac{t}{g(t)} 
+ \int _0^1 \big[G_1(1/x)G_2(t)
+K_{\wt h}\big(1\wedge ((x/t)^{-p/\alpha}G(t/x)^p+(x/t)^{-\delta})\big) \big]\frac{\D x}{x^{1/\alpha}} \\
&\lesssim \frac{t}{g(t)} + G_2(t) \int _0^1 G_1(1/x) \frac{\D x}{x^{1/\alpha}}
+ \int_0^1 1\wedge \big((x/t)^{-p/\alpha}G(t/x)^p+(x/t)^{-\delta}\big) \frac{\D x}{x^{1/\alpha}}.
\end{align*} 
Note that $\int _0^1 G_1(1/x) x^{-1/\alpha}\D x<\infty$ by Karamata's theorem~\cite[Thm~1.5.11]{MR1015093} since $G_1 \in \SV_\infty$. Hence, it remains to control the final integral in the display above. Again, by Karamata's theorem, we deduce 
\begin{align*}
&\int_0^1 1\wedge \big((x/t)^{-p/\alpha}G(t/x)^p+(x/t)^{-\delta}\big) \frac{\D x}{x^{1/\alpha}}\\ 
&\quad\lesssim t^{p/\alpha} G(t)^p \int_t^1(1+G_1(1/x))^p \frac{\D x}{x^{(p+1)/\alpha}} + t^{\delta}\int_t^1 \frac{\D x}{x^{\delta+1/\alpha}}+ \int_0^t \frac{\D x}{x^{1/\alpha}} \\
& \quad\lesssim t^{p/\alpha}G(t)^p(\1_{\{p< \alpha-1\}}+(1+G_1(1/t))^p t^{1-(p+1)/\alpha}\1_{\{p>\alpha-1\}}+\ov{G}_1(1/t)\1_{\{p=\alpha-1\}}) +t^{\delta}+t^{1-1/\alpha}\\
&\quad\lesssim t^{p/\alpha}G(t)^p(1+(1+G_1(1/t))^p t^{1-(p+1)/\alpha}+\ov{G}_1(1/t)\1_{\{p=\alpha-1\}}) +t^{\delta}+t^{1-1/\alpha}, 
\quad \text{as } t \da 0.
\end{align*}
Altogether, it follows that, as $t\da 0$,
\begin{equation*}
|\bm{a}_{\bm{X}_t^\co}^\theta-\bm{a}_{\bm{Z}}^\theta| 
\lesssim t/g(t)+G_2(t)
+G(t)^p\big(t^{p/\alpha}+t^{1-1/\alpha}[1+G_1(1/t)^p +\ov{G}_1(1/t)\1_{\{p=\alpha-1\}}]\big) +t^{\delta}.
\end{equation*}

In the finite variation case $\alpha \in (0,1)$, using that $\bm{X}^\co$ (and hence $\bm{X}^\co_t$) and $\bm{Z}$ have zero natural drift, the boundedness of $x\mapsto x^{p+1}\e^{-\theta x}$ by $(\e\theta/(p+1))^{-p-1}$ on $[1,\infty)$, and~\eqref{eq:bound_como_r_dist}, we obtain, as $t \da 0$, 
\begin{equation*}
\big|\bm{a}_{\bm{X}_t^\co}^\theta
-\bm{a}_{\bm{Z}}^\theta\big|
\le \int_{\Sp^{d-1}}\int_0^\infty
|\rho_{\bm{X}_t^\co}^{\la}(x,\bm{v})-\rho_{\bm{Z}}^{\la}(x)| \e^{-\1_{(0,1)}(x)\theta /x} \D x \, \sigma(\D \bm{v})
\lesssim G_2(t)+t^{p/\alpha}G(t)^p+t^\delta.
\end{equation*}
The special case where $\int_{\Sp^{d-1}}\bm{v}\,\sigma(\D\bm{v})=\int_{\Sp^{d-1}} \rho^{\la}_{\bm{X}_t^\co}(x,\bm{v})\bm{v}\,\sigma(\D\bm{v})=\bm{0}$ for $x>0$ again follows directly.
\end{proof}

\subsection{Proofs of the Main Results}
\label{subsec:proofs_main_results}

\begin{proof}[Proof of Theorem~\ref{thm:roc_tempered_sde}] 
The idea is to use Theorems~\ref{thm:gen_bound_thin} and~\ref{thm:gen_bound_como} in conjunction with the estimates from Propositions~\ref{prop:integrals_DoA_thin} and~\ref{prop:integrals_DoA_como}. Throughout this proof, $T,\theta>0$ are fixed and we use the notation introduced in the previous subsection. %and the letter $\mfK$ will denote a positive constant that may change from instance to instance and that may depend on any model parameter such as $T$, $\theta$ and $K$, but \emph{not} on $t$. 
By construction, our processes have finite moments of any order, so Theorems~\ref{thm:gen_bound_thin} and~\ref{thm:gen_bound_como} will give, for $r\coloneqq\lceil\alpha\rceil$,
\[
\E_\theta\big[\|\X_t-\ZZ\|_{[0,T]}^r\big] 
\le 
\begin{cases}
\kappa_r^\mft(T)\e^{\eta_r(T)}
&\text{under (\nameref{asm:T})}\\
\kappa_r^\mfc(T)\e^{\eta_r(T)}
&\text{under (\nameref{asm:C}).}
\end{cases}
\]
In all cases, our assumptions,~\eqref{eq:kappa_eta2},~\eqref{eq:kappa_eta1} and Propositions~\ref{prop:integrals_DoA_thin} and~\ref{prop:integrals_DoA_como} give $\eta_r(T)\lesssim 1$ as $t\da 0$, so it suffices to analyse the functions $\kappa_r^\mft$ and $\kappa_r^\mfc$. Furthermore, we recall from Potter's bound~\cite[Thm~1.5.6]{MR1015093} that any slowly varying function dominates any positive power function of $t$ as $t\da 0$.

\underline{\textbf{DoA under~(\nameref{asm:T}) with $\alpha \in (1,2)$}}. The definition of $\kappa_2^\mft$ in~\eqref{eq:kappa_eta2} and Proposition~\ref{prop:integrals_DoA_thin} gives:
\[
\kappa_2^\mft(T)
\lesssim |\bm{a}_{\bm{X}_t^\co}^\theta-\bm{a}_{\bm{Z}}^\theta|^2 
+\!\int_{\R^d_{\bm0}}\!|\bm{w}|^2\bigg|\frac{\D\nu_{\bm{Z}}}{\D\nu}(\bm{w})
    -\frac{\D\nu_{\bm{X}_t}}{\D\nu}(\bm{w})\bigg| \nu^\theta(\D\bm{w}),\\ 
\quad\text{as }t\da 0.
\]

{\underline{\textit{DoNA}}}. Proposition~\ref{prop:integrals_DoA_thin}, with $H\equiv 1$, $G\equiv 1\equiv H_1$, $H_2\equiv 0$ and $g(t)=t^{1/\alpha}$, gives, as $t\da 0$,
\begin{equation*}
\big|\bm{a}_{\bm{X}^\co_t}^\theta-\bm{a}_{\bm{Z}}^\theta\big|
\lesssim t^{p/\alpha} 
+ t^{1-1/\alpha}\big(1
    +|\log t|\1_{\{p=\alpha-1\}}\big)
\quad\text{and}\quad
\int_{\R^d_{\bm 0}} |\bm{w}|^2\bigg|\frac{\D\nu_{\bm{Z}}}{\D\nu}(\bm{w})
    -\frac{\D\nu_{\bm{X}_t}}{\D\nu}(\bm{w})\bigg| \nu^\theta(\D\bm{w})
\lesssim t^{p/\alpha}.
\end{equation*} 
This concludes the general case $\alpha \in (1,2)$ in DoNA. If~\eqref{eq:symmetry_thin} holds, then Proposition~\ref{prop:integrals_DoA_thin} gives $\bm{a}_{\bm{X}^\co_t}^\theta=\bm{a}_{\bm{Z}}^\theta=\bm{0}$, so that $\E_\theta\big[\|\X_t-\ZZ\|_{[0,T]}^2\big] = \Oh(t^{p/\alpha})$ as $t \da 0$ in this case. 

{\underline{\textit{DoNNA}}}. In this case, 
\[
\big|\bm{a}_{\bm{X}^\co_t}^\theta-\bm{a}_{\bm{Z}}^\theta\big|
+\int_{\R^d_{\bm 0}} |\bm{w}|^2\bigg|\frac{\D\nu_{\bm{Z}}}{\D\nu}(\bm{w})
-\frac{\D\nu_{\bm{X}_t^\co}}{\D\nu}(\bm{w})\bigg| \nu^\theta(\D\bm{w})
\lesssim H_2(g(t)),
\quad\text{as }t\da 0,
\]
so $\E_\theta\big[\|\X_t-\ZZ\|_{[0,T]}^2\big] =\Oh(H_2(t^{1/\alpha}G(t)))$, as $t \da 0$.

\underline{\textbf{DoA under~(\nameref{asm:T}) with $\alpha \in (0,1)$}}. The definition of $\kappa_1^\mft$ in~\eqref{eq:kappa_eta1} and Proposition~\ref{prop:integrals_DoA_thin} give
\[
\kappa_1^\mft(T)
\lesssim \int_{\R^d_{\bm0}}|\bm{w}|\cdot
\bigg|\frac{\D\nu_{\bm{Z}}}{\D\nu}(\bm{w})
-\frac{\D\nu_{\bm{X}_t}}{\D\nu}(\bm{w})\bigg| \nu^\theta(\D\bm{w}),
\quad\text{as }t\da 0.
\]
Then Proposition~\ref{prop:integrals_DoA_thin} gives, as $t\da 0$, $\kappa_1^\mft(T)\lesssim t^{p/\alpha}$ in the {\underline{\textit{DoNA}}}, and $\kappa_1^\mft(T)\lesssim H_2(g(t))$ in the {\underline{\textit{DoNNA}}}.

\underline{\textbf{DoA under~(\nameref{asm:C}) with $\alpha\in(1,2)$}}. The definition of $\kappa_2^\mfc$ in~\eqref{eq:defn_kappa_2_cm} and Proposition~\ref{prop:integrals_DoA_como}, gives
\[
\kappa^\mfc_2(T) \lesssim  
\big|\bm{a}_{\bm{X}_t^\co}^\theta-\bm{a}_{\bm{Z}}^\theta\big|^2 
+\int_{\Sp^{d-1}}\int_0^\infty
(\rho_{\bm{X}_t^\co}^{\la}(x,\bm{v})-\rho_{\bm{Z}}^{\la}(x,\bm{v}))^2 e^{-\1_{(0,1)}(x) \theta/x } \D x \, \sigma(\D \bm{v}),
\enskip t\da 0.
\]

{\underline{\textit{DoNA}}}. By Proposition~\ref{prop:integrals_DoA_como} and Remark~\ref{rem:DoNA_como_delta_large}, with $H\equiv 1$, $G\equiv 1\equiv G_1$, $G_2\equiv 0$ and $g(t)=t^{1/\alpha}$, 
\begin{gather*}
\big|\bm{a}_{\bm{X}^\co_t}^\theta-\bm{a}_{\bm{Z}}^\theta\big|
\lesssim t^{p/\alpha}+t^{1-1/\alpha}\big(1 +|\log t|\1_{\{p=\alpha-1\}}\big)
\quad\text{and}\\
\int_{\Sp^{d-1}}\int_0^\infty
(\rho_{\bm{X}_t^\co}^{\la}(x,\bm{v})-\rho_{\bm{Z}}^{\la}(x,\bm{v}))^2 e^{-\1_{(0,1)}(x) \theta/x } \D x \, \sigma(\D \bm{v})
\lesssim t^{2p/\alpha},
\quad\text{as }t\da 0.
\end{gather*} 
Hence, $\kappa_2^\mfc(T) \lesssim t^{2p/\alpha}+%t^{2\delta}+
t^{2(1-1/\alpha)}(1+|\log t|^2\1_{\{p=\alpha-1\}})$, so the result follows in this case. The case where~\eqref{eq:symmetry_como} also holds again follows from Proposition~\ref{prop:integrals_DoA_como} and Remark~\ref{rem:DoNA_como_delta_large} directly.

{\underline{\textit{DoNNA}}}. In this case, Proposition~\ref{prop:integrals_DoA_como} directly gives
\[
\big|\bm{a}_{\bm{X}^\co_t}^\theta-\bm{a}_{\bm{Z}}^\theta\big|^2
+\int_{\Sp^{d-1}}\int_0^\infty
(\rho_{\bm{X}_t^\co}^{\la}(x,\bm{v})-\rho_{\bm{Z}}^{\la}(x,\bm{v}))^2 e^{-\1_{(0,1)}(x) \theta/x } \D x \, \sigma(\D \bm{v})
\lesssim G_2(t)^2,
\quad\text{as }t\da 0.
\]
Hence, $\E_\theta\big[\|\X_t-\ZZ\|_{[0,T]}^2\big] =\Oh(G_2(t)^2)$, as $t \da 0$.

\underline{\textbf{DoA under~(\nameref{asm:C}) with $\alpha\in(0,1)$}}. The definition of $\kappa_1^\mfc$ in~\eqref{eq:defn_kappa_1_cm} and Proposition~\ref{prop:integrals_DoA_como} give
\[
\kappa_1^\mfc(T)
\lesssim \int_{\Sp^{d-1}} \int_0^\infty|\rho_{\bm{X}_t^\co}^{\la}(x,\bm{v})-\rho_{\bm{Z}}^{\la}(x,\bm{v})|e^{-\1_{(0,1)}(x) \theta/x }  \D x \,\sigma(\D \bm{v}),
\quad\text{as }t\da 0.
\] 
Thus, Proposition~\ref{prop:integrals_DoA_como} gives, as $t\da 0$, $\kappa_1^\mfc(T)\lesssim t^{p/\alpha}$ in the \underline{\textit{DoNA}} and $\kappa_1^\mfc(T)\lesssim G_2(t)$ in the \underline{\textit{DoNNA}}.
\end{proof}

Before presenting the proofs of Theorems~\ref{thm:main_res_T} \&~\ref{thm:main_res_C}, we introduce an auxiliary lemma.

\begin{lemma}
\label{lem:Lp-Girsanov-to-cip}
Let $\{\xi(t)\}_{t\in T}$ be a family of non-negative random variables and $M$ be an a.s. positive random variable. If $\sup_{t\in(0,1]}\E[\xi(t)^rM]<\infty$ for some $r>0$, then $\{\xi(t)\}_{t\in T}$ is tight.
\end{lemma}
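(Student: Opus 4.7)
The plan is to split the event $\{\xi(t)>R\}$ according to whether the positive random variable $M$ is small or not, and then use Markov's inequality on the set where $M$ is bounded below. Concretely, for any $R>0$ and $\delta>0$, decompose
\[
\p(\xi(t)>R)
\le \p\big(\xi(t)>R,\,M\ge \delta\big) + \p(M<\delta)
\le \p\big(\xi(t)^rM\ge R^r\delta\big) + \p(M<\delta),
\]
and apply Markov's inequality to the first term to obtain
\[
\p(\xi(t)>R)
\le \frac{\E[\xi(t)^rM]}{R^r\delta} + \p(M<\delta)
\le \frac{C}{R^r\delta} + \p(M<\delta),
\]
where $C\coloneqq \sup_{t\in(0,1]}\E[\xi(t)^rM]<\infty$ by assumption.

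To conclude tightness, the plan is a standard $\varepsilon/2$ argument exploiting $M>0$ a.s. Fix $\varepsilon>0$. Since $M>0$ almost surely, continuity of probability gives $\p(M<\delta)\da 0$ as $\delta\da 0$, so pick $\delta_\varepsilon>0$ with $\p(M<\delta_\varepsilon)<\varepsilon/2$. Then choose $R_\varepsilon$ large enough so that $C/(R_\varepsilon^r\delta_\varepsilon)<\varepsilon/2$. For any $R\ge R_\varepsilon$ the bound above yields $\sup_{t}\p(\xi(t)>R)<\varepsilon$, which establishes $\lim_{R\to\infty}\sup_{t}\p(\xi(t)>R)=0$, i.e.\ tightness in the sense of Subsection~\ref{subsec:notation}.

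There is essentially no obstacle here: the only nontrivial input is that $M>0$ a.s.\ forces $\p(M<\delta)\to 0$, which is immediate from monotone/dominated convergence applied to $\1_{\{M<\delta\}}$. The crucial feature of the argument is that the choice of $\delta_\varepsilon$ depends only on the law of $M$ (not on $t$), so the resulting tail bound is uniform in $t\in(0,1]$, which is precisely what tightness requires.
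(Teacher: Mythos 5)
Your proof is correct and uses essentially the same argument as the paper: split on the event $\{M\ge\delta\}$ versus $\{M<\delta\}$, apply Markov's inequality to the first piece, and then use $M>0$ a.s.\ to make the second piece small by shrinking $\delta$. The paper phrases the conclusion via a $\limsup$ over $u\to\infty$ followed by letting $\ve\da 0$, whereas you give the equivalent $\varepsilon/2$ bookkeeping; this is a cosmetic difference only.
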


\begin{proof}
We aim to show that $\lim_{u\to\infty}\sup_{t \in (0,1]}\p(|\xi(t)|>u)=0$. Note that, for all $\ve>0$,
\[
\p(|\xi(t)|>u)
\le\p(|\xi(t)|>u, M>\ve) + \p(M\le \ve)
\le \p(|\xi(t)|^rM>u^r\ve) + \p(M\le \ve).
\]
Hence, by assumption and Markov's inequality, it follows that
\[
\limsup_{u\to\infty}\sup_{t\in T}\p(|\xi(t)|>u)
\le
 \limsup_{u\to\infty}\frac{\sup_{t\in T}\E[|\xi(t)|^rM]}{u^r\ve}
+\p(M\le \ve)
=\p(M\le \ve).
\]
Since $M>0$ a.s. and $\ve>0$ is arbitrary, taking $\ve\da 0$ gives $\p(M\le \ve) \to 0$ and concludes the proof.
\end{proof}

\begin{proof}[Proof of Theorem~\ref{thm:main_res_T}]
First, recall from the construction of $\nu^\theta$, and hence $\E_\theta$, that
\begin{equation}
\label{eq:change-of-measure-thin}
\E_\theta\big[\|\X_t-\ZZ\|_{[0,T]}^p\big] 
= \E\big[\|\X_t-\ZZ\|_{[0,T]}^p M_\theta \big].
\end{equation} 
Here, $M_\theta$ is given as in~\eqref{eq:P_theta^t}, i.e.
\begin{equation}\label{eq:M_theta_thinning}   
M_\theta 
\coloneqq \frac{\D\p_\theta}{\D\p}
=\exp\bigg(\vartheta T-\theta\int_{(0,T]\times D^\co}|\bm{w}|\Xi^\co(\D s,\D\bm{w})\bigg), 
\text{ where }
\vartheta
= \int_{D^\co}\big(1-\e^{-\theta|\bm{w}|}\big)\nu(\D\bm{w})>0,
\end{equation} 
with $\Xi^\co$ and $\nu$ defined in the context of the thinning coupling from Section~\ref{sec:thinning_technic_bound} under Assumption~(\nameref{asm:T}). Note that $M_\theta>0$ a.s. and crucially does not depend on $t$. The proof now follows from~\eqref{eq:change-of-measure-thin} and Lemma~\ref{lem:Lp-Girsanov-to-cip} with $M\coloneqq M_\theta$. Indeed, it suffices to specify $r$ and the family $\{\xi(t)\}_{t\in(0,1]}$ satisfying the assumptions of Lemma~\ref{lem:Lp-Girsanov-to-cip}.

Suppose $\bm{X}$ is in the domain of normal attraction of $\bm{Z}$. Then, by Theorem~\ref{thm:roc_tempered_sde}, the family $\xi(t)\coloneqq t^{-q}\|\X_t-\ZZ\|_{[0,T]}$ satisfies the assumptions of Lemma~\ref{lem:Lp-Girsanov-to-cip} with $r\coloneqq\lceil\alpha\rceil$ and 
\[
q
\coloneqq 
\begin{cases}
\frac{p}{2\alpha}\wedge (1-\frac{1}{\alpha}),&\alpha\in(1,2)\text{ and \eqref{eq:symmetry_thin} fails},\\
\frac{p}{2\alpha},&\alpha\in(1,2)\text{ and \eqref{eq:symmetry_thin} holds},\\
\frac{p}{\alpha},&\alpha\in(0,1).
\end{cases}
\]
Suppose $\bm{X}$ is in the domain of non-normal attraction of $\bm{Z}$. Then, by Theorem~\ref{thm:roc_tempered_sde}, the family $\xi(t) \coloneqq H_2(g(t))^{-1/p}\|\X_t-\ZZ\|_{[0,T]}$ satisfies the assumptions of Lemma~\ref{lem:Lp-Girsanov-to-cip} with $r=\lceil\alpha\rceil$.
\end{proof}

\begin{proof}[Proof of Theorem~\ref{thm:main_res_C}]
We proceed as in the proof of Theorem~\ref{thm:main_res_T}. By construction of $\E_\theta$, we have
$\E_\theta[\|\X_t-\ZZ\|_{[0,T]}^p] = \E[\|\X_t-\ZZ\|_{[0,T]}^p M_\theta]$, where $M_\theta$ is as in~\eqref{eq:P_theta^t_cm}: 
\begin{equation}\label{eq:M_theta_como}
M_\theta \coloneqq \frac{\D\p_\theta}{\D\p}=\exp\bigg(\vartheta T-\theta\int_{[0,T]\times (0,1)\times \Sp^{d-1}} x^{-1} \Xi^\co(\D t, \D x,\D\bm{v})\bigg), 
\quad
\vartheta = \frac{1}{\alpha}\int_1^\infty \big(1-\e^{-\theta x} \big)  \frac{\D x}{x^2},
\end{equation} 
with $\Xi^\co$ defined in the context of the comonotonic coupling from Section~\ref{sec:comono_technic_bound} under Assumption~(\nameref{asm:C}). Again, the claims will follow from Lemma~\ref{lem:Lp-Girsanov-to-cip} with $M\coloneqq M_\theta$.

Suppose $\bm{X}$ is in the domain of normal attraction of $\bm{Z}$. Then, by Theorem~\ref{thm:roc_tempered_sde}, the family 
\[
\xi(t)\coloneqq f(t)^{-1} 
\|\X_t-\ZZ\|_{[0,T]}
\]
satisfies the assumptions of Lemma~\ref{lem:Lp-Girsanov-to-cip} with $r\coloneqq\lceil\alpha\rceil$ and 
\[
f(t)
\coloneqq 
\begin{cases}
t^{%\delta \wedge
(p/\alpha)\wedge(1-1/\alpha)}(1+|\log t|\1_{\{p=\alpha-1\}%\cap\{\delta \ge 1-1/\alpha\}
}),
&\alpha\in(1,2)\text{ and \eqref{eq:symmetry_como} fails},\\
t^{%\delta \wedge(
p/\alpha },
&\alpha\in(1,2)\text{ and \eqref{eq:symmetry_como} holds or }\alpha\in(0,1).
\end{cases}
\]
Suppose $\bm{X}$ is in the domain of non-normal attraction of $\bm{Z}$. Then, by Theorem~\ref{thm:roc_tempered_sde}, the family $\xi(t)\coloneqq G_2(t)^{-1}\|\X_t-\ZZ\|_{[0,T]}$ satisfies the assumptions of Lemma~\ref{lem:Lp-Girsanov-to-cip} with $r=\lceil\alpha\rceil$.
\end{proof}

\section{Future objectives and conjectures}
\label{sec:conclusion}

Let us comment on possible extensions and generalisations left for future work.

\begin{itemize}
\item In the present work, we excluded the case $\alpha=2$, wherein processes are attracted to a Brownian motion. Although results for such processes could be derived via Theorems~\ref{thm:gen_bound_thin} and~\ref{thm:gen_bound_como}, our goal was to highlight the thinning and comonotonic couplings, which are not used in this limiting case. Additionally, to obtain the best guarantees, it would be best to first find a sharp coupling (say, with matching rates of convergence in $L^1$ or $L^2$-Wasserstein distance as in~\cite{WassersteinPaper}) between pure-jump L\'evy processes and Brownian motion with converging distance in the small-time regime, which we currently do not have (see, e.g.,~\cite[Sec.~2.4]{WassersteinPaper}). 
\item In the present work, we excluded the case $\alpha=1$ because many cases would arise depending on the attracted process' characteristics, which would considerably increase the length of the present work. However, the same couplings and techniques used here could be employed to analyse this case.
\item As discussed in Subsection~\ref{subsec:literature}, it would be desirable to obtain bounds on the Wasserstein distance between SDE solutions under a bounded metric that yield sharp convergence rates in the small-time DoA. However, such extensions likely require BDGN-type inequalities tailored to bounded metrics.
\end{itemize}

%\bibliographystyle{abbrv}
%\bibliography{Referencer}
\printbibliography

\section*{Acknowledgements}
\thanks{
\noindent DKB is supported by AUFF NOVA grant AUFF-E-2022-9-39. JGC is supported by PAPIIT grant 36-IA104425 and EPSRC grant EP/V009478/1. The authors would like to thank the Isaac Newton Institute for Mathematical Sciences, Cambridge, for support during the INI satellite programme \textit{Heavy tails in machine learning}, hosted by The Alan Turing Institute, London, and the INI programme \textit{Stochastic systems for anomalous diffusion} hosted at INI in Cambridge, where work on this paper was undertaken. This work was supported by EPSRC grant EP/R014604/1. This work was also supported by a short visit sponsored by CIC grant COIC/STIA/10133/2025.}

\appendix
\section{Stable Processes and their Domain of Attraction}

In this section, we will recall some general theory from~\cite{WassersteinPaper}. We start by recalling the formal definition of the $\alpha$-stable attractor $\bm{Z}$. 
\begin{defin}[{\cite[Defn~4.1]{WassersteinPaper}}]\label{def:alpha_stable}
For  any $\alpha\in(0,2]$, the law of 
an $\alpha$-stable 
 L\'evy process $\bm{Z}$
 is given by a generating triplet $(\bm{\gamma_Z},\bm{\Sigma_Z}\bm{\Sigma_Z}^\tra,\nu_{\bm{Z}})$
(for the cutoff function $\bm{w}\mapsto\1_{B_{\bm{0}}(1)}(\bm{w})$)
as follows:  the L\'evy measure equals
\begin{equation}
    \label{eq:Levy-measure-stable}
    \nu_{\bm{Z}}(A)
    \coloneqq c_\alpha\int_0^\infty\int_{\Sp^{d-1}}
        \1_{A}(r\bm{v})\sigma(\D\bm{v})r^{-\alpha-1}\D r,
    \quad A\in\mathcal{B}(\R^d_{\bm{0}}),
\end{equation}
where $\sigma$ is a probability measure  on $\mathcal{B}(\Sp^{d-1})$  
and  $c_\alpha\in[0,\infty)$ an ``intensity'' parameter, satisfying
\begin{itemize}[leftmargin=1em, nosep]
    \item $\alpha=2$ [Brownian motion with zero drift]: $\bm{\Sigma_Z}\ne \bm{0}$, $\bm{\gamma_Z}=\bm{0}$ and $c_\alpha=0$ (i.e. $\nu_{\bm{Z}}\equiv 0$);
    \item $\alpha\in(1,2)$ [infinite variation,  zero-mean process]: $c_\alpha>0$, $\bm{\gamma_Z}=-\int_{\R^d_{\bm0}\setminus D}\bm{x}\nu_{\bm{Z}}(\D\bm{x})$ and $\bm{\Sigma_Z}=\bm{0}$;
    \item $\alpha=1$ [Cauchy process]: either $c_\alpha>0$, with symmetric angular component $\int_{\Sp^{d-1}}\bm{v}\sigma(\D\bm{v})=\bm{0}$, or $c_\alpha=0$ and the process $\bm{Z}$ is  a deterministic nonzero linear drift, i.e. $\bm{Z}_t=\bm{\gamma_Z}t$ for all times $t$;
    \item $\alpha\in(0,1)$ [finite variation and zero natural drift]:  $c_\alpha>0$ and  
    $\bm{\gamma_Z}=\int_{D}\bm{x}\nu_{\bm{Z}}(\D\bm{x})$.
\end{itemize}
\end{defin}

From Definition~\ref{def:alpha_stable}, an $\alpha$-stable process $\bm{Z}$ satisfies $(\bm{Z}(st))_{s\in[0,1]}\eqd (t^{1/\alpha}\bm{Z}(s))_{s\in[0,1]}$ for $t>0$, and, for $\alpha\in[1,2)$ (resp. $\alpha\in(0,1)$), a non-deterministic $\alpha$-stable process $\bm{Z}$ is of infinite (resp. finite) variation by~\cite[Thm~21.9]{MR3185174}, since~\eqref{eq:Levy-measure-stable} implies  $\int_{B_{\bm{0}}(1)\setminus\{\bm0\}}|\bm{x}|\nu_{\bm{Z}}(\D\bm{x})=\infty$ (resp. $\int_{B_{\bm{0}}(1)\setminus\{\bm0\}}|\bm{x}|\nu_{\bm{Z}}(\D\bm{x})<\infty$). %Note also that in the case of the Cauchy process (stability index $\alpha=1$), $\gamma_{\bm{Z}}$ can be arbitrary if $c_\alpha>0$ and satisfies $\gamma_{\bm{Z}}\in\R^d_{\bm{0}}$ if $c_\alpha=0$.
For any $\bm{a}\in \Sp^{d-1}$, define $\scrL_{\bm{a}}(r)\coloneqq\{\bm{x}\in\R^d:\langle\bm{a},\bm{x}\rangle\ge r\}$ for any $r>0$. The following known result characterises the L\'evy processes in the DoA of an $\alpha$-stable process. The result is a consequence of~\cite[Thm~15.14]{MR1876169} and~\cite[Thm~2]{MR3784492}, see~\cite[App.~B]{WassersteinPaper} for the proof.

\begin{theorem}[{Small-time DoA~\cite[Thm~4.2]{WassersteinPaper}}]\label{thm:small_time_domain_stable}
Let $\bm{X}=(\bm{X}(t))_{t \in [0,1]}$ and $\bm{Z}=(\bm{Z}(t))_{t\in [0,1]}$ be L\'evy processes in $\R^d$. Then $(\bm{X}(st)/g(t))_{s \in [0,1]}\cid (\bm{Z}(s))_{s\in [0,1]}$ as $t \da 0$ in the Skorokhod space for some positive normalising function $g:(0,1]\to(0,\infty)$ if and only if $\bm{Z}$ is $\alpha$-stable for some $\alpha\in(0,2]$, the normalising function admits the representation $g(t)=t^{1/\alpha}G(t)$, where $G$ is a slowly varying function at zero, and the generating triplets $(\bm\gamma_{\bm{X}},\bm{\Sigma_X}\bm{\Sigma}_{\bm{X}}^\tra,\nu_{\bm{X}})$ and $(\bm\gamma_{\bm{Z}},\bm{\Sigma_Z}\bm{\Sigma}_{\bm{Z}}^\tra,\nu_{\bm{Z}})$ (for the cutoff function $\bm{w} \mapsto \1_{D}(\bm{w})$) of $\bm{X}$ and $\bm{Z}$, respectively, are related as follows:
\begin{itemize}[leftmargin=1em, nosep]
\item if $\alpha=2$ (attraction to Brownian motion), then 
\begin{equation}
\label{eq:Brownian-limit}
G(t)^{-2} \bigg(\bm{\Sigma_X}\bm{\Sigma}_{\bm{X}}^\tra
    +\int_{g(t)D}\bm{x}\bm{x}^\tra\nu_{\bm{X}}(\D\bm{x})\bigg)
\to
\bm{\Sigma_Z}\bm{\Sigma}_{\bm{Z}}^\tra,\quad\text{as }t\da 0;
\end{equation}
\item if $\alpha\in(1,2)$, we have $\bm{\Sigma_X}=\bm{0}$ and
\begin{equation}
\label{eq:jump-stable-limit}
t\nu_{\bm{X}}(\scrL_{\bm{v}}(g(t)))\to \nu_{\bm{Z}}(\scrL_{\bm{v}}(1)),
\quad\text{as }t\da 0,\quad\text{for any }\bm{v}\in\Sp^{d-1};
\end{equation}
    \item if $\alpha=1$ (attraction to Cauchy process), then~\eqref{eq:jump-stable-limit} holds,
\begin{equation}
\label{eq:Cauchy-limit}
G(t)^{-1}\bigg(\bm{\gamma_X} 
- \int_{D\setminus g(t)D}\bm{x}\nu(\D\bm{x})\bigg)\to \bm{\gamma_Z},
\quad\text{as }t\da 0,
\end{equation}
and, for any $\bm{v}\in\Sp^{d-1}$, such that $\langle\bm{v},\bm{X}\rangle$ has finite variation (i.e. $\int_{D}|\langle\bm{v},\bm{x}\rangle|\nu_{\bm{X}}(\D\bm{x})<\infty$) and $\nu_{\bm{Z}}(\scrL_{\bm{v}}(1))>0$, the process $\langle\bm{v},\bm{X}\rangle$ has zero natural drift: $\langle\bm{v},\bm{\gamma_X}\rangle=\int_{D}\langle\bm{v},\bm{x}\rangle\nu_{\bm{X}}(\D\bm{x})$.
\item if $\alpha\in(0,1)$, then~\eqref{eq:jump-stable-limit} holds, $\bm{X}$ has finite variation (i.e. $\int_{D}|\bm{x}|\nu_{\bm{X}}(\D\bm{x})<\infty$) and zero natural drift (i.e. $\bm{\gamma_X}=\int_{D}\bm{x}\nu_{\bm{X}}(\D\bm{x})$).
\end{itemize}
Moreover,  the function $g$ 
satisfying the weak limit above
is asymptotically unique at $0$: a positive  function $\wt g$ 
satisfies $(\bm{X}(st)/\wt g(t))_{s \in [0,1]}\cid (\bm{Z}(s))_{s\in [0,1]}$ as $t \da 0$
if and only if $\wt g(t)/g(t)\to 1$ as $t\da 0$.
\end{theorem}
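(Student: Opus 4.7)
The plan is to reduce the functional limit theorem to a limit theorem for time-$1$ marginals, then invoke the L\'evy--Khintchine convergence criterion for infinitely divisible distributions. The three ingredients I would assemble are: (i) for L\'evy processes on a compact time interval, Skorokhod convergence is equivalent to convergence in law of the time-$1$ marginals, provided the limit is itself a L\'evy process (Jacod--Shiryaev VII.3.6--VII.3.7); (ii) for sequences of infinitely divisible laws, convergence in distribution is equivalent to a triple convergence on their generating triplets (vague convergence of the L\'evy measure away from $\bm 0$, convergence of the truncated second-moment $\bm{\Sigma}\bm{\Sigma}^\tra+\int_D \bm{x}\bm{x}^\tra\nu(\D\bm{x})$, and convergence of the drift); (iii) the scaling characterisation of stable laws, which forces the weak limit to be $\alpha$-stable for some $\alpha\in(0,2]$ and forces $g$ to be regularly varying at $0$ with index $1/\alpha$.

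First, I would compute the triplet of $\bm{Y}_t\coloneqq (\bm{X}(st)/g(t))_{s\in[0,1]}$ with respect to the cutoff $\bm{1}_D$: the L\'evy measure is $\nu_t(A)=t\nu_{\bm{X}}(g(t)A)$, the Gaussian covariance is $(t/g(t)^2)\bm{\Sigma}_{\bm{X}}\bm{\Sigma}_{\bm{X}}^\tra$, and the drift contains $(t/g(t))\bm{\gamma}_{\bm{X}}$ together with a correction produced by the mismatch of $\bm{1}_D$ under scaling by $g(t)$. Feeding these into (ii) yields the three abstract convergences
\[
t\nu_{\bm{X}}(g(t)\,\cdot\,)\to \nu_{\bm{Z}},\quad (t/g(t)^2)\bm{\Sigma}_{\bm{X}}\bm{\Sigma}_{\bm{X}}^\tra+\int_D \bm{x}\bm{x}^\tra \nu_t(\D\bm{x})\to \bm{\Sigma}_{\bm{Z}}\bm{\Sigma}_{\bm{Z}}^\tra+\int_D \bm{x}\bm{x}^\tra\nu_{\bm{Z}}(\D\bm{x}),
\]
together with drift convergence. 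At this point the selfsimilarity of the limiting family (any two ratios $g(rt)/g(t)$ must admit a limit and form a multiplicative homomorphism on $(0,\infty)$) forces the existence of $\alpha\in(0,2]$ with $g(t)=t^{1/\alpha}G(t)$ for some $G\in\SV_0$, and the structure theorem for strictly stable laws then gives the radial form \eqref{eq:Levy-measure-stable} of $\nu_{\bm{Z}}$.

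Second, I would specialise the three abstract convergences to each regime to obtain the explicit statements in the theorem. For $\alpha=2$ the vague convergence forces $\nu_{\bm{Z}}\equiv 0$, while the truncated second-moment convergence, after substituting $\nu_t$ and scaling, becomes exactly \eqref{eq:Brownian-limit}. For $\alpha\in(1,2)$ testing $t\nu_{\bm{X}}(g(t)\,\cdot\,)$ against indicators of the half-spaces $\scrL_{\bm{v}}(1)$ recovers \eqref{eq:jump-stable-limit}, infinite-variation stable jumps together with Gaussian-covariance rescaling $t/g(t)^2\to 0$ force $\bm{\Sigma}_{\bm{X}}=\bm{0}$, and the drift condition is absorbed by zero mean of $\bm{Z}$. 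For $\alpha\in(0,1)$ I would use the finite-variation criterion $\int_D|\bm{x}|\nu_{\bm{X}}(\D\bm{x})<\infty$ together with the computation of the natural drift under the rescaling to deduce that $\bm{X}$ must itself have zero natural drift and finite variation. The $\alpha=1$ Cauchy case will require the most care: the inhomogeneity in the stable scaling of the drift forces the precise compensation \eqref{eq:Cauchy-limit} and the directional finite-variation identity $\langle\bm{v},\bm{\gamma}_{\bm{X}}\rangle=\int_D\langle\bm{v},\bm{x}\rangle\nu_{\bm{X}}(\D\bm{x})$ whenever $\langle\bm{v},\bm{X}\rangle$ is of finite variation. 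Finally, asymptotic uniqueness of $g$ follows by applying the convergence-of-types theorem to $\bm{X}(t)/g(t)$ and $\bm{X}(t)/\wt g(t)$, using that $\bm{Z}(1)$ is a genuinely $d$-dimensional non-degenerate $\alpha$-stable (or Gaussian) law.

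The main obstacle I expect is the $\alpha=1$ case: both the logarithmic-type drift correction produced by the mismatch of the cutoff $\bm{1}_D$ under rescaling and the identification of the directions in which $\langle\bm{v},\bm{X}\rangle$ must have vanishing natural drift require a careful passage through the centering, rather than a direct application of the triplet-convergence criterion. The other technical nuisance is verifying that no large-jump mass escapes to $\infty$ under the rescaling (tightness of $\nu_t$ on the complement of neighbourhoods of $\bm 0$); this can be handled by a concentration argument using that $\bm{Z}(1)$ is a genuine probability measure.
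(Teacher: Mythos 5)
Your proposal matches the paper's approach: the paper delegates the full argument to~\cite[App.~B]{WassersteinPaper}, which, as the paper notes, proves the theorem by combining the triplet-convergence criterion for Lévy processes (Kallenberg~\cite[Thm~15.14]{MR1876169} — the same content you cite from Jacod--Shiryaev) with the small-time DoA characterisation of~\cite[Thm~2]{MR3784492}, and your reduction to marginal triplet convergence plus the selfsimilarity/convergence-of-types argument is precisely that route. One slip in the $\alpha\in(1,2)$ case: the rescaled Gaussian covariance of $\bm{Y}_t=\bm{X}(t\,\cdot)/g(t)$ is $(t/g(t)^2)\,\bm{\Sigma}_{\bm{X}}\bm{\Sigma}_{\bm{X}}^\tra$ with $t/g(t)^2=t^{1-2/\alpha}/G(t)^2\to\infty$ (not $\to 0$) as $t\da 0$ since $1-2/\alpha<0$ and $G\in\SV_0$; it is this divergence that forces $\bm{\Sigma}_{\bm{X}}=\bm{0}$, whereas the limit you wrote would leave $\bm{\Sigma}_{\bm{X}}$ unconstrained. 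Your conclusion is right; only the stated reason needs the sign corrected.
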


Note that in the case $\alpha=2$ in Theorem~\ref{thm:small_time_domain_stable},
we may have $\bm{\Sigma_X}=\bm{0}$ (see~\cite[Ex.~5.7]{WassersteinPaper}), but in this case $G$ cannot be asymptotically equal to a positive constant. If $\alpha\in(1,2)$, the process $\bm{X}$ does not require centring since its mean is linear in time and thus disappears in the scaling limit. Finally, when the attractor is of finite variation (i.e. when $\alpha\in(0,1)$), the process $\bm{X}$ must have zero natural drift for the scaling limit to exist.

\end{document}